\theoremstyle{definition}
\newtheorem{definition}{Definition}
\theoremstyle{theorem}
\newtheorem{fact}{Fact}
\newtheorem{proposition}{Proposition}
\newtheorem{lemma}{Lemma}
\newtheorem{theorem}{Theorem}
\newtheorem{corollary}{Corollary}
\theoremstyle{definition}
\newcommand{\cf}{cf.\ }
\newcommand{\st}{s.t.\ }
\newcommand{\etc}{etc.\xspace}
\newcommand{\eg}{e.g.,\xspace}
\newcommand{\ie}{i.e.,\xspace}
\newcommand{\defeq}{\mathrel{:=}}
\newcommand{\defiff}{\mbox{:iff}}
\newcommand{\bnfeq}{\mathrel{::=}}
\newcommand{\bnfor}{\ \big| \ }
\newcommand{\set}[1]{\{#1\}}
\newcommand{\setst}[2]{\{\ #1\ \boldsymbol{|}\ #2\ \}}
\newcommand{\powerset}[1]{2^{#1}}
\newcommand{\nn}[1]{\makebox[2em][r]{#1}\ \ }
\newcommand{\true}{\text{true}}
\newcommand{\false}{\text{false}}
\newcommand{\agents}{\mathcal{A}}
\newcommand{\community}{\mathcal{C}}
\newcommand{\messages}{\mathcal{M}}
\newcommand{\data}{\mathcal{D}}
\newcommand{\pair}[2]{(#1,#2)}
\newcommand{\sign}[2]{{\{\negmedspace[#1]\negmedspace\}}_{#2}}
\newcommand{\states}{\mathcal{S}}
\newcommand{\gscc}[2]{\mathtt{succ}_{#1}^{#2}}
\newcommand{\msgs}[1]{\mathrm{msgs}_{#1}}
\newcommand{\preorder}[1]{\leq_{#1}}
\newcommand{\indist}[3]{#2\equiv_{#1}#3}
\newcommand{\pAccess}[3]{\mathrel{_{#1}\negthinspace\mathrm{R}_{#2}^{#3}}}
\newcommand{\access}[3]{\mathrel{_{#1}\negthinspace\mathcal{R}_{#2}^{#3}}}
\newcommand{\clo}[2]{\mathrm{cl}_{#1}^{#2}}
\newcommand{\Clo}[2]{\mathrm{Cl}_{#1}^{#2}}
\newcommand{\LiP}{\mathrm{LiP}}
\newcommand{\pFormulas}{\mathcal{L}}
\renewcommand{\true}{\top}
\renewcommand{\false}{\bot}
\newcommand{\relsym}[1]{\thinspace{#1}\thinspace}
\newcommand{\knows}[2]{#1\relsym{\mathsf{k}}#2}
\newcommand{\limp}{\rightarrow}
\newcommand{\lequiv}{\leftrightarrow}
\newcommand{\proves}[4]{#1\relsym{:_{#3}^{#4}}#2}
\newcommand{\decides}[4]{#1\relsym{{\veebar}_{#3}^{#4}}#2}
\newcommand{\aModalFrame}{\mathfrak{S}}
\newcommand{\LiPded}{\vdash_{\LiP}}
\renewcommand{\preorder}[1]{<_{#1}}
\renewcommand{\proves}[4]{#1\relsym{::_{#3}^{#4}}#2}
\newcommand{\LiiP}{\mathrm{LiiP}}
\newcommand{\LiiPded}{\vdash_{\LiiP}}
\newcommand{\pproves}[4]{#1\relsym{:_{#3}^{#4}}#2}
\newcommand{\LDiiP}{\mathrm{LDiiP}}
\newcommand{\LDiiPded}{\vdash_{\LDiiP}}
\newcommand{\LDiiPdedBis}{\mathrel{{\dashv}{\vdash}_{\LDiiP}}}
\begin{document}
\title{Logic of Negation-Complete Interactive Proofs\thanks{%
	Work partially funded with 
		Grant AFR~894328 from the National Research Fund Luxembourg  
			cofunded under the Marie-Curie Actions of the European Commission (FP7-COFUND)
			.}\\
		(Formal Theory of Epistemic Deciders)}
\author{Simon Kramer\\[\jot]
		\texttt{simon.kramer@a3.epfl.ch}}
\maketitle


\begin{abstract}
	We produce a decidable classical normal modal logic of 
		internalised 
			\emph{negation-complete} and thus \emph{disjunctive} 
				non-monotonic interactive proofs (LDiiP) from 
					an existing logical counterpart of 
						non-monotonic or instant interactive proofs (LiiP).
	LDiiP internalises agent-centric proof theories that 
		are 
			negation-complete (maximal) and 
			consistent (and hence strictly weaker than, for example, Peano Arithmetic) and
		enjoy the \emph{disjunction property} (like Intuitionistic Logic).
	In other words, 
		internalised proof theories are \emph{ultrafilters} and 
		all internalised proof goals are \emph{definite} in the sense of 
			being either provable or disprovable to an agent by means of disjunctive internalised proofs
				(thus also called \emph{epistemic deciders}).
	Still, LDiiP itself is 
		classical (monotonic, non-constructive), 
		negation-incomplete, and
		does not have the disjunction property.
	The price to pay for the negation completeness of our interactive proofs is their 
		non-monotonicity and 
		non-communality (for singleton agent communities only).
	As a normal modal logic, LDiiP enjoys a standard Kripke-semantics, which
		we justify by invoking the Axiom of Choice on LiiP's and 
	then construct in terms of a concrete oracle-computable function.
	LDiiP's agent-centric internalised notion of proof can also be viewed as 
		a \emph{negation-complete disjunctive explicit} refinement of standard KD45-belief, and
		yields a \emph{disjunctive} but negation-\emph{in}complete \emph{explicit} refinement of S4-provability.

	\bigskip
	
	\noindent
	\textbf{Keywords:} 
		agents as proof checkers, 
		constructive Kripke-semantics, 
		disjunctive explicit doxastic \& epistemic logic,
		epistemic deciders as decisive evidence, 
		interactive \& oracle computation,
		multi-agent systems, 
		negation as failure,  
		proofs as sufficient evidence, 
		proof terms as truth values.
\end{abstract}

\section{Introduction}
The subject matter of this paper is classical normal modal logic of non-monotonic interactive proofs, \ie 
	a \emph{novel} modal logic of \emph{negation-complete} and thus \emph{disjunctive} interactive proofs (LDiiP) and 
	an existing modal logic of 
		non-disjunctive and thus negation-incomplete interactive proofs (LiiP) 
			\cite{KramerICLA2013,LiiP}. 
(We abbreviate interactivity-related adjectives with lower-case letters.) 

Our goal here is to produce LDiiP axiomatically as well as semantically from LiiP.
Note that like in \cite{KramerICLA2013,LiiP,LiP}, 
	we still understand interactive \emph{proofs as sufficient evidence} for
		intended \emph{resource-unbounded} proof-checking agents (who are though unable to guess), and
		leave probabilistic and polynomial-time resource bounds for future work.

\subsection{Motivation}
Our immediate motivation for LDiiP is 
	first the theoretical concept and  
	second the practical application of 
a negation-complete variant of our interactive proofs \cite{KramerICLA2013,LiiP,LiP}.
The overarching motivation for LDiiP is 
	to serve in an intuitionistic foundation of interactive computation. 
See \cite{LiP} for a programmatic motivation.

\subsubsection{Theoretical concept}\label{section:TheoreticalConcept}
Like in the non-interactive setting of a single prover-verifier agent, 
	the motivation for negation-complete (maximal) and consistent logical theories 
		(or ultrafilters \cite{DaveyPriestley}) and their external and internalised notions of proof is 
			to gain cognitive, constructive, and computational content.

Recall 
	that a logical theory $\mathrm{T}$ is \emph{negation-complete} by definition if and only if 
		(written ``:iff'' hereafter)
		for all formulas $\phi$ in the language (say $\mathcal{L}$) of $\mathrm{T}$, 
			$\phi\in\mathrm{T}$ or $\neg\phi\in\mathrm{T}$, and
	that $\mathrm{T}$ is \emph{consistent} :iff 
		$\false\not\in\mathrm{T}$ (so $\mathrm{T}\neq\mathcal{L}$), where
			`$\neg$' designates negation (complementation) and
			$\false$ falsehood (bottom).
Notice that 
	each such logical theory  
		(a filter\footnote{A subset in a (logical) lattice is a \emph{filter} by definition if and only if
			it is closed under meet (conjunction) and the lattice ordering (implication) 
				\cite[Lindenbaum-Tarski algebra]{DaveyPriestley}.} of propositions)  
			is defined in terms of a characteristic property and thus independently of how it is generated 
				(\eg based on some proof system or satisfaction relation), and that 
	inconsistent theories are trivially negation-complete as well as classical.
Classic examples of 
	non-trivial negation-complete (first-order) theories (with equality, but without sets) are: 
		Tarski's fragment of Euclidean Geometry, 
		Presburger (natural-number) Arithmetic, and 
		elementary real-number arithmetic.
Given a recursive axiomatisation\footnote{
	I.e., $\mathrm{T}$ has an algorithmically decidable set of axioms.
		This is a minimal requirement for any practical logical theory;    
			it guarantees the recognizability of its axioms.} of and thus an \emph{external} notion of proof for $\mathrm{T}$, 
	negation completeness and 
	consistency corresponds to the meta-theorem schema 
		$\vdash_{\mathrm{T}}\phi$ or $\vdash_{\mathrm{T}}\neg\phi$ (NC) and 
		$\not\vdash_{\mathrm{T}}\false\,,$ respectively.
That is,
	for all $\phi\in\mathcal{L}$,
		$\phi$ or $\neg\phi$ is a theorem of $\mathrm{T}$, or, 
			model-theoretically speaking,	
				a validity, \ie a universal truth.
For negation-complete consistent \emph{modal} theories, 
	this incidentally means that 
		there is no local truth that is not also a global truth, and thus 
			the point of their modality (which is non-trivial local truth, \ie 
				truth in some but not all of their pointed models) is nullified.
(If $\vdash_{\mathrm{T}}\phi$ then $\phi$ is a universal and thus global truth;
	if $\not\vdash_{\mathrm{T}}\phi$ then $\vdash_{\mathrm{T}}\neg\phi$ by 
		the negation completeness of $\mathrm{T}$, and thus 
			$\neg\phi$ is a universal and thus global truth, and hence 
				$\phi$ cannot be a local truth by 
					the consistency of $\mathrm{T}$.)
So in some sense, negation-complete modal theories are trivial, even if they are consistent.
Fortunately here, our modal LDiiP is negation-\emph{in}complete.
It is only the notion of proof that LDiiP \emph{internalises} that is negation-complete.
Compared with LDiiP's internalised agent-centric notion of proof,
	negation completeness and 
	consistency corresponds to the axiom schema 
		$\LDiiPded(\decides{M}{\phi}{a}{})\lor(\decides{M}{\neg\phi}{a}{})$ and 
		$\LDiiPded\neg(\decides{M}{\false}{a}{})$, respectively, where
			$M$ designates a proof (message) and $a$ an intended proof-checking agent.
Notice how
	meta-logical negation and disjunction internalise as
		their object-logical counterparts.
Also, observe that 
	our internalisation is more concrete than its external counterpart in the sense that 
		the first speaks about a concrete (internalised) \emph{proof} (sufficient evidence) $M$ whereas 
		the latter only speaks about an abstract (external) prov\emph{ability} $\vdash_{\mathrm{T}}$.
Negation completeness means that 
	$M$ represents sufficient data (\eg a completion of the local system history recorded as a log file) for deciding  
		whether some statement (\eg about the current system state given by the global history) is true or false.
Hilbert hoped for a negation-complete consistent theory for the whole of mathematics, because, 
	in his word, there is no \emph{ignorabimus} in negation-complete consistent theories; 
	in some sense, they are cognitively ideal:
All (internalised) proof goals are \emph{definite} \cite{NotesOnSetTheory}, here in the sense that 
	their truth or falsehood can be determined unambiguously (and here even effectively by an agent) 
		by means of (internalised) proofs
			(thus also called \emph{epistemic deciders}).
Moreover, 
	negation-complete theories, though necessarily non-intuitionistic (!), nevertheless enjoy 
		the disjunction property of Intuitionistic Logic (IL),\footnote{See \cite{DisjunctionProperty} for
			a survey of other, so-called \emph{super-intuitionistic} or \emph{intermediate} logics strictly below classical propositional logic that
			also enjoy the disjunction property.} which is that 
			if $\vdash_{\mathrm{IL}}\phi\lor\phi'$ 
				then $\vdash_{\mathrm{IL}}\phi$ or $\vdash_{\mathrm{IL}}\phi'$ (DP) \cite{ConstructiveLogic}.
Thus they have considerable constructive content, and this even by 
	conserving the deductive convenience of the law of the excluded middle!
To see why negation-complete theories are necessarily classical, 
	suppose that there is a non-classical negation-complete theory $\mathrm{T}$ 
		(\ie $\not\vdash_{\mathrm{T}}\phi\lor\neg\phi$, and
				$\vdash_{\mathrm{T}}\phi$ or $\vdash_{\mathrm{T}}\neg\phi$) and
	derive an immediate contradiction therefrom by 
		considering the law of right and left $\lor$-introduction (set $\phi'\defeq\neg\phi$), which
			asserts that 
				if $\vdash_{\mathrm{T}}\phi$ or $\vdash_{\mathrm{T}}\phi'$ 
				then $\vdash_{\mathrm{T}}\phi\lor\phi'$ (and is also valid in IL).
In fact, 
	for classical logical theories, 
		negation completeness is classically equivalent to 
		the disjunction property.
This is a well-known result, which
	we recall here. 
\begin{theorem}\label{theorem:NCvsDP}
	For classical logical theories (filters in Boolean algebras or lattices), 
		negation completeness (maximality or being an ultrafilter) is classically equivalent to 
		the disjunction property (the property of being a prime filter).
\end{theorem}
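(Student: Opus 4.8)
The plan is to prove the two implications separately, working inside the Boolean algebra $B$ of propositions of which $\mathrm{T}$ is a proper (consistent) filter, and using only the defining closure properties of a filter recalled above: closure under meet ($\land$) and upward closure with respect to the lattice order (if $\phi\leq\psi$ and $\phi\in\mathrm{T}$, then $\psi\in\mathrm{T}$). I will read negation completeness in the internalised filter form ``$\phi\in\mathrm{T}$ or $\neg\phi\in\mathrm{T}$'' and the disjunction property (primeness) in the form ``$\phi\lor\phi'\in\mathrm{T}$ implies $\phi\in\mathrm{T}$ or $\phi'\in\mathrm{T}$.''

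First I would show that negation completeness implies the disjunction property. Assume $\phi\lor\phi'\in\mathrm{T}$ and $\phi\notin\mathrm{T}$; the goal is $\phi'\in\mathrm{T}$. By negation completeness, $\phi\notin\mathrm{T}$ forces $\neg\phi\in\mathrm{T}$, so closure under meet gives $(\phi\lor\phi')\land\neg\phi\in\mathrm{T}$. The Boolean distributivity and complementation laws then yield $(\phi\lor\phi')\land\neg\phi=(\phi\land\neg\phi)\lor(\phi'\land\neg\phi)=\false\lor(\phi'\land\neg\phi)=\phi'\land\neg\phi\leq\phi'$, and upward closure delivers $\phi'\in\mathrm{T}$.

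Conversely, for the disjunction property implies negation completeness, I would invoke the law of the excluded middle, available precisely because we work in a classical (Boolean) setting: for every $\phi$ we have $\phi\lor\neg\phi=\true$, and $\true\in\mathrm{T}$ since any nonempty filter contains the top element by upward closure. Applying primeness to $\phi\lor\neg\phi\in\mathrm{T}$ gives at once $\phi\in\mathrm{T}$ or $\neg\phi\in\mathrm{T}$, which is negation completeness.

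The one genuinely load-bearing step is the Boolean identity $(\phi\lor\phi')\land\neg\phi\leq\phi'$ used in the first direction: this is exactly where complementation, and hence the hypothesis that $B$ is a Boolean algebra rather than a merely distributive lattice or a Heyting algebra, is essential (prime and maximal filters already diverge in general distributive lattices, so the equivalence is genuinely Boolean). The converse direction is essentially free once excluded middle is granted. For completeness I would finally note that, under the usual convention, the improper filter $\mathrm{T}=\mathcal{L}$ --- the inconsistent theory, which is vacuously both prime and negation-complete --- is customarily excluded from the terms ``ultrafilter'' and ``prime filter,'' so that the substantive equivalence concerns proper, consistent $\mathrm{T}$, exactly as anticipated in the discussion preceding the statement.
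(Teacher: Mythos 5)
Your proof is correct and follows essentially the same route as the paper's: the paper likewise derives the disjunction property from negation completeness by disjunctive syllogism (your Boolean identity $(\phi\lor\phi')\land\neg\phi\leq\phi'$ is exactly the step the paper performs inferentially when it concludes $\vdash_{\mathrm{T}}\phi'$ from $\vdash_{\mathrm{T}}\neg\phi$ and $\vdash_{\mathrm{T}}\phi\lor\phi'$), and it obtains negation completeness from the disjunction property by instantiating $\phi'\defeq\neg\phi$ at the excluded-middle tautology, just as you do. The only difference is presentational: you work algebraically with filter closure properties inside the Boolean algebra, while the paper phrases the same two steps proof-theoretically in terms of $\vdash_{\mathrm{T}}$ (with your explicit handling of the improper filter and of where Booleanness is needed being a welcome, if inessential, addition).
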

\begin{proof}
	See Appendix~\ref{appendix:NCvsDP}.
\end{proof}
\noindent
Internalising negation-complete proof theories, 
	LDiiP thus internalises their disjunction property, as the theorem schema 
		$\LDiiPded(\decides{M}{(\phi\lor\phi')}{a}{})\limp((\decides{M}{\phi}{a}{})\lor(\decides{M}{\phi'}{a}{}))$,
			which is why we call our internalised proofs also \emph{disjunctive}.
Yet given  
	first, the classicality (and normality) of LDiiP, and 
	second, Theorem~\ref{theorem:NCvsDP}, which applies to the theories that LDiiP internalises, 
		we could as well have stipulated the internalised disjunction property as axiom schema and
		then derived the internalised negation completeness therefrom as theorem schema.
That is, in arbitrary classical normal modal logics, 
	we can make the following deduction, where
		the universal meta-quantification over $\phi$ and $\phi'$ in Line~1 is left implicit:
\begin{enumerate}
	\item$\vdash\Box(\phi\lor\phi')\limp(\Box\phi\lor\Box\phi')$\hfill assumed internalised disjunction property
	\item$\vdash\Box(\phi\lor\neg\phi)\limp(\Box\phi\lor\Box\neg\phi)$\hfill 
		1, particularisation (set $\phi'\defeq\neg\phi$)
	\item$\vdash\phi\lor\neg\phi$\hfill classical tautology
	\item$\vdash\Box(\phi\lor\neg\phi)$\hfill 3, necessitation (normality)
	\item$\vdash\Box\phi\lor\Box\neg\phi$\hfill 2, 4, \emph{modus ponens}. (internalised negation completeness)
\end{enumerate}
To see also the computational content in negation-complete consistent theories with a recursive axiomatisation as previously claimed, 
	recall from classical recursion theory \cite{LogicAndComplexity} that such theories 
		 are actually also recursive (algorithmically decidable) as a whole, \ie 
			not only in their set of axioms:
The recursiveness of the axioms of a theory implies
	the recursive enumerability of its theorems.
So in order to decide whether or not $\phi\in\mathrm{T}$ 
	for a given $\phi\in\mathcal{L}$ in the language $\mathcal{L}$ of such a theory $\mathrm{T}$,
			start the enumeration process of the members of $\mathrm{T}$.
				By the negation completeness of $\mathrm{T}$, either $\phi$ or $\neg\phi$ will pop up in the process.
				If $\phi$ pops up then stop, and 
					conclude that $\phi\in\mathrm{T}$;
				if $\neg\phi$ pops up then stop, and
					conclude that  
						$\phi\not\in\mathrm{T}$ by the consistency of $\mathrm{T}$.

In summary, 
	the cognitive, constructive, and computational content of 
		recursively axiomatised negation-complete consistent theories is distilled in their 
			maximal consistency, 
			disjunction property, and 
			algorithmic decidability, respectively.
However,
	their scope is far from the one of Hilbert's hope:
G\"odel ascertained the negation-\emph{in}completeness of 
	any recursively axiomatised consistent theory  
		containing the Peano-Arithmetic (PA) 
			part of mathematics \cite{LogicAndComplexity,IncompletenessSets}.\footnote{%
				Although 
					the natural numbers form a strict subset of the real numbers, 
						the negation-incomplete PA cannot be a subset of 
						the negation-complete elementary real-number arithmetic (R) mentioned earlier; 
							the natural numbers are not definable in the language of R \cite{FranzenI}.}
Worse, consistent theories containing PA are also algorithmically undecidable \cite{LogicAndComplexity}.
Notwithstanding,
		recursively axiomatised negation-complete consistent theories, which
			are thus strictly weaker than PA, 
				are crucial for practical applications.
(Maximally consistent sets are also crucial for theoretical applications such as
	the canonical-model construction for axiomatic completeness proofs, \cf Appendix~\ref{appendix:LDiiPCompleteness}.)

\subsubsection{Practical application}\label{section:PracticalApplication}
Both 
	the external as well as 
	the internalised form of negation completeness have important practical applications.
Important practical applications of 
	the external form ``\,$\vdash\phi$ or $\vdash\neg\phi$\,'' of negation completeness, which 
		have become classics in computer science and engineering, are 
	logic databases and programming.
There,
	the external form ``\,$\vdash\phi$ or $\vdash\neg\phi$\,'' classically corresponds to 
		the principle of \emph{negation as failure} ``\,$\not\vdash\phi$ implies $\vdash\neg\phi$\,'', \ie    
			$\neg\phi$ can be inferred if every possible proof of $\phi$ fails 
				\cite{NegationAsFailure,OnClosedWorldDatabases}.
Another important practical application of 
	a modal-logical variant 
		``\,$\not\vdash\mathsf{K}_{a}(\phi)$ implies $\vdash\neg\mathsf{K}_{a}(\phi)$\,'' of 
			negation as failure is artificial intelligence \cite{NonMonotonicKnowledge}, where
				$\mathsf{K}_{a}(\phi)$ reads as ``agent $a$ knows that $\phi$ (is true).''
There,
	this \emph{epistemic} variant of negation as failure produces a 
		\emph{non-monotonic} logic of knowledge for multi-agent distributed systems.
(This is also the only piece of related work that we are aware of.)
An important practical application of 
	our internalised form 
		$\LDiiPded(\decides{M}{\phi}{a}{})\lor\decides{M}{\neg\phi}{a}{}$ of negation completeness is 
		\emph{accountability} for dependable multi-agent distributed systems 
				(\eg electronic voting systems \cite{MMMSRTETEVVVS}, and,  
					more generally, the whole Internet \cite{AccountableInternet}).
A multi-agent distributed system $S$ is accountable by definition if and only if 
	$S$ is abuse-free and auditable \cite{MMFAAMAS}:
For all agents $b$ in $S$, 
		(\emph{abuse-freeness}), whenever $b$ behaves correctly (as an agent in $S$), 
			$b$ can prove to all agents $a$ (including to herself) in $S$ that she does so, and, 
		(\emph{auditability}), whenever $b$ behaves incorrectly (and thus is faulty), 
			every or at least one other agent $c$ in $S$ will eventually be able to prove to all agents $a$ in $S$ 
				(including to herself and $b$) that $b$ is faulty, 
					(\cf \cite{MMFAAMAS} for a formal transcription of this 
						natural-language formulation).
In such a system $S$, 
	each agent $b$'s behaviour in terms of 
		her past actions can be recorded in a \emph{log file} \cite{Logs} (say $M$) that 
			is broadcast; 
			and 
it is this log file $M$ that 
	must be constructed so as to have sufficient evidential strength to 
		constitute a negation-complete proof with respect to 
			the proof goal of $b$ behaving correctly (expressed with an atomic formula $\mathsf{correct}(b)$):
	$$(\decides{M}{\mathsf{correct}(b)}{a}{})\lor\decides{M}{\neg\,\mathsf{correct}(b)}{a}{}$$
In other words, $M$ must constitute 
	\emph{decisive evidence} or, in yet other words, be an \emph{epistemic decider} to $a$ about 
		the (ephemeral) issue of $b$'s correctness.
($b$ can change her behaviour!)
That is,
	\emph{LDiiP is a formal theory of epistemic deciders.}
For abuse-freeness (auditability), 
	the prover $b$ ($c$) must (eventually) know such an $M$, written $\knows{b}{M}$ ($\knows{c}{M}$).
We will present formal definitions in Section~\ref{section:LDiiP} and
	a full formal case study in future work (\cf \cite{MMFAAMAS} for a preliminary, 
		non-axiomatic accountability case study).
Finally, note that
	a piece of decisive evidence $M$ for $\mathsf{correct}(b)$ brought to the attention of a judge $a$ 
		can be viewed as a kind of \emph{forensic trace,} since 
			$M$ allows $a$ 
				to decide whether or not $b$ is correct and thus 
				to decide whether or not $b$ is guilty of behaving incorrectly.

\subsection{Contribution}\label{section:Contribution}
\paragraph{Conceptual contributions}
Our conceptual contributions in this paper are the following.
First, 
	we produce 
	a novel modal logic of negation-complete and thus disjunctive interactive proofs 
		(\cf Theorem~\ref{theorem:Adequacy}), which
		internalises agent-centric negation-complete consistent proof theories 
			(enjoying the disjunction property) and
				has important theoretical and practical applications.
Second, 
	we offer the insights that 
		the price to pay for negation completeness and disjunctiveness is 
			the non-monotonicity and non-communality of
				the resulting agent-centric notion of proof (\cf Fact~\ref{fact:NCiNM} and \ref{fact:NCiNC}, respectively), which turns out to be also 
					a negation-complete disjunctive explicit refinement of standard KD45-belief 
						(\cf Corollary~\ref{corollary:NCDEB}).
Third,
	we contribute 
		a disjunctive but negation-incomplete explicit refinement of S4-provability  
			(\cf Corollary~\ref{corollary:DEK}), constructed from 
				our notion of proof.

\paragraph{Technical contributions}
Our technical contributions are the following.
First, 
	we provide a standard but also oracle-computational and set-theoretically constructive Kripke-semantics for LDiiP  
		(\cf Section~\ref{section:Semantically}).
Like in \cite{KramerICLA2013,LiiP},
		we endow the proof modality with a standard Kripke-semantics \cite{ModalLogicSemanticPerspective}, but whose
			accessibility relation $\access{M}{a}{}$ we 
				first define constructively in terms of 
					elementary set-theoretic constructions,\footnote{in loose analogy with 
							the set-theoretically constructive rather than 
							the purely axiomatic definition of 
								numbers \cite{TheNumberSystems} of  
								ordered pairs (\eg the now standard definition by Kuratowski, 
									and other well-known definitions \cite{NotesOnSetTheory})} 
							namely as $\pAccess{M}{a}{}$,
				and then match to an abstract semantic interface in standard form (which 
					abstractly stipulates the characteristic properties of the accessibility relation
						\cite{ModalProofTheory}).
	We will say that $\pAccess{M}{a}{}$ \emph{exemplifies} 
		(or \emph{realises}) $\access{M}{a}{}$.
	(A simple example of a set-theoretically constructive but non-intuitionistic definition of a modal accessibility is
		the well-known definition of epistemic accessibility as 
			state indistinguishability defined in terms of  
				equality of state projections \cite{Epistemic_Logic}.)
	The Kripke-semantics for LDiiP is oracle-computational in the sense that
		(\cf Definition~\ref{definition:SemanticIngredients})
		the individual proof knowledge (say $M$) can be thought of as being provided by 
			an imaginary computation oracle, which thus acts as a 
				hypothetical provider and imaginary epistemic source of our interactive proofs.
Second, 
	we prove Theorem~\ref{theorem:ProofCompilability}, which 
		establishes 
			the proof-terms-as-truth-values view as well as  
			a normal form for the special case of a singleton agent universe.
Third,
	we prove 
		the finite-model property (\cf Theorem~\ref{theorem:FiniteModelProperty}) and  
		the algorithmic decidability of LDiiP (\cf  Corollary~\ref{corollary:AlgorithmicDecidability}).
(Negation completeness implies algorithmic decidability as seen in Section~\ref{section:TheoreticalConcept}, 
	but not vice versa as LDiiP testifies.)

\subsection{Roadmap}
In the next section, 
	we introduce our Logic of Disjunctive instant interactive Proofs (LDiiP) axiomatically by means of 
				a compact closure operator 
			that induces the Hilbert-style proof system that we seek. 
%
We then 
	gain the (syntactic) insight that negation completeness implies non-monotonicity 
		(\cf Fact~\ref{fact:NCiNM}), and 
	prove the above-mentioned  
		Theorem~\ref{theorem:ProofCompilability} as well as 
		Corollary~\ref{corollary:NCDEB} and \ref{corollary:DEK} within the obtained system.
Next, we 
	introduce the concretely constructed semantics 
	as well as the standard abstract semantic interface for LDiiP (\cf Section~\ref{section:Semantically}), and
	prove the axiomatic adequacy of the proof system with respect to this interface 
		(\cf Theorem~\ref{theorem:Adequacy}).
We justify the existence of the constructive semantics of LDiiP by 
	invoking the Axiom of Choice on LiiP's (\cf Table~\ref{table:LiiP}) and 
		then also construct it in terms of a concrete oracle-computable function, from which
			we gain the (semantic) insight that negation completeness implies non-communality 
				(\cf Fact~\ref{fact:NCiNC}).
Last but not least,
	we prove 
		the finite-model property 
			(\cf Theorem~\ref{theorem:FiniteModelProperty}) and, therefrom,  
		the algorithmic decidability 
			(\cf Corollary~\ref{corollary:AlgorithmicDecidability}) of LDiiP. 

\section{LDiiP}\label{section:LDiiP}
\subsection{Syntactically}
Like the Logic of instant interactive Proofs (LiiP), 
	the Logic of Disjunctive instant interactive Proofs (LDiiP) provides
		a modal \emph{formula language} over a generic message \emph{term language}.
The formula language of LDiiP offers
		the propositional constructors, 
		a relational symbol `$\knows{}{}$' for constructing atomic propositions about individual knowledge (\eg $\knows{a}{M}$), and
		a modal constructor `$\decides{}{}{a}{}$' for propositions about proofs
			(\eg $\decides{M}{\phi}{a}{}$).
In brief, LDiiP is a minimal extension of classical propositional logic with an interactively generalised
	additional operator (the proof modality) and 
	proof-term language.
Note, the language of LDiiP is identical to the one of LiiP \cite{KramerICLA2013,LiiP} 
	modulo the proof-modality notation, which
		in LiiP is `$\proves{}{}{a}{\community}$', where
			$a$ acts as proof checker, like in LDiiP, and 
			$\community$ as $a$'s peer group, unlike in LDiiP (non-communality).
\begin{definition}[The language of LDiiP]\label{definition:LDiiPLanguage}
	Let
	\begin{itemize}
		\item $\agents\neq\emptyset$ designate a non-empty finite set of 
			\emph{agent names} $a$, $b$, $c$, \etc 
		\item $\messages$ 
			designate 
			a language of \emph{message terms} $M$ such that $a\in\messages$ 
		\item $\mathcal{P}$ designate a denumerable set of \emph{propositional variables} $P$ constrained such that
			for all $a\in\agents$ and $M\in\messages$, 
				$(\knows{a}{M})\in\mathcal{P}$ (for ``$a$ knows $M$'') is 
					a distinguished variable, \ie 
						an \emph{atomic proposition}, (for \emph{individual} knowledge)
						
			(So, for $a\in\agents$, $\knows{a}{\cdot}$ is a unary relational symbol.)
		\item $\pFormulas\ni\phi \bnfeq P \bnfor 
				\neg\phi \bnfor 
				\phi\land\phi \bnfor 
				\colorbox[gray]{0.75}{$\decides{M}{\phi}{a}{}$}$ 
				designate our language of \emph{logical formulas} $\phi$, where
				$\decides{M}{\phi}{a}{}$ reads 
					``$M$ \emph{can disjunctively prove that} $\phi$ to $a$'' in the sense that 
					``$M$ \emph{can prove whether or not} $\phi$ (is true) to $a$.''
	\end{itemize}
\end{definition}
\noindent
Note the following macro-definitions: 
	$\true \defeq \decides{a}{\knows{a}{a}}{a}{}$, 
	$\false \defeq \neg \true$, 
	$\phi \lor \phi' \defeq \neg (\neg \phi \land \neg \phi')$,
	$\phi \limp \phi' \defeq \neg \phi \lor \phi'$, and 
	$\phi \lequiv \phi' \defeq (\phi \limp \phi') \land (\phi' \limp \phi)$. 

Then, LDiiP has the following axiom and deduction-rule schemas, where 
grey-shading indicates the remaining essential differences to LiiP \cite{KramerICLA2013,LiiP}.
\begin{definition}[The axioms and deduction rules of $\LDiiP$]\label{definition:AxiomsRules}
Let
	\begin{itemize}
		\item $\Gamma_{0}$ designate an adequate set of axioms for classical propositional logic
		\item \colorbox[gray]{0.75}{$\Gamma_{1}$ designate some appropriate set of axioms for $\knows{a}{M}$}
		\item $\Gamma_{2} \defeq \Gamma_{0} \cup \Gamma_{1} \cup \{$
			\begin{itemize}
				\item \colorbox[gray]{0.75}{$\decides{M}{\knows{a}{M}}{a}{}$%
				\quad(self-knowledge)}
				\item $(\decides{M}{(\phi\limp\phi')}{a}{})\limp
				((\decides{M}{\phi}{a}{})\limp\decides{M}{\phi'}{a}{})$%
				\quad(Kripke's law, K)
				\item $(\decides{M}{\phi}{a}{})\limp(\knows{a}{M}\limp\phi)$%
				\quad(epistemic truthfulness)
				\item \colorbox[gray]{0.75}{$\neg(\decides{M}{\false}{a}{})$\quad(proof consistency)}
				\item \colorbox[gray]{0.75}{$(\decides{M}{\phi}{a}{})\lor\decides{M}{\neg\phi}{a}{}$\quad(negation completeness)}
				\} 
			\end{itemize}
			designate the axiom schemas of LDiiP.
	\end{itemize}
	Then, $\colorbox[gray]{0.75}{$\LDiiP\defeq\Clo{}{}(\emptyset)$}\defeq\bigcup_{n\in\mathbb{N}}\Clo{}{n}(\emptyset)$, where 
		for all $\Gamma\subseteq\pFormulas$:
		\begin{eqnarray*}
					\Clo{}{0}(\Gamma) &\defeq& \Gamma_{2}\cup\Gamma\\
					\Clo{}{n+1}(\Gamma) &\defeq& 
						\begin{array}[t]{@{}l@{}}
							\Clo{}{n}(\Gamma)\ \cup\\
							\setst{\phi'}{\set{\phi,\phi\limp\phi'}\subseteq\Clo{}{n}(\Gamma)}\cup
								\quad\text{(\emph{modus ponens,} MP)}\\
							\setst{\decides{M}{\phi}{a}{}}{\phi\in\Clo{}{n}(\Gamma)}\cup
								\quad\text{(necessitation, N).}
						\end{array}
				\end{eqnarray*}
		We call $\LDiiP$ a \emph{base theory,} and
		$\Clo{}{}(\Gamma)$ an \emph{LDiiP-theory} for any $\Gamma\subseteq\pFormulas$.
\end{definition}
\noindent
Notice the 
	logical order of LDiiP, which like LiiP's is, 
		due to propositions about (proofs of) propositions, \emph{higher-order propositional}.
From LiiP \cite{KramerICLA2013,LiiP},  
	we recall the discussions of 
		Kripke's law (K), 
		the law of epistemic truthfulness, and 
		the law of necessitation (N):
The key to the validity of K is that 
	we understand interactive proofs as sufficient evidence for
		intended resource-unbounded proof-checking agents (who are though still unable to guess).
Clearly for such agents,
	if 
		$M$ is sufficient evidence for $\phi\limp\phi'$ and $\phi$
	then so is $M$ for $\phi'$.
Then, the significance of epistemic truthfulness to interactivity is that 
	in truly distributed multi-agent systems, 
		not all proofs are known by all agents, \ie 
			agents are not omniscient with respect to messages. 
Otherwise, why communicate with each other?
So there being a proof does not imply knowledge of that proof. 
When an agent $a$ does not know the proof and 
the agent cannot generate the proof \emph{ex nihilo} herself by guessing it, 
only communication from a peer, who thus acts as an oracle, can entail the knowledge of the proof with $a$. 
%
%
Next, the justification for N is that
	in interactive settings,
		validities, and thus \emph{a fortiori} tautologies 
			(in the strict sense of validities of the propositional fragment), 
				are in some sense trivialities \cite{LiP}.
To see why,
	recall that
		modal validities are true in \emph{all} pointed models 
			(\cf Definition~\ref{definition:TruthValidity}), and thus
			not worth being communicated from one point to another in a given model, \eg 
				by means of specific interactive proofs.
(Nothing is logically more embarrassing than  
	talking in tautologies.)
Therefore, 
	validities deserve \emph{arbitrary} proofs.
What is worth being communicated are
	truths weaker than validities, namely 
		local truths in the standard model-theoretic sense 
			(\cf Definition~\ref{definition:TruthValidity}), which
				may not hold universally.
Otherwise why communicate with each other?
We continue to discuss the remaining, new axioms and rules.
As mentioned,  
	the message language $\messages$ of LDiiP is generic, and
thus $\knows{a}{M}$ will require axioms that
	are appropriate to the term structure of the chosen $M\in\messages$
		(such as those required for LiiP \cite{KramerICLA2013,LiiP}).
The validity of the axiom schema of self-knowledge is justified by oracle computation: 
		``if $a$ were to receive $M$, \eg from an oracle,  
			then $a$ would know $M$'' (\cf Definition~\ref{definition:SemanticIngredients}).
(The law of self-knowledge is also valid in LiiP, where
	it corresponds to the theorem [but not axiom] schema $\proves{M}{\knows{a}{M}}{a}{\emptyset}$.)
The axiom schema of 
	proof consistency and 
	negation completeness internalises 
		(external theory) 
			consistency and negation completeness, respectively 
				(\cf Section~\ref{section:TheoreticalConcept}).
Observe that
	internalised negation completeness is defined independently of the proof-term structure ($M$ is abstract), just as 
		(external) negation completeness of a logical theory is defined independently of its possible proof-system structure. 
However,
	this abstract definition is 
		an indirect, structural constraint: 
after all, not any proof-system structure generates a negation-complete theory.
\begin{proposition}[Hilbert-style proof system]\label{proposition:Hilbert}
	Let 
		\begin{itemize}
			\item $\Phi\LDiiPded\phi$ \text{:iff} if $\Phi\subseteq\LDiiP$ then $\phi\in\LDiiP$ 
			\item $\phi\LDiiPdedBis\phi'$ \text{:iff} $\set{\phi}\LDiiPded\phi'$ and $\set{\phi'}\LDiiPded\phi$
			\item $\LDiiPded\phi$ \text{:iff} $\emptyset\LDiiPded\phi.$
		\end{itemize}
		In other words, ${\LDiiPded}\subseteq\powerset{\pFormulas}\times\pFormulas$ is a \emph{system of closure conditions} in the sense of 
				\cite[Definition~3.7.4]{PracticalFoundationsOfMathematics}.
		For example:
			\begin{enumerate}
				\item for all axioms $\phi\in\Gamma_{2}$, $\LDiiPded\phi$
				\item for \emph{modus ponens}, $\set{\phi,\phi\limp\phi'}\LDiiPded\phi'$
				\item for necessitation, $\set{\phi}\LDiiPded\decides{M}{\phi}{a}{}$.
			\end{enumerate}
		(In the space-saving, horizontal Hilbert-notation ``$\Phi\LDiiPded\phi$'', 
			$\Phi$ is not a set of hypotheses but a set of premises, \cf  
				\emph{modus ponens} and necessitation.)
	Then $\LDiiPded$ can be viewed as being defined by 
		a $\Clo{}{}$-induced Hilbert-style proof system.
	In fact 
			${\Clo{}{}}:\powerset{\pFormulas}\rightarrow\powerset{\pFormulas}$ is a \emph{standard consequence operator,} \ie
				a \emph{substitution-invariant compact closure operator.}
\end{proposition}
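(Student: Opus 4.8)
The plan is to treat the two assertions in turn. For the first --- that $\LDiiPded$ is the provability relation of the Hilbert system induced by $\Clo{}{}$ --- I would read $\LDiiPded$, as a subset of $\powerset{\pFormulas}\times\pFormulas$, as the system of closure conditions consisting of the instances $(\emptyset,\phi)$ with $\phi\in\Gamma_2$ (axioms), the instances $(\set{\phi,\phi\limp\phi'},\phi')$ (\emph{modus ponens}), and the instances $(\set{\phi},\decides{M}{\phi}{a}{})$ (necessitation); the three displayed examples exhibit exactly these. It then suffices to observe that, for each $\Gamma$, the set $\Clo{}{}(\Gamma)$ is the least superset of $\Gamma$ containing $\Gamma_2$ and closed under \emph{modus ponens} and necessitation, i.e.\ the least $\LDiiPded$-closed set extending $\Gamma$. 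This is the standard fact that the stagewise union $\bigcup_n\Clo{}{n}(\Gamma)$ computes the inductively defined least fixed point of a monotone rule operator, and it is what makes $\LDiiPded$ ``$\Clo{}{}$-induced''.

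The substantive work is the second assertion, that $\Clo{}{}\colon \powerset{\pFormulas} \to \powerset{\pFormulas}$ is a substitution-invariant compact closure operator. First I would record the stage monotonicity $\Clo{}{n}(\Gamma) \subseteq \Clo{}{n+1}(\Gamma)$, immediate from the definition, so that the stages form an increasing chain with union $\Clo{}{}(\Gamma)$. Extensivity is then read off from $\Gamma \subseteq \Gamma_2 \cup \Gamma = \Clo{}{0}(\Gamma) \subseteq \Clo{}{}(\Gamma)$. Monotonicity in the argument, $\Gamma \subseteq \Gamma'$ implies $\Clo{}{}(\Gamma) \subseteq \Clo{}{}(\Gamma')$, follows by a routine induction on $n$ showing $\Clo{}{n}(\Gamma) \subseteq \Clo{}{n}(\Gamma')$, since the base adds the same $\Gamma_2$ and the MP- and N-clauses are monotone in their input set.

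For idempotence I would prove the key lemma that $\Clo{}{}(\Gamma)$ is itself closed under the three generating clauses: it contains $\Gamma_2$, and if $\phi$ together with $\phi\limp\phi'$ (respectively $\phi$ alone) lie in $\Clo{}{}(\Gamma)$, then by the chain property they already lie in a common stage $\Clo{}{n}(\Gamma)$, whence $\phi'$ (respectively $\decides{M}{\phi}{a}{}$) lies in $\Clo{}{n+1}(\Gamma) \subseteq \Clo{}{}(\Gamma)$. Consequently $\Clo{}{0}(\Clo{}{}(\Gamma)) = \Clo{}{}(\Gamma)$ and every further stage stabilises, giving $\Clo{}{}(\Clo{}{}(\Gamma)) = \Clo{}{}(\Gamma)$; the reverse inclusion is extensivity. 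Compactness I would obtain from the finitary shape of the two rules: an induction on $n$ shows that whenever $\phi \in \Clo{}{n}(\Gamma)$ there is a finite $\Gamma_0 \subseteq \Gamma$ with $\phi \in \Clo{}{n}(\Gamma_0)$, because a derivation reaching stage $n$ is a finite tree whose leaves are axioms of $\Gamma_2$ or members of $\Gamma$, of which only finitely many occur; hence $\Clo{}{}(\Gamma) = \bigcup\{\Clo{}{}(\Gamma_0) : \Gamma_0 \subseteq \Gamma \text{ finite}\}$.

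Finally, substitution-invariance I would establish by another induction on $n$, using that each axiom schema in $\Gamma_2$ and each rule is presented schematically, so that applying a substitution $\sigma$ to an instance of a schema yields another instance of the same schema, while $\sigma$ commutes with $\limp$ and with the modality $\decides{}{}{a}{}$. I expect the main obstacle to sit precisely here, in pinning down the admissible substitutions. The atoms $\knows{a}{M}$ are distinguished propositional variables occurring essentially in the self-knowledge and epistemic-truthfulness schemas --- indeed, replacing $\knows{a}{M}$ by an arbitrary formula would turn epistemic truthfulness into a non-theorem --- so the relevant notion of substitution must act on the ordinary propositional variables while leaving the $\knows{a}{M}$ atoms (and the message terms and agent names inside the modality) structurally intact, and it requires the generic knowledge-axiom set $\Gamma_1$ to itself be given by substitution-closed schemas. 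Granting this reading of substitution, the inductive step is routine, and the three closure-operator laws together with compactness and substitution-invariance deliver the claim.
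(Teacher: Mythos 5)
Your proof is correct and follows essentially the same route as the paper, whose own proof merely asserts that the closure-operator properties ``can be verified by inspection of the inductive definition'' and that substitution invariance ``follows from our definitional use of axiom schemas''; your stage-by-stage verification of extensivity, monotonicity, idempotence, compactness, and invariance is exactly that inspection, carried out in full. Your further caveat that substitutions must act only on ordinary propositional variables while leaving the distinguished atoms $\knows{a}{M}$ structurally intact is a sharpening the paper glosses over, and it is the right reading, since substituting an arbitrary formula for $\knows{a}{M}$ would turn the self-knowledge and epistemic-truthfulness schemas into non-theorems.
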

\begin{proof}
	Like in \cite{LiP}.
	That a Hilbert-style proof system can be viewed as induced by 
		 a compact closure operator is well-known (\eg see \cite{WhatIsALogicalSystem});
	that $\Clo{}{}$ is indeed such an operator can be verified by 
		inspection of the inductive definition of $\Clo{}{}$; and
	substitution invariance follows from our definitional use of axiom \emph{schemas}.\footnote{%
		Alternatively to axiom schemas,
		we could have used 
			axioms together with an
			additional substitution-rule set
				$\setst{\sigma[\phi]}{\phi\in\Clo{}{n}(\Gamma)}$
		in the definiens of $\Clo{}{n+1}(\Gamma)$.}
\end{proof}

\begin{corollary}[Normality]\label{corollary:Normality}
		LDiiP is a normal modal logic.
\end{corollary}
\begin{proof}
	Jointly by  
		Kripke's law, 
		\emph{modus ponens},
		necessitation (these by definition), and
		substitution invariance (\cf Proposition~\ref{proposition:Hilbert}).
\end{proof}
Note that in LDiiP,
	an analog of the primitive LiiP-rule 
	$$\set{\knows{a}{M}\lequiv\knows{a}{M'}}\LiiPded
		(\proves{M'}{\phi}{a}{\community})\lequiv\proves{M}{\phi}{a}{\community}\quad
			\text{(see \cite{KramerICLA2013,LiiP})}$$
	would be invalid (because incompatible with negation completeness) and thus is not admitted in LDiiP.
\emph{A fortiori,}
	an analog of the stronger primitive LiP-rule
	$$\set{\knows{a}{M}\limp\knows{a}{M'}}\LiPded
		(\pproves{M'}{\phi}{a}{\community})\limp\pproves{M}{\phi}{a}{\community}\quad
			\text{(see \cite{KramerICLA2013,LiP})}$$
	by which proof monotonicity 
		$\LiPded(\pproves{M}{\phi}{a}{\community})\limp\pproves{\pair{M}{M'}}{\phi}{a}{\community}$ 
		under paired data $M'$ can be deduced,  
			would be invalid and thus is not admitted in LDiiP either.
We thus assert the following negative fact about our negation-complete proofs.
\begin{fact}\label{fact:NCiNM}
	\emph{Negation completeness implies non-monotonicity.}
\end{fact}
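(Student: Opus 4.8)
The plan is to read \emph{non-monotonicity} as the failure of the proof-monotonicity-under-paired-data schema that is derivable in LiP, \ie of $(\decides{M}{\phi}{a}{})\limp\decides{\pair{M}{M'}}{\phi}{a}{}$, and to show that this schema cannot coexist with negation completeness in a consistent logic. Concretely I would argue by reductio: assume that proof monotonicity is admissible in LDiiP and derive an internalised inconsistency, thereby contradicting the axiom of proof consistency. Since LDiiP is consistent, monotonicity must be rejected, and that rejection \emph{is} the assertion that negation-complete proofs are non-monotonic. The conceptual engine is that adding data to a message can add \emph{conflicting} content, whereas negation completeness forces every message to take a definite stance on every proposition; pairing a pro-$\phi$ proof with an anti-$\phi$ proof then overdetermines the combined message.

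First I would record that, as in LiP, monotonicity would transfer a proof from \emph{each} projection of a pair, since knowing $\pair{M_1}{M_2}$ entails knowing $M_1$ and knowing $M_2$; hence both $(\decides{M_1}{\psi}{a}{})\limp\decides{\pair{M_1}{M_2}}{\psi}{a}{}$ and $(\decides{M_2}{\psi}{a}{})\limp\decides{\pair{M_1}{M_2}}{\psi}{a}{}$ would be available. Next I would manufacture two component proofs deciding one and the same proposition with opposite polarity. Taking $\phi\defeq\knows{a}{M_2}$, self-knowledge already yields $\decides{M_2}{\knows{a}{M_2}}{a}{}$. For the opposite polarity I would work inside a consistent LDiiP-theory (a completion, \ie an ultrafilter in the sense of Theorem~\ref{theorem:NCvsDP}) that contains $\knows{a}{M_1}$ but not $\knows{a}{M_2}$: there negation completeness gives $(\decides{M_1}{\knows{a}{M_2}}{a}{})\lor\decides{M_1}{\neg\knows{a}{M_2}}{a}{}$, while epistemic truthfulness kills the left disjunct (it would force $\knows{a}{M_2}$ out of $\knows{a}{M_1}$), leaving $\decides{M_1}{\neg\knows{a}{M_2}}{a}{}$. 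Monotonicity would then push both decisions onto the single pair $\pair{M_1}{M_2}$, and Kripke's law (reading $\neg\phi$ as $\phi\limp\false$) would collapse them to $\decides{\pair{M_1}{M_2}}{\false}{a}{}$, contradicting proof consistency $\neg(\decides{\pair{M_1}{M_2}}{\false}{a}{})$.

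The hard part, and the step I would treat most carefully, is producing the \emph{genuinely conflicting} witness: negation completeness only ever delivers a disjunction $(\decides{M}{\phi}{a}{})\lor\decides{M}{\neg\phi}{a}{}$, so the argument is vacuous unless one can actually resolve the two disjuncts oppositely. I would secure this by exhibiting a consistent configuration in which $a$ knows $M_1$ but is not forced to know $M_2$ — precisely the message-non-omniscience that motivates epistemic truthfulness — either directly as a maximally consistent LDiiP-theory or, once the Kripke-semantics of Section~\ref{section:Semantically} is in hand, as a pointed model, with soundness converting the semantic failure of monotonicity into its non-derivability. A secondary subtlety is that, because the congruence rule of LiiP is itself not admitted in LDiiP, I may not appeal to commutativity of pairing; I would therefore phrase monotonicity strictly as transfer from each projection of $\pair{M_1}{M_2}$ rather than rearranging its components.
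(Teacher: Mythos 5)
Your proposal is correct in substance, but note first how it relates to the paper: the paper offers \emph{no} proof of Fact~\ref{fact:NCiNM} at all. The Fact is simply asserted (``We thus assert the following negative fact\dots'') on the strength of the preceding remark that analogues of LiiP's congruence rule and LiP's epistemic-antitonicity rule ``would be invalid (because incompatible with negation completeness)'', and the schema $\not\LDiiPded(\decides{M}{\phi}{a}{})\limp\decides{\pair{M}{M'}}{\phi}{a}{}$ is displayed only as what the Fact \emph{would mean} if a pairing constructor were added to $\messages$. So what you have written is the missing argument rather than a variant of an existing one, and your reductio is essentially the right formalisation of the paper's parenthetical incompatibility claim: self-knowledge yields $\decides{M_2}{\knows{a}{M_2}}{a}{}$; in a maximally consistent theory containing $\knows{a}{M_1}$ but not $\knows{a}{M_2}$, the negation-completeness axiom plus epistemic truthfulness force $\decides{M_1}{\neg\knows{a}{M_2}}{a}{}$; and the two transfers onto $\pair{M_1}{M_2}$ collapse, via Fact~\ref{fact:ProofConsistency}.3 and K, to $\decides{\pair{M_1}{M_2}}{\false}{a}{}$, contradicting proof consistency. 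You are also right that the load-bearing step is the partial-knowledge configuration: since $\Gamma_1$ is generic, the whole argument is conditional on $\not\LDiiPded\knows{a}{M_1}\limp\knows{a}{M_2}$ for some $M_1,M_2$ (message non-omniscience), and the concrete semantics (Definition~\ref{definition:SemanticIngredients} with $\clo{a}{}$ as in Table~\ref{table:LiiP}) supplies exactly this, e.g.\ $M_1\defeq a$ and $M_2\defeq b$ at state $\mathtt{0}$.

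One refinement is needed in the syntactic branch. As you set it up, the reductio refutes the \emph{conjunction} of the two projection-transfer schemas; it shows they cannot jointly be admissible, but not that the paper's one-sided schema $(\decides{M}{\phi}{a}{})\limp\decides{\pair{M}{M'}}{\phi}{a}{}$ fails by itself, and --- as you correctly observe --- without LiiP's congruence rule you cannot commute the pair to reduce one projection to the other. Your semantic fallback is therefore not a mere convenience but the route that actually establishes the displayed non-derivability: at a state $s$ where $a$ knows $M_1$ but not $M_2$, conditional reflexivity together with functionality (the semantic counterpart of negation completeness) makes $\decides{M_1}{\neg\knows{a}{M_2}}{a}{}$ \emph{true} at $s$, while seriality, the epistemic-image property, and unpairing in $\clo{a}{}$ make $\decides{\pair{M_1}{M_2}}{\neg\knows{a}{M_2}}{a}{}$ \emph{false} at $s$; soundness (Theorem~\ref{theorem:Adequacy}.1) then gives the paper's claim from the single instance $\phi\defeq\neg\knows{a}{M_2}$. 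This countermodel also exposes the mechanism behind the Fact: functionality forces every message to take a definite stance on every proposition, and receiving more data can flip that stance --- which is precisely non-monotonicity.
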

\noindent
Note that 
	if we introduced a pairing constructor for proof terms into the message language $\messages$ of LDiiP 
	(as with LiiP, \cf Table~\ref{table:LiiP}),
	Fact~\ref{fact:NCiNM} would mean that 
$$\not\LDiiPded(\decides{M}{\phi}{a}{})\limp\decides{\pair{M}{M'}}{\phi}{a}{}\,.$$

\begin{fact}\label{fact:ProofConsistency}\ 
\begin{enumerate}
	\item $\set{\phi\limp\phi'}\LDiiPded(\decides{M}{\phi}{a}{})\limp\decides{M}{\phi'}{a}{}$\quad(regularity)
	\item $\LDiiPded\neg(\decides{M}{\false}{a}{})\lequiv
		((\decides{M}{\phi}{a}{})\limp\neg(\decides{M}{\neg\phi}{a}{}))$
	\item $\LDiiPded(\decides{M}{\neg\phi}{a}{})\lequiv
			\decides{M}{(\phi\limp\false)}{a}{}$
\end{enumerate}
\end{fact}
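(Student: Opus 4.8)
The plan is to treat all three items as routine consequences of LDiiP being a normal modal logic (Corollary~\ref{corollary:Normality}), so that necessitation (N), Kripke's law (K), and hence the derived combinations thereof are available, together with the proof-consistency axiom and classical propositional reasoning supplied by $\Gamma_{0}$. For item~1 (regularity) I would run the standard normal-modal derivation: from the premise $\phi\limp\phi'$, necessitation gives $\decides{M}{(\phi\limp\phi')}{a}{}$; the relevant instance of K is $(\decides{M}{(\phi\limp\phi')}{a}{})\limp((\decides{M}{\phi}{a}{})\limp\decides{M}{\phi'}{a}{})$; and a single application of \emph{modus ponens} then yields $(\decides{M}{\phi}{a}{})\limp\decides{M}{\phi'}{a}{}$, as required.

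For item~3 I would use item~1 (equivalently, the ``replacement of provably equivalent arguments under a normal modality'' that it packages) applied to the classical tautology $\neg\phi\lequiv(\phi\limp\false)$, which holds under the macro-definitions of $\limp$ and $\false$ from Definition~\ref{definition:LDiiPLanguage}. Concretely, necessitation on $\neg\phi\limp(\phi\limp\false)$ together with K gives $(\decides{M}{\neg\phi}{a}{})\limp\decides{M}{(\phi\limp\false)}{a}{}$, and the symmetric derivation starting from $(\phi\limp\false)\limp\neg\phi$ gives the converse implication; conjoining the two yields the stated biconditional.

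Item~2 is the only one with genuine, if modest, content, and is where I would spend care. The key observation is that \emph{both} sides of the biconditional are in fact theorems, so the biconditional follows by propositional logic, a biconditional of two theorems being itself a theorem. The left-hand side $\neg(\decides{M}{\false}{a}{})$ is literally the proof-consistency axiom. For the right-hand side I would establish $\LDiiPded(\decides{M}{\phi}{a}{})\limp\neg(\decides{M}{\neg\phi}{a}{})$ as follows: necessitation on the tautology $\phi\limp(\neg\phi\limp\false)$ followed by two applications of K gives $(\decides{M}{\phi}{a}{})\limp((\decides{M}{\neg\phi}{a}{})\limp\decides{M}{\false}{a}{})$; combining this with the proof-consistency axiom $\neg(\decides{M}{\false}{a}{})$ by propositional reasoning yields $(\decides{M}{\phi}{a}{})\limp\neg(\decides{M}{\neg\phi}{a}{})$. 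The converse half of the biconditional is then immediate, since its consequent $\neg(\decides{M}{\false}{a}{})$ is already an axiom.

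I do not expect a real obstacle here, as the whole fact is a packaging of normality plus proof consistency. The two points deserving attention are, first, recognising that item~2 reduces to showing that both sides are theorems rather than deriving one modal formula from the other directly; and second, checking that the propositional tautologies invoked, namely $\phi\limp(\neg\phi\limp\false)$ and $\neg\phi\lequiv(\phi\limp\false)$, are genuinely available once the macro-definitions of $\false$, $\lor$, and $\limp$ are unfolded, so that the uses of N and K are legitimate.
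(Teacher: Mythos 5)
Your proposal is correct, and for items~1 and~3 it coincides with the paper's own proof: item~1 is the standard N--K--\emph{modus ponens} derivation that the paper invokes as ``well-known for necessity modalities in arbitrary normal modal logics,'' and item~3 is obtained, exactly as in the paper, by applying regularity in both directions to the tautology $\neg\phi\lequiv(\phi\limp\false)$. Where you genuinely diverge is item~2. The paper treats it, like item~1, as a theorem of \emph{every} normal modal logic: the equivalence $\neg(\decides{M}{\false}{a}{})\lequiv((\decides{M}{\phi}{a}{})\limp\neg(\decides{M}{\neg\phi}{a}{}))$ is provable from K and necessitation alone, without the proof-consistency (D) axiom, which is precisely what makes the fact informative---it exhibits two axiom-independent formulations of consistency, so that assuming either one as an axiom yields the other. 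Your proof instead \emph{uses} the proof-consistency axiom to show that each side is separately a theorem of LDiiP and then concludes the biconditional by the propositional principle that a biconditional of two theorems is a theorem. This is perfectly sound in LDiiP (and there is no circularity: proof consistency is an axiom, not something derived from Fact~\ref{fact:ProofConsistency}.2), and it is arguably shorter; the price is generality and content---your argument would not establish item~2 in a normal modal logic lacking D, and within LDiiP it renders the equivalence trivial in the sense that the same argument would equate the proof-consistency axiom with any other theorem whatsoever. One point in your favour: you are right to flag that $\neg\phi\lequiv(\phi\limp\false)$ is not a propositional tautology on the nose, since $\false\defeq\neg\true$ with $\true\defeq\decides{a}{\knows{a}{a}}{a}{}$ a specific modal formula; its derivability rests on $\true$ being an instance of the self-knowledge axiom, a detail the paper itself glosses over by calling it ``a classical tautology.''
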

\begin{proof}
	1 and 2 are well-known for necessity modalities in arbitrary normal modal logics.
	For 3, consider 
		that $\LDiiPded\neg\phi\lequiv(\phi\limp\false)$ 
			since $\neg\phi\lequiv(\phi\limp\false)$ is a classical tautology, and
		then deduce the conclusion by 1.
\end{proof}

\begin{lemma}\label{lemma:PRD}\ 
	\begin{enumerate} 
		\item $\LDiiPded\decides{M}{((\decides{M}{\phi}{a}{})\limp\phi)}{a}{}$\quad(self-proof of truthfulness)
		\item $\LDiiPded(\decides{M}{(\decides{M}{\phi}{a}{})}{a}{})\limp\decides{M}{\phi}{a}{}$\quad(proof density)
	\end{enumerate}
\end{lemma}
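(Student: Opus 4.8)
The plan is to prove part~1 (self-proof of truthfulness) directly from the axioms, and then obtain part~2 (proof density) as a one-line consequence of part~1 via Kripke's law.

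For part~1, the key observation is that the axiom of epistemic truthfulness $(\decides{M}{\phi}{a}{})\limp(\knows{a}{M}\limp\phi)$ is classically equivalent to its re-curried form $\knows{a}{M}\limp((\decides{M}{\phi}{a}{})\limp\phi)$, since both amount propositionally to $\neg(\decides{M}{\phi}{a}{})\lor\neg\knows{a}{M}\lor\phi$. First I would establish this reshuffled theorem $\LDiiPded\knows{a}{M}\limp((\decides{M}{\phi}{a}{})\limp\phi)$ by classical propositional reasoning (using $\Gamma_{0}$). Then I would apply necessitation (N) to internalise it under the modality, giving $\LDiiPded\decides{M}{(\knows{a}{M}\limp((\decides{M}{\phi}{a}{})\limp\phi))}{a}{}$. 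Next, one instance of Kripke's law (K) peels off the outer implication, yielding $\LDiiPded(\decides{M}{\knows{a}{M}}{a}{})\limp\decides{M}{((\decides{M}{\phi}{a}{})\limp\phi)}{a}{}$. Finally, discharging the antecedent $\decides{M}{\knows{a}{M}}{a}{}$ by the axiom of self-knowledge and \emph{modus ponens} delivers the claim of part~1.

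For part~2, I would simply feed part~1 into Kripke's law. Instantiating K with antecedent formula $\decides{M}{\phi}{a}{}$ and consequent formula $\phi$ gives $(\decides{M}{((\decides{M}{\phi}{a}{})\limp\phi)}{a}{})\limp((\decides{M}{(\decides{M}{\phi}{a}{})}{a}{})\limp\decides{M}{\phi}{a}{})$. Its antecedent is exactly the theorem just proved in part~1, so \emph{modus ponens} yields $(\decides{M}{(\decides{M}{\phi}{a}{})}{a}{})\limp\decides{M}{\phi}{a}{}$, which is the desired statement.

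The main obstacle — indeed the only non-routine step — is spotting the classical re-currying of epistemic truthfulness in part~1. That move is what lets the non-omniscient hypothesis $\knows{a}{M}$ migrate into antecedent position, where it can be internalised by necessitation and then discharged precisely by the self-knowledge axiom $\decides{M}{\knows{a}{M}}{a}{}$; without it one cannot extract $\phi$ from epistemic truthfulness alone, since agents need not know $M$. Everything downstream is standard normal-modal bookkeeping (N, K, MP), and part~2 is a direct corollary of part~1.
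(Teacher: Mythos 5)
Your proof is correct and follows essentially the same route as the paper: re-curry epistemic truthfulness propositionally, internalise the result under the modality, discharge the antecedent with the self-knowledge axiom, and then obtain proof density from part~1 via one instance of K and \emph{modus ponens}. The only cosmetic difference is that the paper packages your necessitation-plus-K step as an application of the derived regularity rule (Fact~\ref{fact:ProofConsistency}.1), which is exactly the same inference unfolded.
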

\begin{proof}
	See Appendix~\ref{appendix:ProofLemma} 
\end{proof}
\noindent
The laws of self-proof of truthfulness and proof density also hold in LiiP \cite{KramerICLA2013,LiiP}.
We continue to present the first important result about LDiiP.
\begin{theorem}[Proof terms as Truth values]\label{theorem:ProofCompilability}\ 
	\begin{enumerate}
		\item $\LDiiPded(\decides{M}{\neg\phi}{a}{})\lequiv
				\neg(\decides{M}{\phi}{a}{})$\quad(maximal consistency)
		\item $\LDiiPded(\decides{M}{(\phi\land\phi')}{a}{})\lequiv((\decides{M}{\phi}{a}{})\land\decides{M}{\phi'}{a}{})$\quad(proof conjunctions \emph{bis})
		\item $\LDiiPded(\decides{M}{(\phi\lor\phi')}{a}{})\lequiv
				((\decides{M}{\phi}{a}{})\lor\decides{M}{\phi'}{a}{})$\quad(IDP \emph{bis})
		\item $\LDiiPded(\decides{M}{(\phi\limp\phi')}{a}{})\lequiv
				((\decides{M}{\phi}{a}{})\limp\decides{M}{\phi'}{a}{})$\quad(K \emph{bis})
		\item $\LDiiPded(\decides{M}{(\phi\lequiv\phi')}{a}{})\lequiv
				((\decides{M}{\phi}{a}{})\lequiv\decides{M}{\phi'}{a}{})$\quad(Bi-K)				
		\item $\LDiiPded(\decides{M}{(\decides{M}{\phi}{a}{})}{a}{})\lequiv\decides{M}{\phi}{a}{}$\quad(modal idempotency)
		\item $\LDiiPded\knows{b}{M}\limp
				((\decides{M}{(\decides{M}{\phi}{a}{})}{b}{})\lequiv\decides{M}{\phi}{a}{})$\quad
					(modal idempotency \emph{bis}) 
	\end{enumerate}
\end{theorem}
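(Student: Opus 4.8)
The plan is to prove the seven items in the stated order, since each later item feeds on the earlier ones, and the whole theorem rests on item~1, which turns the proof modality into a genuine two-valued truth assignment (whence ``proof terms as truth values''). Throughout I would use that LDiiP is normal (Corollary~\ref{corollary:Normality}), so regularity (Fact~\ref{fact:ProofConsistency}.1), and applying it in both directions as a congruence rule, is available. For item~1, the direction $(\decides{M}{\neg\phi}{a}{})\limp\neg(\decides{M}{\phi}{a}{})$ is immediate from Fact~\ref{fact:ProofConsistency}.2 once the proof-consistency axiom $\neg(\decides{M}{\false}{a}{})$ discharges the left side of that biconditional, yielding $(\decides{M}{\phi}{a}{})\limp\neg(\decides{M}{\neg\phi}{a}{})$, which I contrapose; the converse $\neg(\decides{M}{\phi}{a}{})\limp\decides{M}{\neg\phi}{a}{}$ is just the negation-completeness axiom read classically. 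Item~2 is the standard fact that a necessity modality in a normal logic commutes with conjunction: forward by regularity on the two projections of $\phi\land\phi'$, backward by necessitating the tautology $\phi\limp(\phi'\limp\phi\land\phi')$ and applying K twice.

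Item~3 is the genuinely disjunctive step that separates LDiiP from an arbitrary normal logic. Its backward half is routine (regularity on $\phi\limp\phi\lor\phi'$ and $\phi'\limp\phi\lor\phi'$, then $\lor$-elimination). For the forward half I would argue classically: assuming $\decides{M}{(\phi\lor\phi')}{a}{}$ and $\neg(\decides{M}{\phi}{a}{})$, item~1 gives $\decides{M}{\neg\phi}{a}{}$, item~2 conjoins this with the hypothesis into $\decides{M}{((\phi\lor\phi')\land\neg\phi)}{a}{}$, and regularity on the tautology $(\phi\lor\phi')\land\neg\phi\limp\phi'$ delivers $\decides{M}{\phi'}{a}{}$. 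Item~4 then falls out by unfolding the macro $\phi\limp\phi'$ as $\neg\phi\lor\phi'$ and chaining equivalences: $\decides{M}{(\neg\phi\lor\phi')}{a}{}$ is equivalent by item~3 to $(\decides{M}{\neg\phi}{a}{})\lor\decides{M}{\phi'}{a}{}$, hence by item~1 to $\neg(\decides{M}{\phi}{a}{})\lor\decides{M}{\phi'}{a}{}$, i.e.\ to $(\decides{M}{\phi}{a}{})\limp\decides{M}{\phi'}{a}{}$ (the forward half being also just Kripke's law K). Item~5 is then propositional packaging: unfold $\lequiv$ as a conjunction of implications and apply items~2 and~4.

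Items~6 and~7 are the introspection laws. For item~6 the forward direction is exactly proof density (Lemma~\ref{lemma:PRD}.2). For the backward direction I would take the contrapositive: by item~1 applied to the formula $\decides{M}{\phi}{a}{}$ it suffices to show $(\decides{M}{(\neg\decides{M}{\phi}{a}{})}{a}{})\limp\neg(\decides{M}{\phi}{a}{})$; congruence together with item~1 rewrites the antecedent to $\decides{M}{(\decides{M}{\neg\phi}{a}{})}{a}{}$, proof density on $\neg\phi$ collapses this to $\decides{M}{\neg\phi}{a}{}$, and item~1 again turns that into $\neg(\decides{M}{\phi}{a}{})$. No $\knows{a}{M}$ hypothesis is needed here, precisely because density is available for the coincident agent. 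For the cross-agent item~7 such a density law is unavailable, so the $\knows{b}{M}$ antecedent does the work: its forward half is epistemic truthfulness for $b$ applied to $\decides{M}{\phi}{a}{}$, and for the backward half I run negation completeness for $b$ on $\decides{M}{\phi}{a}{}$ --- in the unwanted disjunct $\decides{M}{(\neg\decides{M}{\phi}{a}{})}{b}{}$, epistemic truthfulness for $b$ together with $\knows{b}{M}$ forces $\neg(\decides{M}{\phi}{a}{})$, contradicting the antecedent $\decides{M}{\phi}{a}{}$, so the wanted disjunct remains.

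The main obstacle is conceptual rather than computational: it is item~1, and through it items~3 and~4. The converse-of-K direction of item~4 and the internalised disjunction property of item~3 are both \emph{invalid} in a generic normal modal logic; here they hold only because maximal consistency (item~1) --- itself a direct consequence of the proof-consistency and negation-completeness axioms --- lets me replace $\neg(\decides{M}{\phi}{a}{})$ by $\decides{M}{\neg\phi}{a}{}$ and thereby push classical negation through the modality. Once that single equivalence is secured, every remaining item is bookkeeping in classical propositional logic together with normal-modal congruence, K, proof density, and epistemic truthfulness.
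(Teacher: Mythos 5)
Your proposal is correct, and its skeleton is the paper's own: item~1 from proof consistency plus negation completeness (via Fact~\ref{fact:ProofConsistency}.2 and contraposition), item~2 by regularity and K, items~4--5 by macro-unfolding and chaining, item~6 from proof density on $\neg\phi$ plus item-1 rewriting, item~7 from epistemic truthfulness for $b$. The only genuine deviations are local, in the forward half of item~3 and the backward half of item~7. For item~3 the paper never does a case analysis: it expands $\phi\lor\phi'$ as the macro $\neg(\neg\phi\land\neg\phi')$ and chains biconditionals, applying item~1 three times and item~2 once, so both directions of IDP \emph{bis} drop out of a single equational computation; you instead prove the backward half by regularity and the forward half by the classical deduction ``assume $\decides{M}{(\phi\lor\phi')}{a}{}$ and $\neg(\decides{M}{\phi}{a}{})$, conjoin inside the modality, cut with the tautology $((\phi\lor\phi')\land\neg\phi)\limp\phi'$.'' Both are sound (your hypothetical reasoning uses only \emph{modus ponens} on theorems and hypotheses, never necessitation on a hypothesis, so it discharges propositionally); the paper's rewriting style is closer to a mechanical Hilbert derivation, while yours makes visible \emph{why} maximal consistency forces primeness. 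Similarly, for item~7 the paper reaches $\knows{b}{M}\limp((\decides{M}{\phi}{a}{})\limp\decides{M}{(\decides{M}{\phi}{a}{})}{b}{})$ by applying epistemic truthfulness to the $\neg\phi$-instance and then rewriting with item~1 for checker $b$, whereas you invoke the negation-completeness axiom for $b$ directly and refute the unwanted disjunct; these are interderivable ways of packaging the same two axioms, and nothing is gained or lost either way.
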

\begin{proof}
	See Appendix~\ref{appendix:ProofTermsAsTruthValues} 
\end{proof}
\noindent
``IDP'' abbreviates ``Internalised Disjunction Property.''
The laws are enumerated in a (total) order that respects their respective proof prerequisites.
Notice that 
	Theorem~\ref{theorem:ProofCompilability}.2--\ref{theorem:ProofCompilability}.5 are \emph{modal distributivity} laws.
They assert that the proof modality of LDiiP is fully distributive over (binary) Boolean operators.
While the laws of proof conjunction \emph{bis} and modal idempotency also hold in LiiP \cite{KramerICLA2013,LiiP}, 
only the if-direction of the laws IDP \emph{bis} and K \emph{bis} hold in LiiP.
Notice also that 
	modal idempotency combines proof density (\cf Lemma~\ref{lemma:PRD}.2) and 
	proof transitivity (\cf Line~l of the proof of modal idempotency).
Like in LiiP and LiP,
	the key to the validity of modal idempotency is that 
		each agent (\eg $a$) can act herself as proof checker, 
			see \cite[Section~3.2.2]{LiP} for more details.
The law of modal idempotency \emph{bis} is a generalisation of modal idempotency.
Observe that 
	when $|\agents|=1$,
		Theorem~\ref{theorem:ProofCompilability} implies that 
			all occurrences of the proof modality in a compound LDiiP-formula can be compiled away in the sense that
				all these occurrences can be pushed in front of 
					possibly negated atomic sub-formulas (\ie literals) of the compound formula,
						with the axiom formula $\decides{M}{\knows{a}{M}}{a}{}$ acting as base case.
Hence in this case, 
	we can understand \emph{proof terms as truth-values} in the spirit of 
		a form of realizability interpretation of constructive logic \cite[Section~7.8]{Realizability}.
Otherwise, \ie 
	when $|\agents|>1$ (recall from Definition~\ref{definition:LDiiPLanguage} that $\agents\neq\emptyset$), 
		it is possible that 
			not all such occurrences in a compound formula can be compiled away
				(\cf Theorem~\ref{theorem:ProofCompilability}.7).

The following corollary asserts that 
	our negation-complete and thus disjunctive proof modality is also 
		an \emph{explicit refinement} of the standard (implicit) belief modality \cite{MultiAgents}.
\begin{corollary}[Negation-complete Disjunctive Explicit Belief]\label{corollary:NCDEB}\ 
	`$\decides{M}{\cdot}{a}{}$' is 
				a \emph{negation-complete disjunctive KD45-modality} of explicit agent belief,
	where 
		$M$ represents the explicit evidence term that can justify agent $a$'s belief.
\end{corollary}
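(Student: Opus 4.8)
The plan is to recall the standard Hilbert-style axiomatisation of KD45-belief and then to exhibit each of its characteristic schemas as either an axiom or an already-established theorem of LDiiP, reading `$\decides{M}{\cdot}{a}{}$' as a box operator $\Box$ indexed by the parameters $M$ and $a$. Recall that, over a classical propositional base closed under \emph{modus ponens} and necessitation, KD45 is obtained by adjoining Kripke's law (K), the consistency/seriality axiom (D) in the form $\neg\Box\neg\phi$ following from $\Box\phi$, positive introspection (4) $\Box\phi\limp\Box\Box\phi$, and negative introspection (5) $\neg\Box\phi\limp\Box\neg\Box\phi$. The first thing I would note is that normality --- hence K together with N and the propositional base --- is already in hand by Corollary~\ref{corollary:Normality}, so it remains only to verify D, 4, and 5, after which the remaining adjectives in the statement are immediate.

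For D, the seriality schema $\neg(\decides{M}{\false}{a}{})$ is literally the axiom of proof consistency, and its standard equivalent form $(\decides{M}{\phi}{a}{})\limp\neg(\decides{M}{\neg\phi}{a}{})$ is exactly Fact~\ref{fact:ProofConsistency}.2. For 4, positive introspection $(\decides{M}{\phi}{a}{})\limp\decides{M}{(\decides{M}{\phi}{a}{})}{a}{}$ is precisely the right-to-left direction of modal idempotency (Theorem~\ref{theorem:ProofCompilability}.6). Both of these are essentially read off existing results with no further work.

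The one step requiring a genuine combination --- and the part I expect to be the main obstacle --- is negative introspection 5, $\neg(\decides{M}{\phi}{a}{})\limp\decides{M}{(\neg\decides{M}{\phi}{a}{})}{a}{}$. Here I would instantiate maximal consistency (Theorem~\ref{theorem:ProofCompilability}.1) with $\decides{M}{\phi}{a}{}$ in place of $\phi$ to obtain $(\decides{M}{(\neg\decides{M}{\phi}{a}{})}{a}{})\lequiv\neg(\decides{M}{(\decides{M}{\phi}{a}{})}{a}{})$, and then rewrite the right-hand side by contraposing modal idempotency (Theorem~\ref{theorem:ProofCompilability}.6), $\neg(\decides{M}{(\decides{M}{\phi}{a}{})}{a}{})\lequiv\neg(\decides{M}{\phi}{a}{})$. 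Chaining these yields the equivalence $(\decides{M}{(\neg\decides{M}{\phi}{a}{})}{a}{})\lequiv\neg(\decides{M}{\phi}{a}{})$, whose right-to-left direction is schema~5. The care needed is only in tracking the nesting of the modality through the two substitution instances; once that bookkeeping is done the derivation is short.

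Finally, the qualifiers in the statement are direct reads of the remaining axioms and of the term structure: \emph{negation-complete} is the negation-completeness axiom $(\decides{M}{\phi}{a}{})\lor\decides{M}{\neg\phi}{a}{}$, \emph{disjunctive} is IDP \emph{bis} (Theorem~\ref{theorem:ProofCompilability}.3), and \emph{explicit} records the conceptual point that, unlike the implicit KD45-belief modality, `$\decides{M}{\cdot}{a}{}$' carries a syntactic witness $M$ --- the evidence term grounding $a$'s belief --- so that projecting out (or existentially abstracting over) $M$ recovers the implicit modality while the explicit one refines it. I would close by remarking that the derivation of 5 in fact establishes the stronger equivalence $(\decides{M}{(\neg\decides{M}{\phi}{a}{})}{a}{})\lequiv\neg(\decides{M}{\phi}{a}{})$, which together with modal idempotency reflects the expected collapse of iterated modal prefixes characteristic of a negation-complete modality.
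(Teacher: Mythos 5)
Your proposal is correct, and its overall architecture coincides with the paper's: normality (K, N, the propositional base), D, 4, negation completeness, and the internalised disjunction property are all read off Definition~\ref{definition:AxiomsRules}, Fact~\ref{fact:ProofConsistency}, Corollary~\ref{corollary:Normality} and Theorem~\ref{theorem:ProofCompilability}, and the only schema needing genuine derivation is negative introspection (5). Where you differ is precisely there. The paper derives 5 by a six-step chain of implications: from $\neg(\decides{M}{\phi}{a}{})$ it passes to $\decides{M}{\neg\phi}{a}{}$ (one direction of Theorem~\ref{theorem:ProofCompilability}.1), then to $\decides{M}{(\decides{M}{\neg\phi}{a}{})}{a}{}$ (Theorem~\ref{theorem:ProofCompilability}.6 instantiated at $\neg\phi$), and finally converts the inner $\decides{M}{\neg\phi}{a}{}$ into $\neg(\decides{M}{\phi}{a}{})$ \emph{under the outer modality} by applying regularity (Fact~\ref{fact:ProofConsistency}.1) to the other direction of Theorem~\ref{theorem:ProofCompilability}.1, closing with transitivity of $\limp$. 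You instead instantiate the maximal-consistency equivalence at the modal formula itself, obtaining $(\decides{M}{\neg(\decides{M}{\phi}{a}{})}{a}{})\lequiv\neg(\decides{M}{(\decides{M}{\phi}{a}{})}{a}{})$, and simplify the right-hand side by negating both sides of modal idempotency; chaining yields $(\decides{M}{\neg(\decides{M}{\phi}{a}{})}{a}{})\lequiv\neg(\decides{M}{\phi}{a}{})$, of which 5 is one direction. Both routes rest on the same two pillars (Theorem~\ref{theorem:ProofCompilability}.1 and \ref{theorem:ProofCompilability}.6), but yours is shorter, avoids regularity altogether, and delivers the strictly stronger biconditional as a by-product; the paper's version, staying at the level of one-directional implications, makes visible how negation completeness at the base level propagates inside the modality, which is perhaps more instructive but costs the extra regularity step.
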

\begin{proof}
	Consider that 
		`$\decides{M}{\cdot}{a}{}$' satisfies 
			Kripke's law (K, \cf Definition~\ref{definition:AxiomsRules}), 
			the D-law (called ``proof consistency'' in Definition~\ref{definition:AxiomsRules}),
			the \textbf{4}-law (\cf the only-if part of Theorem~\ref{theorem:ProofCompilability}.6), 
			necessitation (\cf Definition~\ref{definition:AxiomsRules}), and
			negation completeness (\cf Definition~\ref{definition:AxiomsRules}), and
			thus the internalised disjunction property 
				(\cf the if-part of Theorem~\ref{theorem:ProofCompilability}.3).
	That `$\decides{M}{\cdot}{a}{}$' also satisfies the \textbf{5}-law can be proved as follows:
	\begin{enumerate}
		\item $\LDiiPded\neg(\decides{M}{\phi}{a}{})\limp
								(\decides{M}{\neg\phi}{a}{}{})$\hfill 
				only-if-part of Theorem~\ref{theorem:ProofCompilability}.1
		\item $\LDiiPded(\decides{M}{\neg\phi}{a}{}{})\limp
							\decides{M}{(\decides{M}{\neg\phi}{a}{}{})}{a}{}$\hfill 
				only-if-part of Theorem~\ref{theorem:ProofCompilability}.6[$\neg\phi$]
		\item $\LDiiPded\neg(\decides{M}{\phi}{a}{})\limp
								\decides{M}{(\decides{M}{\neg\phi}{a}{}{})}{a}{}$\hfill 
										1, 2, transitivity of $\limp$
		\item $\LDiiPded(\decides{M}{\neg\phi}{a}{})\limp
							\neg(\decides{M}{\phi}{a}{})$\hfill 
				if-part of Theorem~\ref{theorem:ProofCompilability}.1
		\item $\LDiiPded(\decides{M}{(\decides{M}{\neg\phi}{a}{})}{a}{})\limp
				\decides{M}{\neg(\decides{M}{\phi}{a}{})}{a}{}$\hfill 4, regularity
		\item $\LDiiPded\neg(\decides{M}{\phi}{a}{})\limp
				\decides{M}{\neg(\decides{M}{\phi}{a}{})}{a}{}$\hfill 3, 5, transitivity of $\limp$.
	\end{enumerate}
\end{proof}
Thanks to epistemic truthfulness, 
	$\knows{a}{M}$ is a sufficient condition for `$\decides{M}{\cdot}{a}{}$' to
		behave like a standard S5-knowledge modality \cite{MultiAgents,Epistemic_Logic,EpistemicLogicFiveQuestions}, which 
			not only obeys the D-law but also the stronger T-law, in the sense that
				$$\LDiiPded\knows{a}{M}\limp(\underbrace{(\decides{M}{\phi}{a}{})\limp\phi}_{\text{T-law}}).$$

In the following corollary, 
	we construct also 
		a disjunctive but negation-\emph{in}complete explicit refinement of (implicit) S4-provability.
\begin{corollary}[Disjunctive Explicit Provability]\label{corollary:DEK}
	`$\knows{a}{M}\land\decides{M}{\cdot}{a}{}$' is 
		a \emph{disjunctive but negation-incomplete S4-modality} of explicit agent provability, where 
			$M$ represents the explicit evidence term that does justify agent $a$'s knowledge.
\end{corollary}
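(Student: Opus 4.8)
The plan is to mirror the proof of Corollary~\ref{corollary:NCDEB}, but now to verify the three defining laws of \textbf{S4} --- K, T, and \textbf{4} --- for the compound modality, abbreviating $B\phi\defeq\knows{a}{M}\land\decides{M}{\phi}{a}{}$. The distribution law K for $B$ follows immediately from Kripke's law for `$\decides{M}{\cdot}{a}{}$' (Definition~\ref{definition:AxiomsRules}): assuming $B(\phi\limp\phi')$ and $B\phi$, I extract $\knows{a}{M}$, $\decides{M}{(\phi\limp\phi')}{a}{}$, and $\decides{M}{\phi}{a}{}$, whence $\decides{M}{\phi'}{a}{}$ by K, and re-conjoining $\knows{a}{M}$ yields $B\phi'$. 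The genuinely new ingredient compared with Corollary~\ref{corollary:NCDEB} is the reflexivity law T, $\LDiiPded B\phi\limp\phi$: this is exactly the conjunctive repackaging of epistemic truthfulness $\LDiiPded(\decides{M}{\phi}{a}{})\limp(\knows{a}{M}\limp\phi)$ (Definition~\ref{definition:AxiomsRules}), the very form displayed after Corollary~\ref{corollary:NCDEB}, and it is precisely what lifts `$\decides{M}{\cdot}{a}{}$' from a merely serial (D) to a reflexive (T) modality once the evidence is actually known --- whence the shift from ``can justify belief'' to ``does justify knowledge''.

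The real work is the transitivity law \textbf{4}, $\LDiiPded B\phi\limp BB\phi$. Unfolding, $BB\phi=\knows{a}{M}\land\decides{M}{(\knows{a}{M}\land\decides{M}{\phi}{a}{})}{a}{}$, so from the hypothesis $B\phi$ I retain the conjunct $\knows{a}{M}$ and must still produce $\decides{M}{(\knows{a}{M}\land\decides{M}{\phi}{a}{})}{a}{}$. By proof conjunction \emph{bis} (Theorem~\ref{theorem:ProofCompilability}.2) this splits into $\decides{M}{(\knows{a}{M})}{a}{}$ and $\decides{M}{(\decides{M}{\phi}{a}{})}{a}{}$: the former is the self-knowledge axiom of Definition~\ref{definition:AxiomsRules}, and the latter follows from the surviving hypothesis $\decides{M}{\phi}{a}{}$ by the \textbf{4}-direction of modal idempotency (Theorem~\ref{theorem:ProofCompilability}.6), \ie $\LDiiPded(\decides{M}{\phi}{a}{})\limp\decides{M}{(\decides{M}{\phi}{a}{})}{a}{}$. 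This is the step where the two components of $B$ genuinely interact, and where I expect the only real friction: one must remember to internalise the knowledge conjunct via \emph{self-knowledge} rather than treating $\knows{a}{M}$ as inert under the modality.

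It remains to account for necessitation, disjunctivity, and negation-incompleteness. As befits an \emph{explicit} provability modality, necessitation for $B$ cannot hold in the naive form ``$\LDiiPded\phi$ implies $\LDiiPded B\phi$'' since $\knows{a}{M}$ is not a theorem; it holds only relative to possession of the evidence, \ie $\LDiiPded\phi$ implies $\LDiiPded\knows{a}{M}\limp B\phi$, which is immediate because rule N already gives $\LDiiPded\decides{M}{\phi}{a}{}$. Disjunctivity transfers verbatim from the if-part of IDP \emph{bis} (Theorem~\ref{theorem:ProofCompilability}.3): distributing the $\knows{a}{M}$ conjunct over the disjunction yields $\LDiiPded B(\phi\lor\phi')\limp(B\phi\lor B\phi')$. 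Finally, negation-incompleteness is the clean by-product of the same distribution: since $B\phi\lor B\neg\phi\LDiiPdedBis\knows{a}{M}\land((\decides{M}{\phi}{a}{})\lor\decides{M}{\neg\phi}{a}{})$ and the right-hand disjunction is a theorem by negation completeness of `$\decides{M}{\cdot}{a}{}$', we obtain $B\phi\lor B\neg\phi\LDiiPdedBis\knows{a}{M}$; as $\knows{a}{M}$ is a contingent atom and not a theorem, $B$ fails negation completeness. This last equivalence is exactly the price of passing from the KD45-belief of Corollary~\ref{corollary:NCDEB} to S4-provability.
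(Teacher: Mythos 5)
Your proposal is correct and follows the same overall route as the paper's proof: the T-law is obtained as the conjunctive repackaging of epistemic truthfulness, disjunctivity transfers from IDP \emph{bis}, and negation-incompleteness is traced back to $\not\LDiiPded\knows{a}{M}$. The difference is one of explicitness, and it is in your favour. The paper's proof consists essentially of citing Corollary~\ref{corollary:NCDEB} plus the equivalence of the compound truth law with epistemic truthfulness, silently leaving to the reader the verification that K and the \textbf{4}-law lift from the bare modality `$\decides{M}{\cdot}{a}{}$' to the conjunction `$\knows{a}{M}\land\decides{M}{\cdot}{a}{}$'. You carry out exactly this lifting, and your \textbf{4}-law derivation --- splitting $\decides{M}{(\knows{a}{M}\land\decides{M}{\phi}{a}{})}{a}{}$ via proof conjunctions \emph{bis} (Theorem~\ref{theorem:ProofCompilability}.2) into the self-knowledge axiom and the proof-transitivity direction of modal idempotency (Theorem~\ref{theorem:ProofCompilability}.6) --- is precisely the nontrivial interaction of the two conjuncts that the paper's citation glosses over. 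Your two refinements are also sound and go beyond the paper's text: necessitation for the compound modality indeed holds only relativised to $\knows{a}{M}$ (the paper does not address this point at all), and your observation that $((\knows{a}{M}\land\decides{M}{\phi}{a}{})\lor(\knows{a}{M}\land\decides{M}{\neg\phi}{a}{}))\lequiv\knows{a}{M}$ is provable makes precise the paper's terse justification of negation-incompleteness, which merely asserts non-theoremhood ``because $\not\LDiiPded\knows{a}{M}$'' (itself grounded, as the paper notes, in the arbitrariness of $\Gamma_{1}$ --- the one justification you should cite rather than calling $\knows{a}{M}$ a contingent atom).
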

\begin{proof}
	By Corollary~\ref{corollary:NCDEB} and 
	the fact that 
		the truth law 
			$\LDiiPded(\knows{a}{M}\land\decides{M}{\phi}{a}{})\limp\phi$ for 
				the modality `$\knows{a}{M}\land\decides{M}{\cdot}{a}{}$' 
					is equivalent to 
						the law of epistemic truthfulness (\cf Definition~\ref{definition:AxiomsRules}). 
	Note that 
		although the modality `$\knows{a}{M}\land\decides{M}{\cdot}{a}{}$' is evidently disjunctive, \ie 
			$\LDiiPded(\knows{a}{M}\land\decides{M}{(\phi\lor\phi')}{a}{})\limp 
				((\knows{a}{M}\land\decides{M}{\phi}{a}{})\lor(\knows{a}{M}\land\decides{M}{\phi'}{a}{}))$,
					it is negation-\emph{in}complete in that 
	$\not\LDiiPded(\knows{a}{M}\land\decides{M}{\phi}{a}{})\lor(\knows{a}{M}\land\decides{M}{\neg\phi}{a}{})$,
	because $\not\LDiiPded\knows{a}{M}$, in turn because of 
		the arbitrariness of $\Gamma_{1}$ (\cf Definition~\ref{definition:AxiomsRules}). 
	Fixing $\Gamma_{1}$ so that 
		a resource-unbounded agent $a$ unable to guess  
			knows all messages $M$ could only make sense for $\agents=\set{a}$.
	Otherwise, \ie when all agents know all messages, why interact with each other?
\end{proof}

\subsection{Semantically}\label{section:Semantically}
We continue to 
	present the concretely constructed semantics 
	as well as the standard abstract semantic interface for LDiiP, and
	prove the axiomatic adequacy of the proof system with respect to this interface.
We justify the existence of the constructive semantics of LDiiP by 
	invoking the Axiom of Choice on LiiP's \cite{KramerICLA2013,LiiP} and 
		then also construct it in terms of a concrete oracle-computable function.

\subsubsection{Concretely}
The ingredients for the concrete semantics of LiiP, from which 
	we will construct the concrete semantics of LDiiP, 
		are displayed in Table~\ref{table:LiiP}.
Therefrom, 
	we will only need 
		a concrete instance of $\states$ and $\msgs{a}$, and 
		an abstract instance of $\clo{a}{s}$ as ingredients for LDiiP.
Observe there that 
	the concrete accessibility $\pAccess{M}{a}{\community}$ of LiiP is 
		a totally defined proper (non-functional) relation.
Yet we do need a concrete accessibility relation for LDiiP that is functional, because
	LDiiP's negation-completeness axiom corresponds to 
	the functionality property of such a relation.
(LDiiP's proof consistency axiom corresponds to the 
	totality property of such a relation.)
Fortunately, 
	the concrete accessibility $\pAccess{M}{a}{\community}$ of LiiP is totally defined, and
	so we know by the Axiom of Choice AC[$\pAccess{M}{a}{\community}$], which 
	we may thus apply to $\pAccess{M}{a}{\community}$, that 
		$\pAccess{M}{a}{\community}$ can be ``functionalised,'' that is \cite{NotesOnSetTheory}, 
			\begin{multline*}\tag{AC[$\pAccess{M}{a}{\community}$]}
			\text{$\underbrace{\text{for all $s\in\states$, 
					there is $s'\in\states$ such that 
						$s\pAccess{M}{a}{\community}s'$}}_{\text{$\pAccess{M}{a}{\community}$ is totally defined}}$ implies}\\
			\text{$\underbrace{\text{there is $f:\states\rightarrow\states$ such that 
					for all $s\in\states$, 
						$s\pAccess{M}{a}{\community}f(s)$}}_{\text{$\pAccess{M}{a}{\community}$ 
							can be ``functionalised''}}.$}
			\end{multline*}
Notice that 
	the Axiom of Choice is non-constructive in that 
		it abstractly asserts the conditional existence of a certain $f$ but 
			without actually providing a concrete example of such an $f$.
Thus our problem now is to find such an $f$ for $\pAccess{M}{a}{\community}$, which
	will allow us to construct a functional concrete accessibility for LDiiP.
In Definition~\ref{definition:SemanticIngredients}, 
	we construct such an $f$ as an oracle-computational function $\sigma_{a}^{M}$ on concrete states 
	constructed inductively in terms of certain generalised successor functions.
The essential differences in Definition~\ref{definition:SemanticIngredients} to Table~\ref{table:LiiP} are 
grey-shaded.

\begin{table}
\centering
\caption{Semantic ingredients for LiiP \cite{KramerICLA2013,LiiP} (partially reused here for LDiiP)}
\smallskip\setlength{\fboxsep}{10pt}
\fbox{\begin{minipage}{0.94\textwidth}\small Let 
\begin{itemize}
	\item $\states\ni s$ designate the \emph{state space}---a set of \emph{system states} $s$
	\item $\msgs{a}:\states\rightarrow\powerset{\messages}$ designate 
			a \emph{raw-data extractor} that 
				extracts (without analysing) the (finite) set of messages from a system state $s$ that
					agent $a\in\agents$ has 
						either generated (assuming that only $a$ can generate $a$'s signature) or else 
						received \emph{as such} (not only as a strict subterm of another message)\label{page:RawData}; that is, $\msgs{a}(s)$ is $a$'s \emph{data base} in $s$
		\item $\clo{a}{s}:\powerset{\messages}\rightarrow\powerset{\messages}$ designate a \emph{data-mining operator} such that \label{page:DataMining}
			$\clo{a}{s}(\data)\defeq\clo{a}{}(\msgs{a}(s)\cup\data)\defeq\bigcup_{n\in\mathbb{N}}\clo{a}{n}(\msgs{a}(s)\cup\data)$, where for all $\data\subseteq\messages$:
				\begin{eqnarray*}
					\clo{a}{0}(\data) &\defeq& \set{a}\cup\data\\
					\clo{a}{n+1}(\data) &\defeq& 
						\begin{array}[t]{@{}l@{}}
							\clo{a}{n}(\data)\ \cup\\
							\setst{\pair{M}{M'}}{\set{M,M'}\subseteq\clo{a}{n}(\data)}\cup\quad\text{(pairing)}\\
							\setst{M, M'}{\pair{M}{M'}\in\clo{a}{n}(\data)}\cup\quad\text{(unpairing)}\\
							\setst{\sign{M}{a}}{M\in\clo{a}{n}(\data)}\cup\quad\text{(\emph{personal} signature \emph{synthesis})}\\
							\setst{\pair{M}{b}}{\sign{M}{b}\in\clo{a}{n}(\data)}\quad\text{(\emph{universal} signature \emph{analysis})}
						\end{array}
				\end{eqnarray*}
	\item \parbox[t]{0.9325\textwidth}{${\preorder{a}^{M}}\subseteq\states\times\states$ designate a 
	\emph{data preorder} on states such that
		for all $s,s'\in\states$,
			$s\preorder{a}^{M}s'$ :iff $\clo{a}{s}(\set{M})=\clo{a}{s'}(\emptyset)$,
				were $M$ can be viewed as \emph{oracle input} in addition to  
					$a$'s \emph{individual-knowledge base} $\clo{a}{s}(\emptyset)$ (\cf also \cite[Section~2.2]{LiP})}
	\item \parbox[t]{0.9325\textwidth}{${\preorder{\community}^{M}}\defeq(\bigcup_{a\in\community}{\preorder{a}^{M}})^{++}$, where 
			`$^{++}$' designates the closure operation of so-called \emph{generalised transitivity}
				in the sense that 
					 ${\preorder{\community}^{M}}\circ{\preorder{\community}^{M'}}\subseteq{\preorder{\community}^{\pair{M}{M'}}}$}
	\item ${\indist{a}{}{}}\defeq{\preorder{a}^{a}}$ designate an equivalence relation of 
		\emph{state indistinguishability}
	\item ${\pAccess{M}{a}{\community}}\subseteq\states\times\states$ designate 
		a \emph{concretely constructed accessibility relation}---short, \emph{concrete accessibility}---for 
			the non-monotonic proof modality of LiiP such that for all $s,s'\in\states$,  
			\begin{eqnarray*}
				s\pAccess{M}{a}{\community}s' &\defiff& 
								s'\in\hspace{-6ex} 
					\bigcup_{\scriptsize 
					\begin{array}{@{}c@{}}
						\text{$s\preorder{\community\cup\set{a}}^{M}\tilde{s}$ and
						}\\[0.5\jot] 
						M\in\clo{a}{\tilde{s}}(\emptyset) 
					\end{array}
					}\hspace{-5.5ex}
					[\tilde{s}]_{\indist{a}{}{}}\\
				&\text{(iff}& 
					\text{there is $\tilde{s}\in\states$ \st 
							$s\preorder{\community\cup\set{a}}^{M}\tilde{s}$ and
							$M\in\clo{a}{\tilde{s}}(\emptyset)$ and
							$\indist{a}{\tilde{s}}{s'}$).}
			\end{eqnarray*}
\end{itemize}
\end{minipage}}
\label{table:LiiP}
\end{table}

\begin{definition}[Semantic ingredients]\label{definition:SemanticIngredients}
For the set-theoretically constructive, model-theoretic study of LDiiP let 
\begin{itemize}
	\item $\states\ni s\colorbox[gray]{0.75}{$\bnfeq\mathtt{0}\bnfor \gscc{a}{M}(s)$}$, where 
				$\mathtt{0}$ can be understood as a zero data point (representing an initial state  
					for example), and  
				$\gscc{a}{M}$ can be read as ``agent $a$ receives message $M$ 
					(for example from another agent acting as an oracle)''
	\item $\msgs{a}:\states\rightarrow\powerset{\messages}$ be such that 
			\begin{align*}
				\msgs{a}(\mathtt{0}) &\defeq \emptyset\\
				\msgs{a}(\gscc{b}{M}(s)) &\defeq 
					\begin{cases}
						\msgs{a}(s)\cup\set{M} & \text{if $a=b$, and}\\
						\msgs{a}(s) & \text{otherwise}
					\end{cases}
			\end{align*}
	\item $\clo{a}{}:\powerset{\messages}\rightarrow\powerset{\messages}$ designate a compact closure operator
			and define $\clo{a}{s}:\powerset{\messages}\rightarrow\powerset{\messages}$ such that
			$\clo{a}{s}(\data)\defeq\clo{a}{}(\msgs{a}(s)\cup\data)\defeq\bigcup_{n\in\mathbb{N}}\clo{a}{n}(\msgs{a}(s)\cup\data)$
	\item \colorbox[gray]{0.75}{$\sigma_{a}^{M}:\states\rightarrow\states$ be so that 
				$\sigma_{a}^{M}(s)\defeq\begin{cases}
						s & \text{if $M\in\clo{a}{s}(\emptyset)$, and}\\
						\gscc{a}{M}(s) & \text{otherwise (oracle input)}
					\end{cases}$}
	\item ${\pAccess{M}{a}{}}\subseteq\states\times\states$ designate 
		a \emph{\textbf{concretely constructed} accessibility relation}---short, \emph{\textbf{concrete} accessibility}---for 
			the negation-complete disjunctive proof modality such that for all $s,s'\in\states$,
			$$\colorbox[gray]{0.75}{$s\pAccess{M}{a}{}s'$ :iff $s'=\sigma_{a}^{M}(s)$.}$$
\end{itemize}
\end{definition}

\begin{fact}\ 
	\begin{enumerate}
		\item $\sigma_{a}^{M}$ (and thus $\pAccess{M}{a}{}$) is oracle-computable.
		\item If $\clo{a}{}$ is polynomial-time computable 
				then so is $\sigma_{a}^{M}$ (and thus $\pAccess{M}{a}{}$).
	\end{enumerate}
\end{fact}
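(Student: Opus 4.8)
The plan is to observe that the entire content of both claims is concentrated in the single membership test that defines $\sigma_{a}^{M}$, and that everything surrounding that test is elementary manipulation of finite terms. Concretely, by its defining case split (Definition~\ref{definition:SemanticIngredients}), computing $\sigma_{a}^{M}(s)$ decomposes into three subtasks: (i) extract the finite data base $\msgs{a}(s)$ from the state $s$; (ii) decide whether $M\in\clo{a}{s}(\emptyset)=\clo{a}{}(\msgs{a}(s))$; and (iii) output $s$ if the test succeeds and $\gscc{a}{M}(s)$ otherwise. Since the relation $\pAccess{M}{a}{}$ is exactly the graph of $\sigma_{a}^{M}$ (we have $s\pAccess{M}{a}{}s'$ iff $s'=\sigma_{a}^{M}(s)$) and equality of states is a decidable test on finite terms, both the computability and the complexity claims transfer from $\sigma_{a}^{M}$ to $\pAccess{M}{a}{}$ by one further evaluation-and-comparison; so it suffices to treat the function.

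For Part~1, first I would note that subtasks (i) and (iii) are primitive-recursive. A state $s$ is, by the grammar $s\bnfeq\mathtt{0}\bnfor\gscc{a}{M}(s)$, a finite term recording a history of message receipts, so $\msgs{a}(s)$ is computed and terminates by a single structural recursion following its defining equations, and the two output terms $s$ and $\gscc{a}{M}(s)$ are formed by elementary term operations. The only step a plain machine need not be able to perform is subtask (ii), because $\clo{a}{}$ is assumed in Definition~\ref{definition:SemanticIngredients} only to be a compact closure operator, with no computability guarantee. This is precisely where the oracle enters: relative to an oracle answering the membership query $M\in\clo{a}{}(\data)$ for finite $\data$ (equivalently, a closure oracle, which also models the imaginary provider of the proof $M$), the composition of (i)--(iii) is an oracle computation. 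Hence $\sigma_{a}^{M}$, and with it $\pAccess{M}{a}{}$, is oracle-computable. (In the concrete instantiation of Table~\ref{table:LiiP}, where $\clo{a}{}$ is generated by the computable pairing and signature rules, the closure is even recursively enumerable, so the oracle is needed only to settle the negative, co-r.e.\ direction of the test.)

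For Part~2, I read the hypothesis ``$\clo{a}{}$ is polynomial-time computable'' as: the membership problem $M\in\clo{a}{}(\data)$ is decidable in time polynomial in $|M|+|\data|$. This is the only sensible reading, since the closure of a finite set is in general infinite under pairing, so one cannot materialise it. Under this hypothesis the oracle query of Part~1 is replaced by an honest polynomial-time subroutine, and the running times of the three subtasks compose to a polynomial: computing $\msgs{a}(s)$ is a single pass costing $O(|s|)$ and yields a set with $|\msgs{a}(s)|\le|s|$; the membership test then runs in time polynomial in $|M|+|s|$; and forming the output costs $O(|M|+|s|)$. Therefore $\sigma_{a}^{M}$ is polynomial-time computable, and so is $\pAccess{M}{a}{}$ after one additional state-equality check.

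The hard part will be expository rather than mathematical: making precise the reading of the two computability notions so that each claim genuinely reduces to a statement about the closure operator alone. In particular I would stress that only a membership test, never an enumeration of the (possibly infinite) closure $\clo{a}{s}(\emptyset)$, is required, and that the finiteness of every state $s$ as a term is what keeps subtasks (i) and (iii) --- and the final state-equality comparison for $\pAccess{M}{a}{}$ --- elementary.
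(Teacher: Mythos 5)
Your proposal is correct and follows essentially the same route as the paper, whose entire proof is the one-line observation that if $\clo{a}{}$ is computable then so is $\sigma_{a}^{M}$ (and similarly for the polynomial-time case); you have simply made explicit what the paper leaves implicit, namely that the state operations are elementary term manipulations, that the only non-trivial step is the membership test $M\in\clo{a}{s}(\emptyset)$ delegated to the oracle (or to the polynomial-time subroutine), and that the claims transfer to $\pAccess{M}{a}{}$ via a state-equality check.
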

\begin{proof}
	Clearly, if $\clo{a}{}$ is computable then $\sigma_{a}^{M}$ is computable, and
	similarly for 2.
\end{proof}
\noindent
In particular when $\clo{a}{}=\mathrm{id}_{\powerset{\messages}}$, that is,   
	when $\clo{a}{}$ is the identity function on $\powerset{\messages}$ 
		($a$ performs no data-mining operations),
		$\pAccess{M}{a}{}$ is polynomial-time computable.

\begin{fact}\label{fact:AccessibilityRelations}
	For $\sigma_{a}^{M}$, fix $\clo{a}{}$ as in Table~\ref{table:LiiP}.
	Then: 
	\begin{enumerate}
		\item for all $s\in\states$, $s\pAccess{M}{a}{\community}\sigma_{a}^{M}(s)\,;$
		\item ${\pAccess{M}{a}{}}\subseteq{\pAccess{M}{a}{\emptyset}}$
					(and ${\pAccess{M}{a}{\emptyset}}\subseteq{\pAccess{M}{a}{\community}}$ \cite{KramerICLA2013,LiiP}).
	\end{enumerate}
\end{fact}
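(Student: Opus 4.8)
The plan is to prove part~1 by exhibiting $\sigma_a^M(s)$ itself as the existential witness $\tilde{s}$ demanded by the LiiP-definition of $\pAccess{M}{a}{\community}$ (Table~\ref{table:LiiP}), and then to obtain part~2 as the special case $\community=\emptyset$. Recall that $s\pAccess{M}{a}{\community}\sigma_a^M(s)$ unfolds to the requirement that there be some $\tilde{s}\in\states$ with $s\preorder{\community\cup\set{a}}^{M}\tilde{s}$, with $M\in\clo{a}{\tilde{s}}(\emptyset)$, and with $\indist{a}{\tilde{s}}{\sigma_a^M(s)}$. Setting $\tilde{s}\defeq\sigma_a^M(s)$, the third clause is just reflexivity of the equivalence relation $\equiv_a$, so it remains to establish the first two clauses.

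The two remaining clauses I would verify by a single case split on whether or not $M\in\clo{a}{s}(\emptyset)$, mirroring the case split in the definition of $\sigma_a^M$. If $M\in\clo{a}{s}(\emptyset)$, then $\sigma_a^M(s)=s$, so $M\in\clo{a}{\sigma_a^M(s)}(\emptyset)$ is immediate; if $M\notin\clo{a}{s}(\emptyset)$, then $\sigma_a^M(s)=\gscc{a}{M}(s)$ and $\msgs{a}(\gscc{a}{M}(s))=\msgs{a}(s)\cup\set{M}$, whence $M\in\msgs{a}(\sigma_a^M(s))\subseteq\clo{a}{\sigma_a^M(s)}(\emptyset)$. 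For the data-preorder clause $s\preorder{\community\cup\set{a}}^{M}\sigma_a^M(s)$, I would first reduce to $s\preorder{a}^{M}\sigma_a^M(s)$: since $a\in\community\cup\set{a}$ and the generalised-transitivity closure $(\cdot)^{++}$ is inflationary (being a closure operation), one has $\preorder{a}^{M}\subseteq\preorder{\community\cup\set{a}}^{M}$. By definition this reduces to showing $\clo{a}{s}(\set{M})=\clo{a}{\sigma_a^M(s)}(\emptyset)$, \ie $\clo{a}{}(\msgs{a}(s)\cup\set{M})=\clo{a}{}(\msgs{a}(\sigma_a^M(s)))$. In the second case the two arguments of $\clo{a}{}$ are literally equal, so the identity is trivial; in the first case $\msgs{a}(\sigma_a^M(s))=\msgs{a}(s)$ and the identity reduces to the standard closure-operator fact that $M\in\clo{a}{}(\data)$ implies $\clo{a}{}(\data\cup\set{M})=\clo{a}{}(\data)$, applied with $\data\defeq\msgs{a}(s)$.

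Part~2 then follows immediately: if $s\pAccess{M}{a}{}s'$, then by the LDiiP-definition $s'=\sigma_a^M(s)$, and instantiating the already-proved part~1 at $\community\defeq\emptyset$ yields $s\pAccess{M}{a}{\emptyset}\sigma_a^M(s)=s'$, that is, $s\pAccess{M}{a}{\emptyset}s'$; the parenthetical inclusion $\pAccess{M}{a}{\emptyset}\subseteq\pAccess{M}{a}{\community}$ is quoted from the LiiP development \cite{KramerICLA2013,LiiP}. The only mildly delicate step is the data-preorder clause, and there the crux is the closure-operator absorption identity above; this is where the hypothesis that $\clo{a}{}$ is a genuine (extensive, monotone, idempotent) compact closure operator rather than an arbitrary function is actually used, and it is precisely what makes the first case go through despite $\set{M}$ being supplied as oracle input on the left while not being recorded in $\msgs{a}(\sigma_a^M(s))$ on the right.
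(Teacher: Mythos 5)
Your proposal is correct and takes essentially the same route as the paper's own proof: both exhibit $\tilde{s}\defeq\sigma_{a}^{M}(s)$ as the existential witness in the definiens of $\pAccess{M}{a}{\community}$, reduce $s\preorder{\community\cup\set{a}}^{M}\sigma_{a}^{M}(s)$ to $s\preorder{a}^{M}\sigma_{a}^{M}(s)$, invoke reflexivity of $\equiv_{a}$, and obtain part~2 by instantiating part~1 at $\community\defeq\emptyset$. The only difference is one of detail, in your favour: the paper merely asserts $s\preorder{a}^{M}\sigma_{a}^{M}(s)$ and $M\in\clo{a}{\sigma_{a}^{M}(s)}(\emptyset)$ by inspection, whereas you verify them via the case split mirroring the definition of $\sigma_{a}^{M}$ and correctly isolate the closure-operator absorption identity ($M\in\clo{a}{}(\data)$ implies $\clo{a}{}(\data\cup\set{M})=\clo{a}{}(\data)$) as the crux of the case $M\in\clo{a}{s}(\emptyset)$.
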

\begin{proof}
	Fix $\clo{a}{}$ as in Table~\ref{table:LiiP}.
	For 1, consider that 
		$s\preorder{a}^{M}\sigma_{a}^{M}(s)$ and 
		thus $s\preorder{\community\cup\set{a}}^{M}\sigma_{a}^{M}(s)$, 
		$M\in\clo{a}{\sigma_{a}^{M}(s)}(\emptyset)$, and
		$\sigma_{a}^{M}(s)\equiv_{a}\sigma_{a}^{M}(s)$ in Table~\ref{table:LiiP}.	
	Hence there is $\tilde{s}\in\states$ such that
		$s\preorder{\community\cup\set{a}}^{M}\tilde{s}$ and 
		$M\in\clo{a}{\tilde{s}}(\emptyset)$ and 
		$\tilde{s}\equiv_{a}\sigma_{a}^{M}(s)$.
	(In reverse, 
		$\sigma_{a}^{M}$ can be used as a Skolem-function for 
			the existential quantifier in the previous statement and
				thus in the definiens of $\pAccess{M}{a}{\community}$ in Table~\ref{table:LiiP}.) 
	For 2, 
		inspect 1 and definitions.
\end{proof}
\noindent
Hence we have indeed found in $\sigma_{a}^{M}$ an instance of 
	an $f$ for $\pAccess{M}{a}{\community}$ whose existence AC[$\pAccess{M}{a}{\community}$] postulates and
	thus indeed constructed a functional totally defined sub-relation $\pAccess{M}{a}{}$ of $\pAccess{M}{a}{\community}$---from $\pAccess{M}{a}{\community}$ itself 
		(as a Skolemnisation of its definiens).
However notice that
	we have lost $\community$ in $\pAccess{M}{a}{}$ (non-communality), because
		$\sigma_{a}^{M}$ simply disregards $\community$.
This is the price for the functionality of $\pAccess{M}{a}{}$.
Actually,
	$\pAccess{M}{a}{}$ (for LDiiP) is a functional analog of  
	$\preorder{a}^{M}$ (for LiiP, see Table~\ref{table:LiiP}).
And it is impossible to construct a functional analog of 
$\pAccess{M}{a}{\community}$ from a union of $\pAccess{M}{a}{}$ over $\community$, because
	such a union of functions need not be a function anymore.
In contrast, it is possible to construct a functional analog of
$\pAccess{M}{a}{\community}$ from an intersection of $\pAccess{M}{a}{}$ over $\community$, since
	such an intersection of functions is again a function.
Yet unfortunately it then need not be total anymore!
We can thus assert the following negative fact about our negation-complete proofs.
\begin{fact}\label{fact:NCiNC}
	Negation-completeness implies non-communality.
\end{fact}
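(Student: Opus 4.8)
The plan is to read this Fact through the semantic correspondence that underlies LDiiP's adequacy rather than through its syntax. First I would make explicit the two correspondences already flagged in the text: the negation-completeness axiom $(\decides{M}{\phi}{a}{})\lor\decides{M}{\neg\phi}{a}{}$ is the modal-correspondence schema for \emph{functionality} of the accessibility relation (each state has at most one successor), while proof consistency $\neg(\decides{M}{\false}{a}{})$ is the schema for \emph{totality} (each state has at least one successor). Hence any sound accessibility for the LDiiP modality must be the graph of a \emph{total function}, which is exactly why Definition~\ref{definition:SemanticIngredients} interprets the modality via the total function $\sigma_{a}^{M}$ rather than via a proper relation.

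Next I would formalise what a genuinely \emph{communal} modality would have to be. A modality $\decides{M}{\cdot}{a}{\community}$ with a non-trivial community $\community$ would, by analogy with LiiP (Table~\ref{table:LiiP}, where $\preorder{\community}^{M}=(\bigcup_{b\in\community}\preorder{b}^{M})^{++}$), be interpreted by aggregating the individual accessibilities $\pAccess{M}{b}{}$ over the agents $b$ relevant to $\community$. The two canonical order-preserving ways to aggregate a family of graphs are union and intersection, so I would argue the impossibility by treating each in turn, after fixing a concrete witness: pick a message $M$ and a state $s\in\states$ together with two agents $b,c\in\community$ whose data-mining operators $\clo{b}{}$, $\clo{c}{}$ differ enough that $\sigma_{b}^{M}(s)\neq\sigma_{c}^{M}(s)$, which is generically the case once $|\community|>1$.

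The core of the argument is then the dilemma. Taking the union $\bigcup_{b\in\community}\pAccess{M}{b}{}$ preserves totality (each $\pAccess{M}{b}{}$ is already total) but, at the witness $s$, relates $s$ to the two distinct states $\sigma_{b}^{M}(s)$ and $\sigma_{c}^{M}(s)$; the union is therefore not functional, so negation completeness fails. Dually, taking the intersection $\bigcap_{b\in\community}\pAccess{M}{b}{}$ preserves functionality (an intersection of graphs of functions is a partial function) but, at the same witness $s$, the two functions disagree and the intersection has no successor of $s$; it is therefore not total, so proof consistency fails. Either aggregation thus destroys one of the two properties that together \emph{define} an LDiiP accessibility, and this is unavoidable whenever $|\community|>1$. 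The only way to keep both totality and functionality is to drop $\community$ altogether, equivalently to restrict to singleton communities $\community=\set{a}$, which is precisely non-communality; hence negation completeness forces it.

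The step I expect to be the main obstacle is not the union/intersection computation, which is routine, but making the \emph{impossibility} tight. I must (a) argue that union and intersection really are the only aggregation schemes worth considering, i.e. that any order-preserving Boolean combination of the individual graphs reduces to these two extremes, and (b) guarantee that the witness is non-vacuous by exhibiting an explicit two-agent model in which $\sigma_{b}^{M}(s)\neq\sigma_{c}^{M}(s)$, so that the failure of functionality (respectively totality) is genuine and not an artefact of a degenerate community. With such a witness in hand the dichotomy closes and the Fact follows.
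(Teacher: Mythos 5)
Your proposal takes essentially the same route as the paper: the paper likewise identifies the negation-completeness axiom with functionality and the proof-consistency axiom with totality of the accessibility relation, and then argues the Fact via exactly your dilemma---a union of the individual functional accessibilities $\pAccess{M}{a}{}$ over $\community$ need not be functional, while their intersection need not be total, so the community parameter must be dropped. The additional rigour you flag as an obstacle (an explicit two-agent witness with $\sigma_{b}^{M}(s)\neq\sigma_{c}^{M}(s)$, and a justification that union and intersection exhaust the sensible aggregation schemes) is not supplied by the paper either, which asserts the Fact from this informal dilemma alone.
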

\noindent
This fact could be useful to establish the theoretical and thus also practical impossibility 
	of engineering social procedures \cite{SocialSoftwareBis} for which  
		negation completeness would be a necessary condition.
Due to the same fact, 
	there is no community parameter $\community$ in `$\decides{}{}{a}{}$' and, in particular, 
		no LDiiP-analog of the LiiP-axiom 
		$$\LiiPded(\proves{M}{\phi}{a}{\community\cup\community'})\limp
				\proves{M}{\phi}{a}{\community}\quad\text{(see \cite{KramerICLA2013,LiiP})}.$$
Note that if we were to mix LiiP- and LDiiP-modalities in a single logic,
	the formula 
		$(\proves{M}{\phi}{a}{\emptyset})\limp\decides{M}{\phi}{a}{}$ would be a sound axiom in that logic 
			due to Fact~\ref{fact:AccessibilityRelations}.2.

\begin{proposition}\label{proposition:ConcreteAccessibility}\  
\begin{enumerate}
	\item there is $s'\in\states$ such that $s\pAccess{M}{a}{}s'$\quad(seriality/totality)
	\item if $s\pAccess{M}{a}{}s'$ and 
				$s\pAccess{M}{a}{}s''$ then $s'=s''$\quad(determinism/functionality)
	\item if $M\in\clo{a}{s}(\emptyset)$ then 
			$s\pAccess{M}{a}{}s$\quad(conditional reflexivity)
	\item if $s\pAccess{M}{a}{}s'$ then $M\in\clo{a}{s'}(\emptyset)$ \quad(epistemic image)
\end{enumerate}
\end{proposition}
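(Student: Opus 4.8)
The plan is to read all four properties directly off the single observation that, by Definition~\ref{definition:SemanticIngredients}, the concrete accessibility $\pAccess{M}{a}{}$ is exactly the graph of the total function $\sigma_{a}^{M}\colon\states\rightarrow\states$, namely $s\pAccess{M}{a}{}s'$ holds precisely when $s'=\sigma_{a}^{M}(s)$. Parts~1 and~2 then amount to the elementary fact that the graph of a total function is both serial and deterministic.

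For \emph{totality} (Part~1) I would take the witness $s'\defeq\sigma_{a}^{M}(s)$; since $\sigma_{a}^{M}$ is defined for every $s\in\states$, this witness always exists, and $s\pAccess{M}{a}{}\sigma_{a}^{M}(s)$ holds by definition. For \emph{functionality} (Part~2), from $s\pAccess{M}{a}{}s'$ and $s\pAccess{M}{a}{}s''$ the definition yields $s'=\sigma_{a}^{M}(s)=s''$, so single-valuedness of $\sigma_{a}^{M}$ gives $s'=s''$ at once. For \emph{conditional reflexivity} (Part~3) I would invoke the first clause of the definition of $\sigma_{a}^{M}$: when $M\in\clo{a}{s}(\emptyset)$ we have $\sigma_{a}^{M}(s)=s$, hence $s=\sigma_{a}^{M}(s)$ and therefore $s\pAccess{M}{a}{}s$.

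The only part that calls for a small computation is the \emph{epistemic image} (Part~4), and this is where I would take the most care, although it too remains routine. Given $s\pAccess{M}{a}{}s'$, that is $s'=\sigma_{a}^{M}(s)$, I would split on the defining condition of $\sigma_{a}^{M}$. If $M\in\clo{a}{s}(\emptyset)$, then $s'=s$ and the goal $M\in\clo{a}{s'}(\emptyset)$ is immediate. Otherwise $s'=\gscc{a}{M}(s)$, and the clause $a=b$ in the inductive definition of $\msgs{a}$ gives $\msgs{a}(s')=\msgs{a}(s)\cup\set{M}$, so that $M\in\msgs{a}(s')$. Since $\clo{a}{s'}(\emptyset)=\clo{a}{}(\msgs{a}(s'))$ and $\clo{a}{}$ is a closure operator and hence extensive, we obtain $M\in\msgs{a}(s')\subseteq\clo{a}{}(\msgs{a}(s'))=\clo{a}{s'}(\emptyset)$, as required. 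The main (mild) obstacle throughout is merely to keep the two cases of the $\sigma_{a}^{M}$-definition apart and to recall that extensivity of $\clo{a}{}$ is precisely what deposits the freshly received message $M$ into agent $a$'s data base at the successor state $s'$.
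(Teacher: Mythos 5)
Your proposal is correct and takes essentially the same route as the paper: the paper's own proof reads simply ``by inspection of definitions,'' with the single hint that for Part~4 one should consider $M\in\clo{a}{\gscc{a}{M}(s)}(\emptyset)$ --- which is exactly the nontrivial branch of your case split. Your write-up just makes explicit what the paper leaves implicit (Parts~1--3 as properties of the graph of the total function $\sigma_{a}^{M}$, and extensivity of the closure operator $\clo{a}{}$ delivering the freshly received $M$ into $\clo{a}{s'}(\emptyset)$ in Part~4), so there is no gap and no divergence in method.
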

\begin{proof}
	By inspection of definitions.
	(For 4, consider that $M\in\clo{a}{\gscc{a}{M}(s)}(\emptyset)$.)
\end{proof}

\subsubsection{Abstractly}
We now continue to 
	present the abstract semantic interface for LDiiP, and 
	prove the axiomatic adequacy of the proof system with respect to this interface.

\begin{definition}[Kripke-model]\label{definition:KripkeModel}
We define the \emph{satisfaction relation} `\thinspace$\models$' for 
		$\LDiiP$ in Table~\ref{table:SatisfactionRelation}, 
	\begin{table}[t]
	\centering
	\caption{Satisfaction relation}
	\smallskip
	\fbox{$\begin{array}{@{}rcl@{}}
		(\aModalFrame, \mathcal{V}), s\models P &\text{:iff}& s\in\mathcal{V}(P)\\[\jot]
		(\aModalFrame, \mathcal{V}), s\models\neg\phi &\text{:iff}& \text{not $(\aModalFrame, \mathcal{V}), s\models\phi$}\\[\jot]
		(\aModalFrame, \mathcal{V}), s\models\phi\land\phi' &\text{:iff}& \text{$(\aModalFrame, \mathcal{V}), s\models\phi$ and $(\aModalFrame, \mathcal{V}), s\models\phi'$}\\[\jot]
		(\aModalFrame, \mathcal{V}), s\models\decides{M}{\phi}{a}{} &\text{:iff}& 
				\text{for all $s'\in\states$, 
						if $s\access{M}{a}{}s'$ 
						then $(\aModalFrame, \mathcal{V}), s'\models\phi$}
	\end{array}$}
	\label{table:SatisfactionRelation}
	\end{table}
where 
	\begin{itemize}
		\item $\mathcal{V}:\mathcal{P}\rightarrow\powerset{\states}$ designates a usual \emph{valuation function,} yet
			partially predefined such that for all $a\in\agents$ and $M\in\messages$,
						$$\mathcal{V}(\knows{a}{M})\defeq\setst{s\in\states}{M\in\clo{a}{s}(\emptyset)} $$
						for $\states$ assumed abstract (and thus general) like in Table~\ref{table:LiiP} and
								$\clo{a}{s}$ like in Definition~\ref{definition:SemanticIngredients} but
									with $\msgs{a}$ abstract (and thus general) like in Table~\ref{table:LiiP}
				
		\item $\aModalFrame\defeq(\states,\set{\access{M}{a}{}}_{M\in\messages,a\in\agents})$
			designates a (modal) \emph{frame} for $\LDiiP$ with 
		an \emph{\textbf{abstractly constrained} accessibility relation}---short, \emph{\textbf{abstract} accessibility}---${\access{M}{a}{}}\subseteq\states\times\states$ for 
			the negation-complete disjunctive proof modality such that---the \textbf{\emph{semantic interface:}}\label{page:AbstractProofAccessibility} 
			\begin{itemize}
				\item there is $s'\in\states$ such that $s\access{M}{a}{}s'$\quad(seriality/totality)
				\item if $s\access{M}{a}{}s'$ and 
						$s\access{M}{a}{}s''$ then $s'=s''$\quad(determinism/functionality)
				\item if $M\in\clo{a}{s}(\emptyset)$ then 
						$s\access{M}{a}{}s$\quad(conditional reflexivity)
				\item if $s\access{M}{a}{}s'$ then $M\in\clo{a}{s'}(\emptyset)$ \quad(epistemic image)
			\end{itemize}
		\item $(\aModalFrame,\mathcal{V})$ designates a (modal) \emph{model} for $\LDiiP$.
	\end{itemize}
\end{definition}
Looking back, 
	we recognise that Proposition~\ref{proposition:ConcreteAccessibility} actually establishes the important fact that
		our concrete accessibility $\pAccess{M}{a}{}$ in 
		Definition~\ref{definition:SemanticIngredients} realises 
			all the properties stipulated by  
		our abstract accessibility $\access{M}{a}{}$ in Definition~\ref{definition:KripkeModel};
		we say that 
		$$\text{$\pAccess{M}{a}{}$ \emph{exemplifies} (or \emph{realises}) $\access{M}{a}{}$.}$$

\begin{theorem}[Axiomatic adequacy]\label{theorem:Adequacy}\ 
	$\LDiiPded$ is \emph{adequate} for $\models$, \ie:
	\begin{enumerate}
		\item if $\LDiiPded\phi$ then $\models\phi$\quad(axiomatic soundness)
		\item if $\models\phi$ then $\LDiiPded\phi$\quad(semantic completeness).
	\end{enumerate}
\end{theorem}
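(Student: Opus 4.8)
The plan is to treat the two directions with the standard modal-logic toolkit---induction on derivations for soundness, and a canonical-model construction for completeness---exploiting the fact that the four clauses of the semantic interface in Definition~\ref{definition:KripkeModel} were deliberately engineered to match, one-for-one, the modal axioms of Definition~\ref{definition:AxiomsRules}. Soundness will verify each axiom against its corresponding interface property, and completeness will re-establish that same interface on a canonical model by invoking exactly the axioms that made soundness go through.

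For axiomatic soundness I would show, by induction on the stage $n$ at which $\phi$ enters $\Clo{}{n}(\emptyset)$, that every theorem is valid. In the base case each axiom of $\Gamma_{2}$ is checked: the propositional axioms $\Gamma_{0}$ are valid because the clauses for $\neg$ and $\land$ in Table~\ref{table:SatisfactionRelation} are classical, and the axioms $\Gamma_{1}$ are sound for the predefined valuation $\mathcal{V}(\knows{a}{M})$ by stipulation. For the modal axioms the correspondence is tight: Kripke's law holds because the modal clause is a universal quantification over successors; epistemic truthfulness holds by \emph{conditional reflexivity}; proof consistency $\neg(\decides{M}{\false}{a}{})$ holds by \emph{seriality} (a successor exists and cannot satisfy $\false$); negation completeness holds by \emph{functionality} (the unique successor either satisfies $\phi$ or it does not); and self-knowledge $\decides{M}{\knows{a}{M}}{a}{}$ holds by \emph{epistemic image} together with the shape of $\mathcal{V}(\knows{a}{M})$. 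The inductive step is routine: \emph{modus ponens} preserves pointwise truth, and necessitation preserves validity, since a globally true $\phi$ makes $\decides{M}{\phi}{a}{}$ true at every state irrespective of accessibility.

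For semantic completeness I would argue contrapositively, following the canonical-model construction of Appendix~\ref{appendix:LDiiPCompleteness}. Assuming $\not\LDiiPded\phi$, the set $\set{\neg\phi}$ is LDiiP-consistent and, by a Lindenbaum argument using the compactness of $\Clo{}{}$ (Proposition~\ref{proposition:Hilbert}), extends to a maximally consistent LDiiP-theory $w$ with $\neg\phi\in w$. I take $\states$ to be the collection of all maximally consistent theories, with generic element $\Gamma$, and define the canonical accessibility by letting $\Gamma\access{M}{a}{}\Delta$ hold iff $\setst{\psi}{(\decides{M}{\psi}{a}{})\in\Gamma}\subseteq\Delta$. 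The heart of the argument is the Truth Lemma, $(\aModalFrame,\mathcal{V}),\Gamma\models\psi$ iff $\psi\in\Gamma$, proved by induction on $\psi$; its modal case uses the usual existence lemma, available because the modality is normal (Corollary~\ref{corollary:Normality}), which functionality in fact collapses to a single successor. One then verifies that this canonical $\access{M}{a}{}$ satisfies the interface by \emph{mirroring} the soundness correspondence: seriality from proof consistency (were $\setst{\psi}{(\decides{M}{\psi}{a}{})\in\Gamma}$ inconsistent, normality would force $\decides{M}{\false}{a}{}\in\Gamma$), functionality from negation completeness (for any $\psi$, one of $\decides{M}{\psi}{a}{},\decides{M}{\neg\psi}{a}{}$ lies in $\Gamma$, forcing any two successors to agree), conditional reflexivity from epistemic truthfulness, and epistemic image from self-knowledge. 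Since $\neg\phi\in w$, the Truth Lemma yields $(\aModalFrame,\mathcal{V}),w\not\models\phi$, whence $\not\models\phi$.

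The step I expect to be the main obstacle is making the canonical model respect the \emph{predefined} valuation $\mathcal{V}(\knows{a}{M})\defeq\setst{s}{M\in\clo{a}{s}(\emptyset)}$: for the atomic base case of the Truth Lemma to agree with this constraint, I need $\knows{a}{M}\in\Gamma$ to hold exactly when $M\in\clo{a}{\Gamma}(\emptyset)$. I would set the abstract raw-data extractor on canonical states to $\msgs{a}(\Gamma)\defeq\setst{M}{(\knows{a}{M})\in\Gamma}$, so that the inclusion $\msgs{a}(\Gamma)\subseteq\clo{a}{\Gamma}(\emptyset)$ delivers one direction for free. The converse---that data-mining yields a message $M$ with $M\in\clo{a}{\Gamma}(\emptyset)$ only when $\knows{a}{M}\in\Gamma$ already---is where the otherwise-generic axioms $\Gamma_{1}$ must do real work: they have to axiomatise closure of individual knowledge under the very operations defining $\clo{a}{}$, so that an induction on the closure stages of $\clo{a}{}$ transfers membership back into $\Gamma$. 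This coherence between the logical theory $\Gamma_{1}$ of the knowledge predicate and the set-theoretic closure $\clo{a}{}$ is the only place where the genericity of the message language $\messages$ genuinely constrains the construction, and it is inherited essentially unchanged from the completeness proof for LiiP \cite{KramerICLA2013,LiiP}.
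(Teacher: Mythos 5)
Your proposal is correct and takes essentially the same route as the paper: soundness by checking each axiom against its matching interface property (exactly the paper's correspondences: epistemic truthfulness with conditional reflexivity, proof consistency with seriality, negation completeness with functionality, self-knowledge with epistemic image), and completeness via the same canonical model --- maximally consistent sets, accessibility defined by $\setst{\psi}{(\decides{M}{\psi}{a}{})\in w}\subseteq w'$, valuation by membership, and $\msgs{a}(w)\defeq\setst{M}{(\knows{a}{M})\in w}$ --- whose four interface properties are re-derived from the same four axioms, your functionality argument being precisely the paper's Reflection Lemma. The only deviations are cosmetic: your seriality argument (consistency of the set $\setst{\psi}{(\decides{M}{\psi}{a}{})\in w}$, with regularity applied to theorems) is the standard variant of what the paper does, and you are more explicit than the paper about the coherence requirement on $\Gamma_{1}$ needed to make the predefined valuation of $\knows{a}{M}$ agree with the atomic case of the Truth Lemma.
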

\begin{proof}
	Both parts can be proved with standard means:
	soundness follows as usual from 
		the admissibility of the axioms and rules 
			(\cf Appendix~\ref{appendix:AxiomaticSoundness}); and 
	completeness follows by means of the classical construction of canonical models,
		using Lindenbaum's construction of maximally consistent sets 
			(\cf Appendix~\ref{appendix:LDiiPCompleteness}).
\end{proof}

\begin{theorem}[Finite-model property]\label{theorem:FiniteModelProperty}
	For any LDiiP-model $\mathfrak{M}$,
	if $\mathfrak{M}, s\models\phi$ 
	then there is a finite LDiiP-model $\mathfrak{M}_{\mathrm{fin}}$ such that 
		$\mathfrak{M}_{\mathrm{fin}}, s\models\phi$.
\end{theorem}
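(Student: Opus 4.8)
The plan is to prove the finite-model property by the standard method of filtration, adapted to respect the four properties that the semantic interface of Definition~\ref{definition:KripkeModel} imposes on the accessibility relation $\access{M}{a}{}$. Suppose $\mathfrak{M}=(\aModalFrame,\mathcal{V})$ with $\aModalFrame=(\states,\set{\access{M}{a}{}})$ and $\mathfrak{M},s\models\phi$. First I would fix a finite, subformula-closed set $\Sigma$ of formulas that is adequate for $\phi$: take the (finite) set of subformulas of $\phi$, and close it under single negations so that it is large enough to separate the worlds we will need. Crucially, $\Sigma$ determines only finitely many message terms $M$ and finitely many agents $a$ occurring in modal subformulas $\decides{M}{\psi}{a}{}$, so only finitely many indexed accessibility relations are relevant. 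Then I would define the filtration equivalence on $\states$ by $s\sim s'$ :iff for all $\chi\in\Sigma$, $(\mathfrak{M},s\models\chi \text{ iff } \mathfrak{M},s'\models\chi)$, and take $\states_{\mathrm{fin}}\defeq\states/{\sim}$, which is finite because $\Sigma$ is finite. The valuation on equivalence classes is inherited in the usual way for the propositional variables occurring in $\Sigma$.

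Next I would define the filtrated accessibility relations. For each relevant pair $(M,a)$ I would use the smallest (finest) filtration compatible with the modal clause: set $[s]\,\access{M}{a}{\mathrm{fin}}\,[s']$ :iff there exist representatives $\tilde{s}\in[s]$, $\tilde{s}'\in[s']$ with $\tilde{s}\access{M}{a}{}\tilde{s}'$. The Filtration Lemma, proved by induction on the structure of formulas in $\Sigma$ using the truth conditions in Table~\ref{table:SatisfactionRelation}, then yields $\mathfrak{M}_{\mathrm{fin}},[s]\models\chi$ iff $\mathfrak{M},s\models\chi$ for all $\chi\in\Sigma$; in particular $\mathfrak{M}_{\mathrm{fin}},[s]\models\phi$. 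The only non-routine part of this lemma is the modal case, and for the standard (necessity) reading it goes through provided the filtrated relation lies between the finest and coarsest admissible filtrations, which the definition above guarantees.

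The main obstacle, and the part deserving real care, is that $\mathfrak{M}_{\mathrm{fin}}$ must itself be a genuine \emph{LDiiP}-model, i.e. $\access{M}{a}{\mathrm{fin}}$ must again satisfy seriality/totality, determinism/functionality, conditional reflexivity, and epistemic image. Epistemic image and conditional reflexivity are comparatively benign: they are governed by the predefined valuation $\mathcal{V}(\knows{a}{M})=\setst{s}{M\in\clo{a}{s}(\emptyset)}$, so I would arrange for each formula $\knows{a}{M}$ with $(M,a)$ relevant to lie in $\Sigma$, which forces $\sim$-equivalent worlds to agree on membership in $\clo{a}{\cdot}(\emptyset)$ and lets both properties transfer. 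The delicate property is \textbf{functionality}: the naive existential filtration above can merge outgoing edges and destroy determinism, since distinct $\access{M}{a}{}$-successors of $\sim$-equivalent worlds may fall into different classes. To repair this I would exploit that in the original model the successor is \emph{unique} — by functionality there is a single $\sigma$-style successor world — and that by the K\emph{bis} and maximal-consistency laws of Theorem~\ref{theorem:ProofCompilability} the full Boolean behaviour of $\decides{M}{\cdot}{a}{}$ at $s$ is already determined by which literals hold at that unique successor. Concretely, I would show that whenever $s\sim s'$, their unique $\access{M}{a}{}$-successors $t,t'$ satisfy $t\sim t'$: this follows because for every $\chi\in\Sigma$ with $\decides{M}{\chi}{a}{}\in\Sigma$, functionality makes $\mathfrak{M},s\models\decides{M}{\chi}{a}{}$ equivalent to $\mathfrak{M},t\models\chi$, so agreement on the modal formulas at $s,s'$ (from $s\sim s'$) forces agreement on $\chi$ at $t,t'$. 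Hence the induced map on classes $[s]\mapsto[t]$ is well defined, single-valued, and total, giving functionality and seriality simultaneously. I would take this well-defined successor map as the definition of $\access{M}{a}{\mathrm{fin}}$, verify the Filtration Lemma against it, and thereby conclude that $\mathfrak{M}_{\mathrm{fin}}$ is a finite LDiiP-model satisfying $\phi$ at $[s]$, as required.
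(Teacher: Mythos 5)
Your overall strategy is the paper's own: filtrate the given model through the finite subformula closure of $\phi$ and then check that the filtrated relations still satisfy the four interface properties of Definition~\ref{definition:KripkeModel}. You also put your finger on the one genuinely delicate point, determinism/functionality, which the paper dispatches with the bare assertion that the minimal filtration ``inherits'' it. But your repair does not close the gap: the induced successor map on classes is \emph{not} well defined, and your argument for well-definedness proves strictly less than what is needed. You show that the unique successors $t,t'$ of $\sim$-equivalent states $s,s'$ agree on every $\chi\in\Sigma$ such that $(\decides{M}{\chi}{a}{})\in\Sigma$; but $t\sim t'$ requires agreement on \emph{all} of $\Sigma$, and subformula closure only goes downward, so $\chi\in\Sigma$ does not put $\decides{M}{\chi}{a}{}$ into $\Sigma$ (your enrichments by single negations and by the atoms $\knows{a}{M}$ do not change this). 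In the extreme case $\phi\defeq P$ atomic, $\Sigma=\set{P}$ contains no boxed formula at all, so your condition is vacuous, while a legitimate LDiiP-model can contain $s\sim s'$, both satisfying $P$, whose unique ${\access{M}{a}{}}$-successors disagree on $P$: take $\states\defeq\messages\times\set{0,1}$, let the knowledge base of $(N,i)$ be $\set{N,a}$, let $\mathcal{V}(P)$ be the bit-$1$ states, let each ${\access{N}{a}{}}$ map $(N,i)$ to itself and route two bit-$1$ states to $(M,1)$ and $(M,0)$ respectively; all of seriality, functionality, conditional reflexivity and epistemic image hold, yet the two successors lie in different classes. So the map you take as the definition of ${\access{M}{a}{\mathrm{fin}}}$ does not exist, and with it functionality of the filtrated model and your Filtration Lemma fail.

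To be fair, the same example shows that the paper's claim that ${\access{M}{a}{\mathrm{min},\Gamma}}$ inherits determinism ``by inspecting the definition'' is too quick, so your suspicion was exactly right; only the cure is wrong. What does work is to give up representative-independence instead of trying to prove it: for each class $C$ and each relevant pair $(M,a)$, \emph{choose} one representative $s_{C,M,a}\in C$, preferring one with $M\in\clo{a}{s_{C,M,a}}(\emptyset)$ whenever $C$ meets $\mathcal{V}(\knows{a}{M})$, and decree the class of the unique ${\access{M}{a}{}}$-successor of $s_{C,M,a}$ to be the \emph{only} ${\access{M}{a}{\mathrm{fin}}}$-successor of $C$. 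Totality and functionality then hold by construction; the preferred choice yields conditional reflexivity (that representative is its own successor); epistemic image transfers exactly as in the paper; and the Truth Lemma for formulas in $\Sigma$ can be proved directly by induction, since both directions of the modal case use only that $s\sim s_{[s],M,a}$, that the boxed formula under consideration lies in $\Sigma$, and seriality plus functionality of the original relation. That produces a finite LDiiP-model satisfying $\phi$, which is what Theorem~\ref{theorem:FiniteModelProperty} requires.
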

\begin{proof}
	By the fact that 
		the \emph{minimal filtration} \cite{ModalModelTheory}
			$$\mathfrak{M}_{\mathrm{flt}}^{\mathrm{min},\Gamma}\defeq
				(\states/_{\sim_{\Gamma}},
					\set{\access{M}{a}{\mathrm{min},\Gamma}}_{M\in\messages,a\in\agents},\mathcal{V}_{\Gamma})$$ of 
			any LDiiP-model $\mathfrak{M}\defeq(\states,\set{\access{M}{a}{}}_{M\in\messages,a\in\agents},\mathcal{V})$ 
			through a finite $\Gamma\subseteq\pFormulas$ is a finite LDiiP-model such that 
				for all $\gamma\in\Gamma$, 
					$\mathfrak{M},s\models\gamma$ if and only if 
					$\mathfrak{M}_{\mathrm{flt}}^{\mathrm{min},\Gamma},[s]_{\sim_{\Gamma}}\models\gamma$.
			Following \cite{ModalModelTheory} for our setting, 
				we define 
				\begin{eqnarray*}
					{\sim_{\Gamma}}&\defeq&
						\setst{(s,s')\in\states\times\states}{\text{for all $\gamma\in\Gamma$, 
							$\mathfrak{M},s\models\gamma$ iff $\mathfrak{M},s'\models\gamma$}}\\ 
					{\access{M}{a}{\mathrm{min},\Gamma}} &\defeq&
						\setst{([s]_{\sim_{\Gamma}},[s']_{\sim_{\Gamma}})}{(s,s')\in{\access{M}{a}{}}}\\
					\mathcal{V}_{\Gamma}(P)&\defeq&
							\setst{[s]_{\sim_{\Gamma}}}{s\in\mathcal{V}(P)}\,.
				\end{eqnarray*}
			We further fix 
				$M\in\clo{a}{[s]_{\sim_{\Gamma}}}(\emptyset)$ :iff 
				$[s]_{\sim_{\Gamma}}\in\mathcal{V}_{\Gamma}(\knows{a}{M})$, and
				choose $\Gamma$ to be the (finite) sub-formula closure of $\phi$.
			Hence, we are left to prove that 
				$\mathfrak{M}_{\mathrm{flt}}^{\mathrm{min},\Gamma}$ is indeed an LDiiP-model, which
				means that we are left to prove that ${\access{M}{a}{\mathrm{min},\Gamma}}$ has
					all the properties stipulated by the semantic interface of LDiiP:
			\begin{itemize}
				\item ${\access{M}{a}{\mathrm{min},\Gamma}}$ inherits 
					seriality/totality as well as determinism/functionality from ${\access{M}{a}{}}$, 
						as can be seen by inspecting the definition of ${\access{M}{a}{\mathrm{min},\Gamma}}$;
				\item for conditional reflexivity, suppose that
						$M\in\clo{a}{[s]_{\sim_{\Gamma}}}(\emptyset)$.
						Thus consecutively: 
							$[s]_{\sim_{\Gamma}}\in\mathcal{V}_{\Gamma}(\knows{a}{M})$ by definition,
							$s\in\mathcal{V}(\knows{a}{M})$ by definition,
							$M\in\clo{a}{s}(\emptyset)$ by definition,
							$s\access{M}{a}{}s$ by the conditional reflexivity of $\access{M}{a}{}$, and finally 
							$[s]_{\sim_{\Gamma}}\access{M}{a}{\mathrm{min},\Gamma}[s]_{\sim_{\Gamma}}$ by 
								definition;
				\item for the epistemic-image property, suppose that
						$[s]_{\sim_{\Gamma}}\access{M}{a}{\mathrm{min},\Gamma}[s']_{\sim_{\Gamma}}$.
						Thus consecutively: 
							$s\access{M}{a}{}s'$ by definition,
							$M\in\clo{a}{s'}(\emptyset)$ by the epistemic-image property of $\access{M}{a}{}$,
							$s'\in\mathcal{V}(\knows{a}{M})$ by definition,
							$[s']_{\sim_{\Gamma}}\in\mathcal{V}_{\Gamma}(\knows{a}{M})$ by definition, and finally
							$M\in\clo{a}{[s']_{\sim_{\Gamma}}}(\emptyset)$ by definition.
			\end{itemize}
\end{proof}

\begin{corollary}[Algorithmic decidability]\label{corollary:AlgorithmicDecidability}
	If the sub-theory generated by $\Gamma_{1}$ (\cf Definition~\ref{definition:AxiomsRules}) is algorithmically decidable 
	then LDiiP (over $\Gamma_{1}$) is so too.
\end{corollary}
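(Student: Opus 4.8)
The plan is to run the standard two--semidecision--procedure argument (a set whose membership and non-membership problems are both recursively enumerable is decidable), using the finite--model property (Theorem~\ref{theorem:FiniteModelProperty}) together with axiomatic adequacy (Theorem~\ref{theorem:Adequacy}) to supply the two enumerations. Throughout, the decidability hypothesis on the sub-theory generated by $\Gamma_{1}$ is exactly what makes both procedures effective; the modal layer of LDiiP (self-knowledge, K, epistemic truthfulness, proof consistency, negation completeness) is given by finitely many schemas and contributes nothing undecidable.

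First I would show that the set of LDiiP-theorems is recursively enumerable. By Proposition~\ref{proposition:Hilbert}, $\LDiiPded$ is induced by the compact (hence finitary) closure operator $\Clo{}{}$, i.e.\ by a finitary Hilbert-style proof system. Its axiom set $\Gamma_{2}=\Gamma_{0}\cup\Gamma_{1}\cup\{\ldots\}$ consists of the decidable propositional schemas $\Gamma_{0}$, the five decidable modal schemas, and $\Gamma_{1}$; under the hypothesis that the $\Gamma_{1}$-sub-theory is decidable, membership in the full axiom set is decidable, so finite derivations (via MP and N) are effectively checkable and the theorems can be enumerated by dovetailing over all candidate derivations. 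Hence ``$\LDiiPded\phi$'' is semi-decidable.

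Next I would show that the set of non-theorems is recursively enumerable. By soundness and completeness (Theorem~\ref{theorem:Adequacy}), $\LDiiPded\phi$ iff $\models\phi$, so $\phi$ is a non-theorem iff it fails at some state of some LDiiP-model; by the finite-model property (Theorem~\ref{theorem:FiniteModelProperty}), concretely the minimal filtration through the finite sub-formula closure $\Gamma$ of $\phi$, it then fails at a state of a finite LDiiP-model of size bounded by $2^{|\Gamma|}$. I would therefore enumerate all finite candidate structures $(\states,\set{\access{M}{a}{}}_{M,a},\mathcal{V})$ of increasing size over the finitely many agents $\agents$ and the finitely many messages $M$ occurring in $\phi$, and for each effectively check (a) that it is a genuine LDiiP-model and (b) that $\phi$ fails at some state. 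Check (b) is a finite model-checking computation following Table~\ref{table:SatisfactionRelation}. Check (a) amounts to verifying the four semantic-interface conditions (seriality/totality, functionality, conditional reflexivity, epistemic image), which are finite relational checks once the knowledge predicate is read off via $M\in\clo{a}{s}(\emptyset)$ iff $s\in\mathcal{V}(\knows{a}{M})$ exactly as in the filtration, \emph{plus} verifying that the $\Gamma_{1}$-axioms hold; it is precisely here that decidability of the $\Gamma_{1}$-sub-theory is invoked. If $\phi$ is a non-theorem this search terminates with a countermodel, so ``$\not\LDiiPded\phi$'' is semi-decidable, and running the two procedures in parallel decides $\LDiiP$.

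The main obstacle I anticipate is making check (a) genuinely effective, and two points need care. First, the message language $\messages$ may be infinite, so one must restrict to the finite set of messages appearing in $\phi$ (and in the relevant axiom instances), exactly as the filtration does, so that only finitely many knowledge atoms $\knows{a}{M}$ are live and the valuation is a finite object. Second, $\Gamma_{1}$ may be an infinite schema, so ``the $\Gamma_{1}$-axioms hold in this finite structure'' cannot be verified instance-by-instance naively; this is where the hypothesis that the sub-theory generated by $\Gamma_{1}$ is algorithmically decidable does the real work, reducing admissibility of a candidate model to finitely many decidable $\Gamma_{1}$-consequence queries over the finitely many live atoms. Everything else — the finitary proof system, the satisfaction clauses, and the size bound from filtration — is routine.
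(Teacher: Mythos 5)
Your proposal is correct in substance but structured differently from the paper's proof. The paper gives a \emph{single, terminating} decision procedure: by adequacy (Theorem~\ref{theorem:Adequacy}), $\LDiiPded\phi$ fails iff $\neg\phi$ is satisfiable, and by the filtration bound in Theorem~\ref{theorem:FiniteModelProperty} it suffices to enumerate the finite LDiiP-models with at most $2^{n}$ states ($n$ the size of the sub-formula closure of $\neg\phi$) and model-check $\neg\phi$ in each, the $\Gamma_{1}$-hypothesis entering only to make the checks ``$M\in\clo{a}{s}(\emptyset)$'' effective. You instead run the classical Post-style argument: theorems are recursively enumerable via Hilbert derivations (Proposition~\ref{proposition:Hilbert}), non-theorems are recursively enumerable via countermodel search, and the two semi-procedures in parallel decide the logic. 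Both routes rest on the same two theorems; yours additionally needs the effective axiomatizability of the proof system but not the explicit size bound, while the paper's exploits the bound and never touches the proof system. Note that since you yourself observe the $2^{|\Gamma|}$ bound, capping your countermodel search makes it terminate on \emph{every} input, which renders your entire proof-enumeration half redundant and collapses your argument into the paper's.

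Two caveats. First, in the proof-enumeration half you claim that decidability of the sub-theory generated by $\Gamma_{1}$ makes ``membership in the full axiom set decidable''; this does not follow, since $\Gamma_{1}$ itself need not be decidable. The repair is standard and harmless: take as axioms the (decidable) generated sub-theory itself in place of $\Gamma_{1}$, which leaves the set of theorems unchanged. Second, your check (a) --- that a finite candidate structure over the finitely many live messages genuinely is (the reduct of) an LDiiP-model, i.e.\ extends to all of $\messages$ compatibly with $\Gamma_{1}$ --- is the delicate point, and your ``finitely many $\Gamma_{1}$-consequence queries'' remark is a promissory note rather than an argument, since $\Gamma_{1}$ is generic and may link live to non-live knowledge atoms. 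To be fair, the paper's own proof is equally terse here, simply positing that ``$M\in\clo{a}{s}(\emptyset)$ is decidable'' and that the finite LDiiP-models can be enumerated; so this is a shared imprecision, not a defect specific to your route.
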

\begin{proof}
	In order to algorithmically decide whether or not $\phi\in\LDiiP$ (that is, $\LDiiPded\phi$), 
		axiomatic adequacy allows us to check whether or not $\neg\phi$ is locally satisfiable (that is,
			whether or not $\mathfrak{M},s\models\neg\phi$ for 
				some LDiiP-model $\mathfrak{M}$ and state $s$;
					by assumption, $M\in\clo{a}{s}(\emptyset)$, modelling membership in the theory generated by $\Gamma_{1}$, is decidable.).
	But then, the finite-model property of LDiiP allows us 
		to enumerate all finite LDiiP-models $\mathfrak{M}_{\mathrm{fin}}$ up to a size of at most 2 to the power 
			of the size $n$ of the sub-formula closure of $\neg\phi$ and
		to check whether or not $\mathfrak{M}_{\mathrm{fin}},s\models\neg\phi$.
	(There are at most $2^n$ equivalence classes for $n$ formulas.)
\end{proof}
So in some sense,
	we have proved the algorithmic decidability of the epistemic decisiveness of the evidence terms in LDiiP.
Note that 
	the algorithmic complexity of LDiiP will depend on the specific choice of 
		$\Gamma_{1}$ in Definition~\ref{definition:AxiomsRules}.

\section{Conclusion}
We have produced LDiiP from LiiP with 
		as main contributions those described in Section~\ref{section:Contribution}. 
In future work,
	we shall work out 
	dynamic and first-order extensions of LDiiP as well as 
the preliminary case study \cite{MMFAAMAS} mentioned in Section~\ref{section:PracticalApplication}.

\bibliographystyle{alpha}

\begin{thebibliography}{FHMV95}

\bibitem[BvB07]{ModalLogicSemanticPerspective}
P.~Blackburn and J.~van Benthem.
\newblock {\em Handbook of Modal Logic}, chapter Modal Logic: A Semantic
  Perspective.
\newblock Volume~3 of Blackburn et~al. \cite{HBModalLogic}, 2007.

\bibitem[BvBW07]{HBModalLogic}
P.~Blackburn, J.~van Benthem, and F.~Wolter, editors.
\newblock {\em Handbook of Modal Logic}, volume~3 of {\em Studies in Logic and
  Practical Reasoning}.
\newblock Elsevier, 2007.

\bibitem[Chu09]{Logs}
A.~Chuvakin.
\newblock {\em Beautiful Security: Leading Security Experts Explain How They
  Think}, chapter Beautiful Log Handling.
\newblock O'Reilly, 2009.

\bibitem[Cla78]{NegationAsFailure}
K.L. Clark.
\newblock {\em Logic and Databases}, chapter Negation As Failure.
\newblock Plenum Press, 1978.

\bibitem[CZ91]{DisjunctionProperty}
A.~Chagrov and M.~Zakharyashchev.
\newblock The disjunction property of intermediate propositional logics.
\newblock {\em Studia Logica}, 50(2), 1991.

\bibitem[DP02]{DaveyPriestley}
B.A. Davey and H.A. Priestley.
\newblock {\em Introduction to Lattices and Order}.
\newblock Cambridge University Press, 1990 (2002).

\bibitem[Fef89]{TheNumberSystems}
S.~Feferman.
\newblock {\em The Number Systems: Foundations of Algebra and Analysis}.
\newblock AMS Chelsea Publishing, second edition, 1964 (1989).
\newblock Reprinted by the American Mathematical Society, 2003.

\bibitem[FHMV95]{Epistemic_Logic}
R.~Fagin, J.Y. Halpern, Y.~Moses, and M.Y. Vardi.
\newblock {\em Reasoning about Knowledge}.
\newblock MIT Press, 1995.

\bibitem[Fit07a]{ModalProofTheory}
M.~Fitting.
\newblock {\em Handbook of Modal Logic}, chapter Modal Proof Theory.
\newblock Volume~3 of Blackburn et~al. \cite{HBModalLogic}, 2007.

\bibitem[Fit07b]{IncompletenessSets}
M.~Fitting.
\newblock {\em Incompleteness in the Land of Sets}, volume~5 of {\em Studies in
  Logic}.
\newblock College Publications, 2007.

\bibitem[Fra05]{FranzenI}
T.~Franz\'{e}n.
\newblock {\em G\"{o}del's Theorem: An Incomplete Guide To Its Use and Abuse}.
\newblock A K Peters, Ltd., 2005.

\bibitem[Gab95]{WhatIsALogicalSystem}
D.M. Gabbay, editor.
\newblock {\em What Is a Logical System?}
\newblock Number~4 in Studies in Logic and Computation. Oxford University
  Press, 1995.

\bibitem[GO07]{ModalModelTheory}
V.~Goranko and M.~Otto.
\newblock {\em Handbook of Modal Logic}, chapter Model Theory of Modal Logic.
\newblock Volume~3 of Blackburn et~al. \cite{HBModalLogic}, 2007.

\bibitem[HR10]{EpistemicLogicFiveQuestions}
V.F. Hendricks and O.~Roy, editors.
\newblock {\em Epistemic Logic: 5 Questions}.
\newblock Automatic Press, 2010.

\bibitem[KR10]{MMFAAMAS}
S.~Kramer and A.~Rybalchenko.
\newblock A multi-modal framework for achieving accountability in multi-agent
  systems.
\newblock In {\em Proceedings of the ESSLLI-affiliated Workshop on Logics in
  Security}, 2010.
\newblock \url{http://www.simon-kramer.ch/papers/ESSLLI-10_proceedings.pdf}.

\bibitem[KR11]{MMMSRTETEVVVS}
S.~Kramer and P.Y.A. Ryan.
\newblock A modular multi-modal specification of real-timed, end-to-end
  voter-verifiable voting systems.
\newblock In {\em Proceedings of the RE-affiliated Workshop on Requirements
  Engineering for Electronic Voting Systems}. {IEEE}, 2011.
\newblock
  \url{http://ieeexplore.ieee.org/xpl/articleDetails.jsp?arnumber=6045911}.

\bibitem[Kra12a]{LiP}
S.~Kramer.
\newblock A logic of interactive proofs (formal theory of knowledge transfer).
\newblock Technical Report 1201.3667, arXiv, 2012.
\newblock \url{http://arxiv.org/abs/1201.3667}.


\bibitem[Kra12c]{LiiP}
S.~Kramer.
\newblock Logic of non-monotonic interactive proofs (formal theory of temporary
  knowledge transfer).
\newblock Technical Report 1208.1842, arXiv, 2012.
\newblock \url{http://arxiv.org/abs/1208.1842}.

\bibitem[Kra13]{KramerICLA2013}
S.~Kramer.
\newblock Logic of non-monotonic interactive proofs.
\newblock In {\em Proceedings of {ICLA}}, volume 7750 of {\em LNCS}. Springer,
  2013.
\newblock \url{http://link.springer.com/chapter/10.1007/978-3-642-36039-8_16}.

\bibitem[Lan09]{AccountableInternet}
C.E. Landwehr.
\newblock A national goal for cyberspace: Create an open, accountable
  {Internet}.
\newblock {\em {IEEE} Security \& Privacy}, 7(3), 2009.

\bibitem[LdR04]{LogicAndComplexity}
R.~Lassaigne and M.~de~Rougemont.
\newblock {\em Logic and Complexity}.
\newblock Springer, 2004.

\bibitem[Mos06]{NotesOnSetTheory}
Y.~Moschovakis.
\newblock {\em Notes on Set Theory}.
\newblock Springer, 2nd edition, 2006.

\bibitem[MV07]{MultiAgents}
J.-J. Meyer and F.~Veltman.
\newblock {\em Handbook of Modal Logic}, chapter Intelligent Agents and Common
  Sense Reasoning.
\newblock Volume~3 of Blackburn et~al. \cite{HBModalLogic}, 2007.

\bibitem[Par91]{NonMonotonicKnowledge}
R.~Parikh.
\newblock Monotonic and non-monotonic logics of knowledge.
\newblock {\em Fundamenta Informaticae}, 15(3--4), 1991.

\bibitem[PP06]{SocialSoftwareBis}
E.~Pacuit and R.~Parikh.
\newblock {\em Interactive Computation: The New Paradigm}, chapter Social
  Interaction, Knowledge, and Social Software.
\newblock Springer, 2006.

\bibitem[Rei78]{OnClosedWorldDatabases}
R.~Reiter.
\newblock {\em Logic and Databases}, chapter On Closed World Databases.
\newblock Plenum Press, 1978.

\bibitem[Tay99]{PracticalFoundationsOfMathematics}
P.~Taylor.
\newblock {\em Practical Foundations of Mathematics}.
\newblock Cambridge University Press, 1999.

\bibitem[Tro98]{Realizability}
A.S. Troelstra.
\newblock {\em Handbook of Proof Theory}, volume 137 of {\em Studies in Logic
  and the Foundations of Mathematics}, chapter Realizability.
\newblock Elsevier, 1998.

\bibitem[TvD88]{ConstructiveLogic}
A.S. Troelstra and D.~van Dalen, editors.
\newblock {\em Constructivism in Mathematics---An Introduction}, volume 121 of
  {\em Studies in Logic and the Foundations of Mathematics}, chapter Logic.
\newblock Elsevier, 1988.

\bibitem[Ven07]{AlgebrasAndCoalgebras}
Y.~Venema.
\newblock {\em Handbook of Modal Logic}, chapter Algebras and Coalgebras.
\newblock Volume~3 of Blackburn et~al. \cite{HBModalLogic}, 2007.

\end{thebibliography}

\appendix

\section{Remaining proofs}\label{appendix:Proofs}
\subsection{Proof of Theorem~\ref{theorem:NCvsDP}}\label{appendix:NCvsDP}
	Suppose that 
		$\mathrm{T}$ is a classical logical theory with language $\mathcal{L}$ 
			(\ie for all $\phi\in\mathcal{L}$, $\vdash_{\mathrm{T}}\phi\lor\neg\phi$).
	%
	%
	%
	%
	%
	\begin{itemize}
		\item For the if-direction,
		suppose that 
			for all $\phi\in\mathcal{L}$, 
				$\vdash_{\mathrm{T}}\phi$ or $\vdash_{\mathrm{T}}\neg\phi$, and
		let $\phi,\phi'\in\mathcal{L}$.
	Thus $\vdash_{\mathrm{T}}\phi$ or $\vdash_{\mathrm{T}}\neg\phi$.
	Let us proceed by case analysis of this disjunction: 
	\begin{itemize}
		\item So first suppose that $\vdash_{\mathrm{T}}\phi$. 
				Hence $\vdash_{\mathrm{T}}\phi$ or $\vdash_{\mathrm{T}}\phi'$ 
					(from \underline{A} infer \underline{A or B}), and
				thus $\vdash_{\mathrm{T}}\phi\lor\phi'$ (vacously) implies 
					$\vdash_{\mathrm{T}}\phi$ or $\vdash_{\mathrm{T}}\phi'$
						(from \underline{A or B} infer \underline{C implies A or B}).
		\item Now suppose that $\vdash_{\mathrm{T}}\neg\phi$. 
	%
	%
	%
	Further suppose that $\vdash_{\mathrm{T}}\phi\lor\phi'$ (that is, \underline{C}).
	Hence $\vdash_{\mathrm{T}}\phi'$ (that is, \underline{B}), and 
	thus $\vdash_{\mathrm{T}}\phi$ or $\vdash_{\mathrm{T}}\phi'$
		(from \underline{B} infer \underline{A or B}).
	(Thus inferring \underline{C implies A or B}.)
	\end{itemize}
		\item For the only-if direction, 
		suppose that 
			for all $\phi,\phi'\in\mathcal{L}$, 
				$\vdash_{\mathrm{T}}\phi\lor\phi'$ implies 
					$\vdash_{\mathrm{T}}\phi$ or $\vdash_{\mathrm{T}}\phi'$, and 
		let $\phi\in\mathcal{L}$.
	Hence $\vdash_{\mathrm{T}}\phi\lor\neg\phi$ implies 
					$\vdash_{\mathrm{T}}\phi$ or $\vdash_{\mathrm{T}}\neg\phi$ 
						(particularising the universally quantified $\phi'$ with $\neg\phi$).
	Hence $\vdash_{\mathrm{T}}\phi$ or $\vdash_{\mathrm{T}}\neg\phi$, since 
		we have initially supposed $\mathrm{T}$ to be classical. 
	\end{itemize}
	(See also \cite{DaveyPriestley}.)

\subsection{Proof of Lemma~\ref{lemma:PRD}}\label{appendix:ProofLemma}
	\begin{enumerate}
		\item \begin{enumerate}
				\item $\LDiiPded(\decides{M}{\phi}{a}{})\limp(\knows{a}{M}\limp\phi)$\hfill epistemic truthfulness
				\item $\LDiiPded\knows{a}{M}\limp((\decides{M}{\phi}{a}{})\limp\phi)$\hfill a, PL
				\item $\LDiiPded(\decides{M}{(\knows{a}{M})}{a}{})\limp
						\decides{M}{((\decides{M}{\phi}{a}{})\limp\phi)}{a}{}$\hfill b, regularity
				\item $\LDiiPded\decides{M}{\knows{a}{M}}{a}{}$\hfill self-knowledge
				\item $\LDiiPded\decides{M}{((\decides{M}{\phi}{a}{})\limp\phi)}{a}{}$\hfill c, d, PL.
		\end{enumerate}
		\item \begin{enumerate}
				\item $\LDiiPded\decides{M}{((\decides{M}{\phi}{a}{})\limp\phi)}{a}{}$\hfill 
						Lemma~\ref{lemma:PRD}.1
				\item $\LDiiPded(\decides{M}{((\decides{M}{\phi}{a}{})\limp\phi)}{a}{})\limp
						((\decides{M}{(\decides{M}{\phi}{a}{})}{a}{})\limp\decides{M}{\phi}{a}{})$\hfill K
				\item $\LDiiPded(\decides{M}{(\decides{M}{\phi}{a}{})}{a}{})\limp\decides{M}{\phi}{a}{}$\hfill a, b, PL.
			\end{enumerate}
	\end{enumerate}

\subsection{Proof of Theorem~\ref{theorem:ProofCompilability}}\label{appendix:ProofTermsAsTruthValues}
	\begin{enumerate}
		\item 	
			\begin{enumerate}
				\item $\LDiiPded\neg(\decides{M}{\false}{a}{})$\hfill proof consistency
				\item $\LDiiPded\neg(\decides{M}{\false}{a}{})\lequiv
		((\decides{M}{\phi}{a}{})\limp\neg(\decides{M}{\neg\phi}{a}{}))$\hfill Fact~\ref{fact:ProofConsistency}
				\item $\LDiiPded(\decides{M}{\phi}{a}{})\limp\neg(\decides{M}{\neg\phi}{a}{})$\hfill a, b, PL
				\item $\LDiiPded(\decides{M}{\neg\phi}{a}{})\limp\neg(\decides{M}{\phi}{a}{})$\hfill c, PL
				\item $\LDiiPded(\decides{M}{\phi}{a}{})\lor\decides{M}{\neg\phi}{a}{}$\hfill negation completeness 
				\item $\LDiiPded\neg(\decides{M}{\phi}{a}{})\limp\decides{M}{\neg\phi}{a}{}$\hfill e, PL
				\item $\LDiiPded(\decides{M}{\neg\phi}{a}{})\lequiv\neg(\decides{M}{\phi}{a}{})$\hfill d, f, PL.
			\end{enumerate}
		\item \begin{enumerate}
				\item $\LDiiPded\phi\limp(\phi'\limp(\phi\land\phi'))$\hfill tautology
				\item $\LDiiPded(\decides{M}{\phi}{a}{})\limp\decides{M}{(\phi'\limp(\phi\land\phi'))}{a}{}$\hfill a, regularity
				\item $\LDiiPded(\decides{M}{(\phi'\limp(\phi\land\phi'))}{a}{})\limp
									((\decides{M}{\phi'}{a}{})\limp\decides{M}{(\phi\land\phi'))}{a}{}$\hfill K
				\item $\LDiiPded(\decides{M}{\phi}{a}{})\limp((\decides{M}{\phi'}{a}{})\limp\decides{M}{(\phi\land\phi'))}{a}{}$\hfill b, c, PL
				\item $\LDiiPded((\decides{M}{\phi}{a}{})\land\decides{M}{\phi'}{a}{})\limp\decides{M}{(\phi\land\phi')}{a}{}$\hfill d, PL
				\item $\LDiiPded(\phi\land\phi')\limp\phi$\hfill tautology
				\item $\LDiiPded(\decides{M}{(\phi\land\phi')}{a}{})\limp\decides{M}{\phi}{a}{}$\hfill f, regularity
				\item $\LDiiPded(\phi\land\phi')\limp\phi'$\hfill tautology
				\item $\LDiiPded(\decides{M}{(\phi\land\phi')}{a}{})\limp\decides{M}{\phi'}{a}{}$\hfill h, regularity
				\item $\LDiiPded(\decides{M}{(\phi\land\phi')}{a}{})\limp
									((\decides{M}{\phi}{a}{})\land\decides{M}{\phi'}{a}{})$\hfill g, i, PL
				\item $\LDiiPded((\decides{M}{\phi}{a}{})\land\decides{M}{\phi'}{a}{})\lequiv\decides{M}{(\phi\land\phi')}{a}{}$\hfill e, j, PL.
			\end{enumerate}	
		\item \begin{enumerate}
				\item $\LDiiPded(\decides{M}{(\phi\lor\phi')}{a}{})\lequiv
						\decides{M}{\neg(\neg\phi\land\neg\phi')}{a}{}$\hfill definition
				\item $\LDiiPded(\decides{M}{\neg(\neg\phi\land\neg\phi')}{a}{})\lequiv
						\neg(\decides{M}{(\neg\phi\land\neg\phi')}{a}{})$\hfill Theorem~\ref{theorem:ProofCompilability}.1
				\item $\LDiiPded(\decides{M}{(\phi\lor\phi')}{a}{})\lequiv
							\neg(\decides{M}{(\neg\phi\land\neg\phi')}{a}{})$\hfill a, b, PL
				\item $\LDiiPded(\decides{M}{(\neg\phi\land\neg\phi')}{a}{})\lequiv
						((\decides{M}{\neg\phi}{a}{})\land
							 \decides{M}{\neg\phi'}{a}{})$\hfill Theorem~\ref{theorem:ProofCompilability}.2
				\item $\LDiiPded\neg(\decides{M}{(\neg\phi\land\neg\phi')}{a}{})\lequiv
						\neg((\decides{M}{\neg\phi}{a}{})\land
							 \decides{M}{\neg\phi'}{a}{})$\hfill d, PL
				\item $\LDiiPded(\decides{M}{(\phi\lor\phi')}{a}{})\lequiv
							\neg((\decides{M}{\neg\phi}{a}{})\land
							 \decides{M}{\neg\phi'}{a}{})$\hfill c, e, PL
				\item $\LDiiPded\neg((\decides{M}{\neg\phi}{a}{})\land
							 \decides{M}{\neg\phi'}{a}{})\lequiv
							 (\neg(\decides{M}{\neg\phi}{a}{})\lor
							 \neg(\decides{M}{\neg\phi'}{a}{}))$\hfill PL
				\item $\LDiiPded(\decides{M}{(\phi\lor\phi')}{a}{})\lequiv
							(\neg(\decides{M}{\neg\phi}{a}{})\lor
							 \neg(\decides{M}{\neg\phi'}{a}{}))$\hfill f, g, PL
				\item $\LDiiPded(\decides{M}{\neg\phi}{a}{})\lequiv
						\neg(\decides{M}{\phi}{a}{})$\hfill Theorem~\ref{theorem:ProofCompilability}.1
				\item $\LDiiPded\neg(\decides{M}{\neg\phi}{a}{})\lequiv
						(\decides{M}{\phi}{a}{})$\hfill i, PL
				\item $\LDiiPded(\decides{M}{\neg\phi'}{a}{})\lequiv
						\neg(\decides{M}{\phi'}{a}{})$\hfill Theorem~\ref{theorem:ProofCompilability}.1
				\item $\LDiiPded\neg(\decides{M}{\neg\phi'}{a}{})\lequiv
						(\decides{M}{\phi'}{a}{})$\hfill k, PL
				\item $\LDiiPded(\decides{M}{(\phi\lor\phi')}{a}{})\lequiv
							((\decides{M}{\phi}{a}{})\lor
							 \decides{M}{\phi'}{a}{})$\hfill h, j, l, PL.
			\end{enumerate}
		\item \begin{enumerate}
				\item $\LDiiPded((\decides{M}{\phi}{a}{})\limp\decides{M}{\phi'}{a}{})\lequiv(\neg(\decides{M}{\phi}{a}{})\lor\decides{M}{\phi'}{a}{})$\hfill definition
				\item $\LDiiPded(\decides{M}{\neg\phi}{a}{})\lequiv
						\neg(\decides{M}{\phi}{a}{})$\hfill Theorem~\ref{theorem:ProofCompilability}.1
				\item $\LDiiPded((\decides{M}{\phi}{a}{})\limp\decides{M}{\phi'}{a}{})\lequiv((\decides{M}{\neg\phi}{a}{})\lor\decides{M}{\phi'}{a}{})$\hfill a, b, PL
				\item $\LDiiPded(\decides{M}{(\neg\phi\lor\phi')}{a}{})\lequiv
							((\decides{M}{\neg\phi}{a}{})\lor
							 \decides{M}{\phi'}{a}{})$\hfill Theorem~\ref{theorem:ProofCompilability}.3
				\item $\LDiiPded((\decides{M}{\phi}{a}{})\limp\decides{M}{\phi'}{a}{})\lequiv\decides{M}{(\neg\phi\lor\phi')}{a}{}$\hfill c, d, PL 
				\item $\LDiiPded((\decides{M}{\phi}{a}{})\limp\decides{M}{\phi'}{a}{})\lequiv\decides{M}{(\phi\limp\phi')}{a}{}$\hfill e, definition. 
				\end{enumerate}
			\item by Theorem~\ref{theorem:ProofCompilability}.2 and \ref{theorem:ProofCompilability}.4. 
			\item \begin{enumerate}
					\item $\LDiiPded(\decides{M}{(\decides{M}{\phi}{a}{})}{a}{})\limp\decides{M}{\phi}{a}{}$\hfill Lemma~\ref{lemma:PRD}.2
					\item $\LDiiPded(\decides{M}{(\decides{M}{\neg\phi}{a}{})}{a}{})\limp\decides{M}{\neg\phi}{a}{}$\hfill Lemma~\ref{lemma:PRD}.2
					\item $\LDiiPded\neg(\decides{M}{\neg\phi}{a}{})\limp\neg
									(\decides{M}{(\decides{M}{\neg\phi}{a}{})}{a}{})$\hfill b, PL
					\item $\LDiiPded(\decides{M}{\neg\phi}{a}{})\lequiv
								\neg(\decides{M}{\phi}{a}{})$\hfill Theorem~\ref{theorem:ProofCompilability}.1
					\item $\LDiiPded\neg(\decides{M}{\neg\phi}{a}{})\lequiv
								(\decides{M}{\phi}{a}{})$\hfill d, PL
					\item $\LDiiPded(\decides{M}{\phi}{a}{})\limp\neg
									(\decides{M}{(\decides{M}{\neg\phi}{a}{})}{a}{})$\hfill c, e, PL
					\item $\LDiiPded(\decides{M}{(\decides{M}{\neg\phi}{a}{})}{a}{})\lequiv
								\decides{M}{\neg(\decides{M}{\phi}{a}{})}{a}{}$\hfill d, regularity
					\item $\LDiiPded\neg(\decides{M}{(\decides{M}{\neg\phi}{a}{})}{a}{})\lequiv
								\neg(\decides{M}{\neg(\decides{M}{\phi}{a}{})}{a}{})$\hfill g, PL
					\item $\LDiiPded(\decides{M}{\phi}{a}{})\limp\neg
									(\decides{M}{\neg(\decides{M}{\phi}{a}{})}{a}{})$\hfill f, h, PL
					\item $\LDiiPded(\decides{M}{\neg(\decides{M}{\phi}{a}{})}{a}{})\lequiv
								\neg(\decides{M}{(\decides{M}{\phi}{a}{})}{a}{})$\hfill Theorem~\ref{theorem:ProofCompilability}.1
					\item $\LDiiPded\neg(\decides{M}{\neg(\decides{M}{\phi}{a}{})}{a}{})\lequiv
								\decides{M}{(\decides{M}{\phi}{a}{})}{a}{}$\hfill j, PL
					\item $\LDiiPded(\decides{M}{\phi}{a}{})\limp
									\decides{M}{(\decides{M}{\phi}{a}{})}{a}{}$\hfill i, k, PL; (proof transitivity)
					\item $\LDiiPded(\decides{M}{(\decides{M}{\phi}{a}{})}{a}{})\lequiv
									\decides{M}{\phi}{a}{}$\hfill a, l, PL.
				\end{enumerate}
		\item 	\begin{enumerate}
					\item $\LDiiPded\knows{b}{M}\limp 
						((\decides{M}{(\decides{M}{\phi}{a}{})}{b}{})\limp\decides{M}{\phi}{a}{})$\hfill
							epistemic truthfulness, PL
					\item $\LDiiPded\knows{b}{M}\limp 
						((\decides{M}{(\decides{M}{\neg\phi}{a}{})}{b}{})\limp\decides{M}{\neg\phi}{a}{})$\hfill
							dito a
					\item $\LDiiPded\knows{b}{M}\limp 
						(\neg(\decides{M}{\neg\phi}{a}{})\limp\neg(\decides{M}{(\decides{M}{\neg\phi}{a}{})}{b}{}))$\hfill b, PL
					\item $\LDiiPded(\decides{M}{\neg\phi}{a}{})\lequiv
								\neg(\decides{M}{\phi}{a}{})$\hfill Theorem~\ref{theorem:ProofCompilability}.1
					\item $\LDiiPded\neg(\decides{M}{\neg\phi}{a}{})\lequiv
								(\decides{M}{\phi}{a}{})$\hfill d, PL
					\item $\LDiiPded\knows{b}{M}\limp 
						((\decides{M}{\phi}{a}{})\limp\neg(\decides{M}{(\decides{M}{\neg\phi}{a}{})}{b}{}))$\hfill c, e, PL
					\item $\LDiiPded(\decides{M}{(\decides{M}{\neg\phi}{a}{})}{b}{})\lequiv
								\decides{M}{\neg(\decides{M}{\phi}{a}{})}{b}{}$\hfill d, regularity
					\item $\LDiiPded\neg(\decides{M}{(\decides{M}{\neg\phi}{a}{})}{b}{})\lequiv
								\neg(\decides{M}{\neg(\decides{M}{\phi}{a}{})}{b}{})$\hfill g, PL
					\item $\LDiiPded\knows{b}{M}\limp 
						((\decides{M}{\phi}{a}{})\limp\neg(\decides{M}{\neg(\decides{M}{\phi}{a}{})}{b}{}))$\hfill f, h, PL
					\item $\LDiiPded(\decides{M}{\neg(\decides{M}{\phi}{a}{})}{b}{})\lequiv
								\neg(\decides{M}{(\decides{M}{\phi}{a}{})}{b}{})$\hfill Theorem~\ref{theorem:ProofCompilability}.1
					\item $\LDiiPded\neg(\decides{M}{\neg(\decides{M}{\phi}{a}{})}{b}{})\lequiv
								\decides{M}{(\decides{M}{\phi}{a}{})}{b}{}$\hfill j, PL
					\item $\LDiiPded\knows{b}{M}\limp 
						((\decides{M}{\phi}{a}{})\limp\decides{M}{(\decides{M}{\phi}{a}{})}{b}{})$\hfill i, k, PL
					\item $\LDiiPded\knows{b}{M}\limp
						((\decides{M}{(\decides{M}{\phi}{a}{})}{b}{})\lequiv\decides{M}{\phi}{a}{})$\hfill
							a, l, PL.
		\end{enumerate}
	\end{enumerate}

\subsection{Proof of Theorem~\ref{theorem:Adequacy}}
	\newcommand{\canrel}[3]{\mathrel{_{#1}\negthinspace\mathrm{C}_{#2}^{#3}}}
	\newcommand{\canVal}{\mathcal{V}_{\mathsf{C}}}

\subsubsection{Axiomatic soundness}\label{appendix:AxiomaticSoundness}	
\begin{definition}[Truth \& Validity \cite{ModalLogicSemanticPerspective}]\label{definition:TruthValidity}\  
	\begin{itemize}
	\item The formula $\phi\in\pFormulas$ is \emph{true} (or \emph{satisfied}) 
		in the model $(\aModalFrame,\mathcal{V})$ at the state $s\in\states$ 
			:iff $(\aModalFrame,\mathcal{V}), s\models\phi$.
	\item The formula $\phi$ is \emph{satisfiable} in the model $(\aModalFrame,\mathcal{V})$ 
			:iff there is $s\in\states$ such that 
				$(\aModalFrame,\mathcal{V}), s\models\phi$.
	\item The formula $\phi$ is \emph{globally true} (or \emph{globally satisfied}) 
		in the model $(\aModalFrame,\mathcal{V})$, 
			written $(\aModalFrame,\mathcal{V})\models\phi$, :iff 
				for all $s\in\states$, $(\aModalFrame,\mathcal{V}),s\models\phi$.
	\item The formula $\phi$ is \emph{satisfiable}  
			:iff there is a model $(\aModalFrame,\mathcal{V})$ and a state $s\in\states$ such that 
				$(\aModalFrame,\mathcal{V}),s\models\phi$.	
	\item The formula $\phi$ is \emph{valid}, written $\models\phi$, :iff 
			for all models $(\aModalFrame,\mathcal{V})$, $(\aModalFrame,\mathcal{V})\models\phi$.
	\end{itemize}
\end{definition}

\begin{proposition}[Admissibility of LDiiP-specific axioms and rules]\label{proposition:AxiomAndRuleAdmissibility}\ 
	\begin{enumerate}
		\item $\models\decides{M}{\knows{a}{M}}{a}{}$
		\item $\models(\decides{M}{(\phi\limp\phi')}{a}{})\limp
				((\decides{M}{\phi}{a}{})\limp\decides{M}{\phi'}{a}{})$
		\item $\models(\decides{M}{\phi}{a}{})\limp(\knows{a}{M}\limp\phi)$
		\item $\models\neg(\decides{M}{\false}{a}{})$
		\item $\models(\decides{M}{\phi}{a}{})\lor\decides{M}{\neg\phi}{a}{}$
		\item If $\models\phi$ then $\models\decides{M}{\phi}{a}{}$
	\end{enumerate}
\end{proposition}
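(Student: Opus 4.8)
The plan is to verify all six items by unfolding the satisfaction clause for $\decides{M}{\cdot}{a}{}$ from Table~\ref{table:SatisfactionRelation} and invoking, one at a time, the four properties stipulated by the semantic interface in Definition~\ref{definition:KripkeModel} (seriality/totality, determinism/functionality, conditional reflexivity, epistemic image), together with the predefinition $\mathcal{V}(\knows{a}{M})=\setst{s\in\states}{M\in\clo{a}{s}(\emptyset)}$. Fixing an arbitrary model $(\aModalFrame,\mathcal{V})$ and state $s$, four of the items are essentially bookkeeping. For item~1, I would take any $s'$ with $s\access{M}{a}{}s'$ and apply epistemic image to get $M\in\clo{a}{s'}(\emptyset)$, i.e.\ $s'\models\knows{a}{M}$ by the predefinition of $\mathcal{V}$, so $s\models\decides{M}{\knows{a}{M}}{a}{}$. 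Item~2 (K) and item~6 (necessitation) are the standard normal-modal-logic arguments that hold for any accessibility relation and need no special property. For item~3 I would assume $s\models\decides{M}{\phi}{a}{}$ and $s\models\knows{a}{M}$; the latter unfolds to $M\in\clo{a}{s}(\emptyset)$, which by conditional reflexivity gives $s\access{M}{a}{}s$, whence the truth clause yields $s\models\phi$.

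The two genuinely LDiiP-specific items are 4 and 5. For item~4 (proof consistency) I would first establish the auxiliary validity $\models\true$, recalling that $\true\defeq\decides{a}{\knows{a}{a}}{a}{}$: for any $s'$ with $s\access{a}{a}{}s'$, epistemic image instantiated at the message $a$ gives $a\in\clo{a}{s'}(\emptyset)$, i.e.\ $s'\models\knows{a}{a}$, so $\true$ holds at every state of every model, and therefore $\false\defeq\neg\true$ is satisfied nowhere. Then seriality/totality supplies some $s'$ with $s\access{M}{a}{}s'$, and were $s\models\decides{M}{\false}{a}{}$ we would get the impossible $s'\models\false$; hence $s\models\neg(\decides{M}{\false}{a}{})$. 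The only subtlety here is that $\false$ is a \emph{defined} symbol, so its unsatisfiability must be argued rather than assumed.

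The main obstacle — the step that truly distinguishes LDiiP from an ordinary normal modal logic — is item~5, negation completeness, which is exactly where determinism/functionality enters. I would argue by contradiction: if neither disjunct held at $s$, then $s\not\models\decides{M}{\phi}{a}{}$ would furnish some $s'$ with $s\access{M}{a}{}s'$ and $s'\models\neg\phi$, while $s\not\models\decides{M}{\neg\phi}{a}{}$ would furnish some $s''$ with $s\access{M}{a}{}s''$ and $s''\models\phi$; but functionality forces $s'=s''$, yielding the contradiction $s'\models\phi$ and $s'\models\neg\phi$. The delicate point is that this is where the functionality of $\access{M}{a}{}$ — realised concretely by the Skolem function $\sigma_{a}^{M}$ of Definition~\ref{definition:SemanticIngredients} — does the real work: it collapses the possibly many successors to one, so that the classical bivalence at that single successor lifts to the internalised negation completeness, whereas all the remaining items go through with nothing more than the satisfaction clause and the other three interface properties.
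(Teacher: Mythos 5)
Your proof is correct and follows essentially the same route as the paper, whose own (one-sentence) proof consists of exactly the mapping you use: epistemic image for item~1, standard Kripke-semantics reasoning for items~2 and~6, conditional reflexivity for item~3, seriality/totality for item~4, and determinism/functionality for item~5. Your only addition is the careful treatment of $\false$ as a \emph{defined} symbol in item~4 (establishing $\models\true$ via the epistemic-image property, i.e.\ as an instance of item~1), a detail the paper's terse proof leaves implicit.
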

\begin{proof}
	1 follows directly from the epistemic-image property of $\access{M}{a}{}$;
	2 and 6 hold by the fact that LiiP has a standard Kripke-semantics;
	3 follows directly from the conditional reflexivity of $\access{M}{a}{}$, and
	4 and 5 from the seriality/totality and the determinism/functionality of $\access{M}{a}{}$, respectively.
\end{proof}

\subsubsection{Semantic completeness}\label{appendix:LDiiPCompleteness}	
	For all $\phi\in\pFormulas$, 
		if $\models\phi$ then $\LDiiPded\phi$.
\begin{proof}
	Let
		\begin{itemize}
			\item $\mathcal{W}$ designate the set of all maximally LDiiP-consistent sets\footnote{*
				A set $W$ of LDiiP-formulas is maximally LDiiP-consistent :iff 
					$W$ is LDiiP-consistent and 
					$W$ has no proper superset that is LDiiP-consistent.
				A set $W$ of LDiiP-formulas is LDiiP-consistent :iff 
					$W$ is not LDiiP-inconsistent.
				A set $W$ of LDiiP-formulas is LDiiP-inconsistent :iff 
					there is a finite $W'\subseteq W$ such that $((\bigwedge W')\limp\false)\in\LDiiP$.
				Any LDiiP-consistent set can be extended to a maximally LDiiP-consistent set by means of  
					the Lindenbaum Construction \cite[Page~90]{ModalProofTheory}.
				A set is maximally LDiiP-consistent if and only if 
				the set of logical-equivalence classes of the set is an ultrafilter of
				the Lindenbaum-Tarski algebra of LDiiP \cite[Page~351]{AlgebrasAndCoalgebras}.
				The canonical frame is isomorphic to the ultrafilter frame of that Lindenbaum-Tarski algebra 
					\cite[Page~352]{AlgebrasAndCoalgebras}.}
			\item for all $w,w'\in\mathcal{W}$,
				$w\canrel{M}{a}{}w'$ :iff $\setst{\phi\in\pFormulas}{\decides{M}{\phi}{a}{}\in w}\subseteq w'$
			\item for all $w\in\mathcal{W}$, $w\in\canVal(P)$ :iff $P\in w$.
		\end{itemize}
	Then \newcommand{\canModel}{\mathfrak{M}_{\mathsf{C}}}
		$\canModel\defeq
			(\mathcal{W},\set{\canrel{M}{a}{}}_{M\in\messages,a\in\agents},\canVal)$
		designates the \emph{canonical model} for LDiiP.
	Following Fitting \cite[Section~2.2]{ModalProofTheory}, 
	the following useful property of $\canModel$,  
		$$\boxed{$\text{for all $\phi\in\pFormulas$ and $w\in\mathcal{W}$,
			$\phi\in w$ if and only if $\canModel,w\models\phi$,}$}$$
	the so-called \emph{Truth Lemma}, can be proved by induction on the structure of $\phi$:
	\begin{enumerate}
		\item Base case ($\phi\defeq P$ for $P\in\mathcal{P}$). 
			For all $w\in\mathcal{W}$,
				$P\in w$ if and only if $\canModel,w\models P$, 
					by definition of $\canVal$.
		\item Inductive step ($\phi\defeq \neg\phi'$ for $\phi'\in\pFormulas$).
			Suppose that
				for all $w\in\mathcal{W}$,
					$\phi'\in w$ if and only if $\canModel,w\models\phi'$.
			Further let
				$w\in\mathcal{W}$.
			Then, 
				$\neg\phi'\in w$ if and only if $\phi'\not\in w$ --- $w$ is consistent ---
				if and only if $\canModel,w\not\models\phi'$ --- by the induction hypothesis ---
				if and only if $\canModel,w\models\neg\phi'$.
		\item Inductive step ($\phi\defeq \phi'\land\phi''$ for $\phi',\phi''\in\pFormulas$).
			Suppose that 
				for all $w\in\mathcal{W}$,
					$\phi'\in w$ if and only if $\canModel,w\models\phi'$, and that
				for all $w\in\mathcal{W}$,
					$\phi''\in w$ if and only if $\canModel,w\models\phi''$.
			Further let
				$w\in\mathcal{W}$.
			Then, 
				$\phi'\land\phi''\in w$ if and only if 
					($\phi'\in w$ and $\phi''\in w$), because $w$ is maximal.
			Now suppose that
				$\phi'\in w$ and $\phi''\in w$.
			Hence, 
				$\canModel,w\models\phi'$ and 
				$\canModel,w\models\phi''$, by the induction hypotheses, and 
			thus $\canModel,w\models\phi'\land\phi''$.
			Conversely, suppose that
				$\canModel,w\models\phi'\land\phi''$.
			Then,
				$\canModel,w\models\phi'$ and 
				$\canModel,w\models\phi''$.
			Hence,
				$\phi'\in w$ and $\phi''\in w$, by the induction hypotheses.
			Thus,
				($\phi'\in w$ and $\phi''\in w$) if and only if
				($\canModel,w\models\phi'$ and 
				$\canModel,w\models\phi''$).
			Whence
				$\phi'\land\phi''\in w$ if and only if
				($\canModel,w\models\phi'$ and 
				$\canModel,w\models\phi''$), by transitivity.
		\item Inductive step ($\phi\defeq\decides{M}{\phi'}{a}{}$ for 
				$M\in\messages$, $a\in\agents$, and $\phi'\in\pFormulas$).
			\begin{flushleft}
			\nn{4.1} for all $w\in\mathcal{W}$,
						$\phi'\in w$ if and only if $\canModel,w\models\phi'$\hfill ind.\ hyp.\\[\jot]
			\nn{4.2}\quad $w\in\mathcal{W}$\hfill hyp.\\[2\jot]
			\nn{4.3}\qquad $\decides{M}{\phi'}{a}{}\in w$\hfill hyp.\\[\jot]
			\nn{4.4}\qquad\quad	$w'\in\mathcal{W}$\hfill hyp.\\[\jot]
			\nn{4.5}\qquad\qquad $w\canrel{M}{a}{}w'$\hfill hyp.\\[\jot]
			\nn{4.6}\qquad\qquad $\setst{\phi''\in\pFormulas}{\decides{M}{\phi''}{a}{}\in w}\subseteq w'$\hfill 4.5\\[\jot]
			\nn{4.7}\qquad\qquad $\phi'\in\setst{\phi''\in\pFormulas}{\decides{M}{\phi''}{a}{}\in w}$\hfill 4.3, 4.6\\[\jot]
			\nn{4.8}\qquad\qquad $\phi'\in w'$\hfill 4.6, 4.7\\[\jot]
			\nn{4.9}\qquad\qquad $\canModel,w'\models\phi'$\hfill 4.1, 4.4, 4.8\\[\jot]
			\nn{4.10}\qquad\quad if $w\canrel{M}{a}{}w'$ then $\canModel,w'\models\phi'$\hfill 4.5--4.9\\[\jot]
			\nn{4.11}\qquad for all $w'\in\mathcal{W}$, 
					if $w\canrel{M}{a}{}w'$ 
					then $\canModel,w'\models\phi'$\hfill 4.4--4.10\\[\jot]
			\nn{4.12}\qquad $\canModel,w\models\decides{M}{\phi'}{a}{}$\hfill 4.11\\[2\jot]
			\nn{4.13}\qquad $\decides{M}{\phi'}{a}{}\not\in w$\hfill hyp.\\[\jot]
			\nn{4.14}\qquad\quad $\mathcal{F}=\setst{\phi''\in\pFormulas}{\decides{M}{\phi''}{a}{}\in w}\cup\set{\neg\phi'}$\hfill hyp.\\[\jot]
			\nn{4.15}\qquad\qquad $\mathcal{F}$ is LDiiP-inconsistent\hfill hyp.\\[\jot]
			\nn{4.16}\qquad\qquad there is $\set{\decides{M}{\phi_{1}}{a}{},\ldots,\decides{M}{\phi_{n}}{a}{}}\subseteq w$ such that\\
			\nn{}\qquad\qquad $\LDiiPded(\phi_{1}\land\ldots\land\phi_{n}\land\neg\phi')\limp\false$\hfill 4.14, 4.15\\[\jot]
			\nn{4.17}\qquad\qquad\quad $\set{\decides{M}{\phi_{1}}{a}{},\ldots,\decides{M}{\phi_{n}}{a}{}}\subseteq w$ and\\
			\nn{}\qquad\qquad\quad $\LDiiPded(\phi_{1}\land\ldots\land\phi_{n}\land\neg\phi')\limp\false$\hfill hyp.\\[\jot]
			\nn{4.18}\qquad\qquad\quad $\LDiiPded(\phi_{1}\land\ldots\land\phi_{n})\limp\phi'$\hfill 4.17\\[\jot]
			\nn{4.19}\qquad\qquad\quad $\LDiiPded(\decides{M}{(\phi_{1}\land\ldots\land\phi_{n})}{a}{})\limp\decides{M}{\phi'}{a}{}$\hfill 4.18, regularity\\[\jot]
			\nn{4.20}\qquad\qquad\quad $\LDiiPded((\decides{M}{\phi_{1}}{a}{})\land\ldots\land(\decides{M}{\phi_{n}}{a}{}))\limp\decides{M}{\phi'}{a}{}$\hfill 4.19\\[\jot]
			\nn{4.21}\qquad\qquad\quad $\decides{M}{\phi'}{a}{}\in w$\hfill 4.17, 4.20, $w$ is maximal\\[\jot]
			\nn{4.22}\qquad\qquad\quad false\hfill 4.13, 4.21\\[\jot]
			\nn{4.23}\qquad\qquad false\hfill 4.16, 4.17--4.22\\[\jot]
			\nn{4.24}\qquad\quad $\mathcal{F}$ is LDiiP-consistent\hfill 4.15--4.23\\[\jot]
			\nn{4.25}\qquad\quad there is $w'\supseteq\mathcal{F}$ \st $w'$ is maximally LDiiP-consistent\hfill 4.24\\[\jot]
			\nn{4.26}\qquad\qquad $\mathcal{F}\subseteq w'$ and $w'$ is maximally LDiiP-consistent\hfill hyp.\\[\jot]
			\nn{4.27}\qquad\qquad $\setst{\phi''\in\pFormulas}{\decides{M}{\phi''}{a}{}\in w}\subseteq\mathcal{F}$\hfill 4.14\\[\jot]
			\nn{4.28}\qquad\qquad $\setst{\phi''\in\pFormulas}{\decides{M}{\phi''}{a}{}\in w}\subseteq w'$\hfill 4.26, 4.27\\[\jot]
			\nn{4.29}\qquad\qquad $w\canrel{M}{a}{}w'$\hfill 4.28\\[\jot]
			\nn{4.30}\qquad\qquad $w'\in\mathcal{W}$\hfill 4.26\\[\jot]
			\nn{4.31}\qquad\qquad $\neg\phi'\in\mathcal{F}$\hfill 4.14\\[\jot]
			\nn{4.32}\qquad\qquad $\neg\phi'\in w'$\hfill 4.26, 4.31\\[\jot]
			\nn{4.33}\qquad\qquad $\phi'\not\in w'$\hfill 4.26 ($w'$ is LDiiP-consistent), 4.32\\[\jot]
			\nn{4.34}\qquad\qquad $\canModel,w'\not\models\phi'$\hfill 4.1, 4.33\\[\jot]
			\nn{4.35}\qquad\qquad there is $w'\in\mathcal{W}$ \st 
				$w\canrel{M}{a}{}w'$ and $\canModel,w'\not\models\phi'$\hfill 4.29, 4.34\\[\jot]
			\nn{4.36}\qquad\qquad $\canModel,w\not\models\decides{M}{\phi'}{a}{}$\hfill 4.35\\[\jot]
			\nn{4.37}\qquad\quad $\canModel,w\not\models\decides{M}{\phi'}{a}{}$\hfill 4.25, 4.26--4.36\\[\jot]
			\nn{4.38}\qquad $\canModel,w\not\models\decides{M}{\phi'}{a}{}$\hfill 4.14--4.37\\[2\jot]
			\nn{4.39}\quad $\decides{M}{\phi'}{a}{}\in w$ if and only if $\canModel,w\models\decides{M}{\phi'}{a}{}$\hfill 4.3--4.12, 4.13--4.38\\[\jot]
			\nn{4.40} for all $w\in\mathcal{W}$,
				$\decides{M}{\phi'}{a}{}\in w$ if and only if $\canModel,w\models\decides{M}{\phi'}{a}{}$\hfill 4.2--4.39\\[\jot]
		\end{flushleft}
	\end{enumerate}

	With the Truth Lemma we can now prove that 
			for all $\phi\in\pFormulas$,
				if $\not\LDiiPded\phi$ then $\not\models\phi$.
	Let 
		$\phi\in\pFormulas$, and 
	suppose that 
		$\not\LDiiPded\phi$.
	Thus, 
		$\set{\neg\phi}$ 
			is LDiiP-consistent, and 
			can be extended to a maximally LDiiP-consistent set $w$, \ie
				$\neg\phi\in w\in\mathcal{W}$.
	Hence 
		$\canModel,w\models\neg\phi$, by the Truth Lemma.
	Thus: 
		$\canModel,w\not\models\phi$,
		$\canModel\not\models\phi$, and
		$\not\models\phi$.
	That is,
			$\canModel$ is a 
				\emph{universal} (for \emph{all} $\phi\in\pFormulas$) 
				\emph{counter-model} (if $\phi$ is a non-theorem then $\canModel$ falsifies $\phi$).
	
	We are left to prove that 
		$\canModel$ is also an \emph{LDiiP-model}. 
	So let us instantiate our data mining operator $\clo{a}{}$ (\cf Page~\pageref{page:DataMining}) on $\mathcal{W}$  
		by letting for all $w\in\mathcal{W}$
			$$\msgs{a}(w)\defeq\setst{M}{\knows{a}{M}\in w},$$ and let us prove that:
					\begin{enumerate}
						\item there is $w'\in\mathcal{W}$ such that $w\canrel{M}{a}{}w'$
						\item if $w\canrel{M}{a}{}w'$ and 
								$w\canrel{M}{a}{}w''$ then $w'=w''$
						\item if $M\in\clo{a}{w}(\emptyset)$ then 
								$w\canrel{M}{a}{}w$
						\item if $w\canrel{M}{a}{}w'$ then $M\in\clo{a}{w'}(\emptyset)$.
					\end{enumerate} 

	For (1),
		let $w\in\mathcal{W}$ and $\phi\in\pFormulas$, and
		suppose that $\decides{M}{\phi}{a}{}\in w$.
	For the sake of deriving the contrary, 
		further suppose that $\phi\not\in w$.
	Hence $\neg\phi\in w$ because $w$ is maximal, and 
	thus $\phi\limp\false\in w$. 
	Hence $(\decides{M}{\phi}{a}{})\limp\decides{M}{\false}{a}{}\in w$ by regularity.
	Hence $\decides{M}{\false}{a}{}\in w$ by the first supposition and \emph{modus ponens}.
	Hence $\neg(\decides{M}{\false}{a}{})\not\in w$ because $w$ is consistent.
	Yet since $w$ is maximal,
		$\neg(\decides{M}{\false}{a}{})\in w$ (proof consistency).
	Contradiction.
	Hence $w$ is actually a $w'$ such that $\phi\in w'$.

	For (2), let us first prove the following, so-called Reflection Lemma:
		$$\text{$\decides{M}{\phi}{a}{}\not\in w$ if and only if 
			$\decides{M}{\neg\phi}{a}{}\in w$.}$$
	So suppose that 
	\begin{itemize}
		\item $\decides{M}{\phi}{a}{}\not\in w$. 
				Hence $\neg(\decides{M}{\phi}{a}{})\in w$ because $w$ is maximal.
				Since $w$ is maximal, 
					$\neg(\decides{M}{\phi}{a}{})\limp\decides{M}{\neg\phi}{a}{}\in w$ (negation completeness).
				Hence $\decides{M}{\neg\phi}{a}{}\in w$ by \emph{modus ponens}.
		\item $\decides{M}{\neg\phi}{a}{}\in w$. 
				Since $w$ is maximal, 
					$(\decides{M}{\neg\phi}{a}{})\limp\neg(\decides{M}{\neg\neg\phi}{a}{})\in w$ (proof consistency).
				Hence $\neg(\decides{M}{\neg\neg\phi}{a}{})\in w$ by \emph{modus ponens}.
				Since $w$ is maximal, 
					$\phi\limp\neg\neg\phi\in w$.
				Hence $(\decides{M}{\phi}{a}{})\limp\decides{M}{\neg\neg\phi}{a}{}\in w$ by regularity.
				Hence $\neg(\decides{M}{\neg\neg\phi}{a}{})\limp\neg(\decides{M}{\phi}{a}{})\in w$ by contraposition.
				Hence $\neg(\decides{M}{\phi}{a}{})\in w$ by \emph{modus ponens}.
				Hence $\decides{M}{\phi}{a}{}\not\in w$ because $w$ is consistent.
	\end{itemize}
	Now for (2), 
		let $w,w',w''\in\mathcal{W}$ and 
		suppose that $w\canrel{M}{a}{}w'$ and $w\canrel{M}{a}{}w''$.
	That is,	
		(for all $\phi\in\pFormulas$, 
			if $\decides{M}{\phi}{a}{}\in w$
			then $\phi\in w'$) and 
		(for all $\phi\in\pFormulas$, 
			if $\decides{M}{\phi}{a}{}\in w$
			then $\phi\in w''$).
	Now let $\phi\in\pFormulas$ and suppose that 
	\begin{itemize}
		\item $\phi\in w'$. 
			Hence $\neg\phi\not\in w'$ because $w$ is consistent.
			Hence $\decides{M}{\neg\phi}{a}{}\not\in w$ by 
				particularisation of the first supposition with $\neg\phi$ and \emph{modus tollens}.
			Hence $\decides{M}{\phi}{a}{}\in w$ by the Reflection Lemma.
			Hence $\phi\in w''$ by the second supposition and \emph{modus ponens}.
		\item $\phi\in w''$. Hence $\phi\in w'$---symmetrically.
	\end{itemize}

	For (3), 
		let $w\in\mathcal{W}$ and 
		suppose that 
			$M\in\clo{a}{w}(\emptyset)$.
	Hence $\knows{a}{M}\in w$ due to the maximality of $w$.
	Further suppose that $\decides{M}{\phi}{a}{}\in w$.
	Since $w$ is maximal,
		$$\text{$(\decides{M}{\phi}{a}{})\limp(\knows{a}{M}\limp\phi)\in w$\quad(epistemic truthfulness).}$$
	Hence, 
		$\knows{a}{M}\limp\phi\in w$, and
		$\phi\in w$, by consecutive  \emph{modus ponens.}
	
	For (4),
		let $w,w'\in\mathcal{W}$ and 
		suppose that $w\canrel{M}{a}{}w'$.
	That is,	
		for all $\phi\in\pFormulas$, 
			if $\decides{M}{\phi}{a}{}\in w$
			then $\phi\in w'$.
	Since $w$ is maximal,
		$$\text{$\decides{M}{\knows{a}{M}}{a}{}\in w$\quad(self-knowledge).}$$
	Hence 
		$\knows{a}{M}\in w'$ by particularisation of the supposition, and 
	thus 
		$M\in\clo{a}{w'}(\emptyset)$ by the definition of $\clo{a}{w'}$.

\end{proof}

\end{document}